\theoremstyle{plain}
\newtheorem*{claim*}{Claim}
\newtheorem{thm}{Theorem}[section]
\newtheorem{corollary}[thm]{Corollary}
\newtheorem{lemma}[thm]{Lemma}
\newtheorem{prop}[thm]{Proposition}
\theoremstyle{definition}
\newtheorem{defn}[thm]{Definition}
\newtheorem{ex}[thm]{Example}
\newtheorem{remark}[thm]{Remark}
\newtheorem{prob}[thm]{Open Problem}
\begin{document}
\title{\large{Separability conditions in acts over monoids}}
\author{Craig Miller}
\address{Department of Mathematics, University of York, UK, YO10 5DD}
\email{craig.miller@york.ac.uk}
\maketitle

\begin{abstract}
We discuss residual finiteness and several related separability conditions for the class of monoid acts, namely weak subact separability, strong subact separability and complete separability.  For each of these four separability conditions, we investigate which monoids have the property that all their (finitely generated) acts satisfy the condition.  In particular, we prove that: all acts over a finite monoid are completely separable (and hence satisfy the other three separability conditions); all finitely generated acts over a finitely generated commutative monoid are residually finite and strongly subact separable (and hence weakly subact separable); all acts over a commutative idempotent monoid are residually finite and strongly subact separable; and all acts over a Clifford monoid are strongly subact separable.
\end{abstract}
\vspace{0.5em}
\textit{Keywords}: monoid, monoid act, separability condition, residual finiteness\par
\textit{2020 Mathematics Subject Classification}: 20M30, 20M14, 20M17

\section{\large{Introduction}\nopunct}
\label{sec:intro}

The notion of separability concerns separating an element from a subset in a finite homomorphic image.  The most well-known concept regarding separability is residual finiteness.  An algebra $A$ is said to be {\em residually finite} if for any pair of distinct elements $a, b\in A$ there exist a finite algebra $C$ and a homomorphism $\theta : A\to C$ such that $a\theta\neq b\theta.$  Residual finiteness has been studied extensively within many areas of algebra and has proved to be a powerful tool.  For example, Evans proved that any finitely presented, residually finite algebra has a solvable word problem \cite{Evans}.\par
Residual finiteness is an instance of what we call a {\em separability condition}.  Separability conditions arise from the idea of separating elements from subsets of a particular type (e.g.\ singletons, subalgebras, etc.).
More precisely, let $\mathcal{K}$ be a class of algebras, let $A\in\mathcal{K},$ and let $\mathcal{S}$ be a collection of non-empty subsets of $A.$  We say that $A$ satisfies the {\em separability condition with respect to $\mathcal{S}$} if for any $X\in\mathcal{S}$ and any $a\in A\!\setminus\!X,$ there exist a finite algebra $C\in\mathcal{K}$ and a homomorphism $\theta : A\to C$ such that $a\theta\notin X\theta.$
In this case we say that $a$ can be {\em separated} from $X$ and that $\theta$ {\em separates} $a$ from $X.$  Of course, this is equivalent to there existing a finite index congruence $\rho$ on $A$ such that $(a, x)\notin\rho$ for all $x\in X.$  (The {\em index} of an equivalence relation is the number of classes.)  Likewise, we say that $\rho$ {\em separates} $a$ from $X.$\par
Clearly residual finiteness may be viewed as the separability condition with respect to the collection of all singleton subsets.  If the class $\mathcal{K}$ is a variety, then being residually finite is equivalent to satisfying the separability condition with respect to the collection of all {\em finite} subsets.  
Along with residual finiteness, we shall consider several other separability conditions.  These conditions have been studied in a variety of areas of algebra and under many different names.  The following names, introduced in \cite{Miller}, are designed to describe the conditions and highlight the relationships between them.
An algebra $A$ is said to be:
\begin{itemize}[leftmargin=*]
\item {\em weakly subalgebra separable} if it satisfies the separability condition with respect to the collection of all finitely generated subalgebras;
\item {\em strongly subalgebra separable} if it satisfies the separability condition with respect to the collection of all subalgebras;
\item {\em completely separable} if it satisfies the separability condition with respect to the collection of all non-empty subsets.
\end{itemize}
When we are working with a particular class of algebraic structures, we replace the word `subalgebra' in `weakly/strongly subalgebra separable' with the specific type of subalgebra.  For instance, for the class of groups we use the term weakly {\em subgroup} separable.\par
In group theory, the above separability conditions have received considerable attention (except complete separability, which is equivalent to being finite for groups).  We refer the reader to \cite{Magnus} for a survey of residually finite groups.  For weak subgroup separability and strong subgroup separability, one may consult \cite{Robinson} and the references therein.
In semigroup theory, residual finiteness has been studied in, for instance, \cite{Carlisle, Golubov1, Golubov2, Golubov3, Gray1, Gray2, Lallement, Lesohin}, weak subsemigroup separability in \cite{Miller, Gerry}, strong subsemigroup separability in \cite{Golubov1, Golubov2, Golubov3, Lesohin, Miller, Gerry}, and complete separability in \cite{Golubov1, Miller, Gerry}.
All of the aformentioned separability conditions have also been considered for monounary algebras in \cite{Witt}.
Residual finiteness has been studied for many other algebraic structures, including rings \cite{Faith}, modules \cite{Vara}, Lie algebras \cite{Bahturin, Premet, Zaicev} and lattices \cite{Adams}.  Most pertinent for this article, Kozhukhov investigated the property that all acts over a semigroup are residually finite in \cite{Kozhukhov1, Kozhukhov2, Kozhukhov3}.\par
The purpose of this paper is to study, for the class of monoid acts, the properties of residual finiteness, weak subact separability, strong subact separability and complete separability, and to investigate certain related finiteness conditions for the class of monoids.  The paper is structured as follows.  In Section \ref{sec:prelim}, we establish some basic definitions and facts about monoid acts and the separability conditions.  In Section \ref{sec:cyclic}, we first consider the separability conditions for cyclic acts, and we then discuss how these conditions relate to one another and whether they are preserved under disjoint unions.  The purpose of Section \ref{sec:EF} is to present equivalent formulations of each of the four separability conditions.  In Sections \ref{sec:FC} and \ref{sec:regular} we study certain natural finiteness conditions relating to the separability conditions.  Specifically, given any of the four separability conditions $\mathcal{C},$ we investigate which monoids have the property that all their acts satisfy $\mathcal{C},$ and which monoids have the property that all their {\em finitely generated} acts satisfy $\mathcal{C}.$  We find some equivalent characterisations of these finiteness conditions in Section \ref{sec:FC}.  Section \ref{sec:regular} is concerned with certain special classes of monoids, namely commutative monoids, groups, completely simple semigroups with identity adjoined, and Clifford monoids.

\section{\large{Preliminaries}\nopunct}
\label{sec:prelim}

\subsection{Monoid acts\nopunct}
~\par\vspace{0.5em}

Let $M$ be a monoid with identity 1.  A {\em right $M$-act} is a non-empty set $A$ together with a map $$A\times M\to A, (a, m)\mapsto am$$
such that $a(mn)=(am)n$ and $a1=a$ for all $a\in A$ and $m, n\in M.$
Left $M$-acts are defined analogously.  However, we will deal only with {\em right} $M$-acts and will therefore just speak of {\em $M$-acts.}\par
Given a non-empty set $A,$ we denote by $T_A$ the {\em full transformation monoid} on $A$; that is, the set of all transformations of $A$ under composition of mappings.  $M$-acts are in one-to-one correspondence with representations of the monoid $M$ by transformations of sets:

\begin{prop}[{\cite[Proof of Proposition 1.4.4]{Kilp}}]
\label{rep}
Let $M$ be a monoid.  Given an $M$-act $A,$ we obtain a monoid homomorphism $\theta : M\to T_A$ by defining $a(m\theta)=am$ for all $a\in A$ and $m\in M.$  Conversely, given a monoid homomorphism $\theta : M\to T_A,$ the set $A$ is turned into an $M$-act by defining $am=a(m\theta)$ for all $a\in A$ and $m\in M.$ 
\end{prop}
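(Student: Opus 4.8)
The plan is to verify both directions by direct computation, with the only genuine care needed being the convention for composition in $T_A$. Since we are dealing with \emph{right} acts and write the action of $m$ on the right of $a$, I will take composition in $T_A$ to be read left-to-right, so that $a\big((m\theta)(n\theta)\big) = \big(a(m\theta)\big)(n\theta)$; fixing this convention is the one point where a silent error could creep in, and everything else is routine.

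For the forward direction, given an $M$-act $A$ I would first check that each $m\theta$ is a genuine transformation of $A$: the assignment $a \mapsto am$ is a well-defined map $A \to A$ since $am \in A$ by the definition of the act. It then remains to show that $\theta$ is a monoid homomorphism. For multiplicativity I would compute, for arbitrary $a \in A$ and $m, n \in M$,
\[
a\big((mn)\theta\big) = a(mn) = (am)n = \big(a(m\theta)\big)(n\theta) = a\big((m\theta)(n\theta)\big),
\]
using the associativity axiom $a(mn) = (am)n$ at the second step; since $a$ is arbitrary this gives $(mn)\theta = (m\theta)(n\theta)$. For preservation of the identity I would note that $a(1\theta) = a1 = a$ for all $a$, by the axiom $a1 = a$, so that $1\theta = \mathrm{id}_A$, the identity of $T_A$.

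For the converse, given a monoid homomorphism $\theta : M \to T_A$ I would define $am := a(m\theta)$ and verify the two act axioms. Associativity follows by reversing the above computation: $a(mn) = a\big((mn)\theta\big) = a\big((m\theta)(n\theta)\big) = \big(a(m\theta)\big)(n\theta) = (am)n$, where the homomorphism property of $\theta$ is invoked at the second step. The identity axiom follows since $a1 = a(1\theta) = a\,\mathrm{id}_A = a$, as $\theta$ sends $1$ to the identity of $T_A$. I expect no real obstacle here: both directions are short, and the only thing to keep straight throughout is that the multiplication in $T_A$ must compose the transformations in the order dictated by the right-action convention.
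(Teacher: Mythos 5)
Your proof is correct: both directions are verified exactly as the cited source (Kilp, Knauer and Mikhalev) does, and your explicit attention to the left-to-right composition convention in $T_A$ is precisely the one point where care is needed for right acts. The paper itself gives no proof, deferring to \cite{Kilp}, and your argument is the routine verification that citation stands for, so there is nothing to add.
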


Notice that for any monoid $M,$ the set $M$ itself is an $M$-act via right multiplication.  For clarity, we denote this $M$-act by $\textbf{M}.$\par
A non-empty subset $B$ of an $M$-act $A$ is a {\em subact} of $A$ if $bm\in B$ for all $b\in B$ and $m\in M.$
Note that the right ideals of $M$ are precisely the subacts of $\textbf{M}.$\par
An element $a$ of an $M$-act $A$ is called a {\em zero} if $am=a$ for all $m\in M.$
Thus, if $a\in A$ is a zero, the singleton $\{a\}$ is a subact of $A.$
We note that monoid acts can have more than one zero.\par
For two $M$-acts $A$ and $B,$ a map $\theta : A\to B$ is an {\em $M$-homomorphism} if $(am)\theta=(a\theta)m$ for all $a\in A$ and $m\in M$. 
If $\theta$ is also bijective, then it is an {\em $M$-isomorphism}, and we write $A\cong B.$\par
An equivalence relation $\rho$ on $A$ is an ({\em $M$-act}) {\em congruence} on $A$ if $(a, b)\in\rho$ implies $(am, bm)\in\rho$ for all $a, b\in A$ and $m\in M.$
For a congruence $\rho$ on an $M$-act $A,$ the quotient set $$A/\rho=\{[a]_{\rho} : a\in A\}$$
becomes an $M$-act by defining $[a]_{\rho}m=[am]_{\rho}$ for all $[a]_{\rho}\in A/\rho$ and $m\in M.$
We call $A/\rho$ the {\em quotient} of $A$ by $\rho.$\par
Given an $M$-homomorphism $\theta : A\to B,$ we define a relation $\text{ker }\theta$ on $A$ by $$(a, b)\in\text{ker }\theta\iff a\theta=b\theta.$$
The relation $\text{ker }\theta$ is called the {\em kernel} of $\theta$ and is a congruence on $A.$\par
We have the following First Isomorphism Theorem for acts.
\begin{thm}[{\cite[Theorem 1.4.21]{Kilp}}]
Let $M$ be a monoid, let $A$ and $B$ be two $M$-acts, and let $\theta : A\to B$ be an $M$-homomorphism.  Then $A/\emph{ker }\theta\cong\emph{Im }\theta.$
\end{thm}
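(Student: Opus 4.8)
The plan is to exhibit an explicit map from $A/\text{ker }\theta$ to $\text{Im }\theta$ and verify that it is an $M$-isomorphism. Throughout, write $\rho=\text{ker }\theta$, so that $A/\rho$ carries the quotient $M$-act structure described above, and note that $\text{Im }\theta=\{a\theta : a\in A\}$ is a subact of $B$ (since $(a\theta)m=(am)\theta\in\text{Im }\theta$) and is therefore an $M$-act in its own right, a legitimate codomain.

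First I would define $\bar\theta : A/\rho\to\text{Im }\theta$ on representatives by $[a]_\rho\bar\theta=a\theta$. Because this prescription depends a priori on the chosen representative $a$, the first and most important thing to check is that $\bar\theta$ is well defined, and this is exactly the point at which the choice $\rho=\text{ker }\theta$ does the work: if $[a]_\rho=[b]_\rho$ then $(a, b)\in\rho$, which by the definition of the kernel means precisely $a\theta=b\theta$, so the value $[a]_\rho\bar\theta$ is independent of the representative. Reading the same chain of implications in reverse, $a\theta=b\theta$ forces $(a, b)\in\rho$ and hence $[a]_\rho=[b]_\rho$, which gives injectivity of $\bar\theta$; thus well-definedness and injectivity are really the two directions of a single equivalence. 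Surjectivity onto $\text{Im }\theta$ is immediate, since every element of $\text{Im }\theta$ has the form $a\theta=[a]_\rho\bar\theta$ for some $a\in A$.

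It then remains to check that $\bar\theta$ is an $M$-homomorphism. Using the definition of the quotient action together with the fact that $\theta$ is itself an $M$-homomorphism, I would compute
\[
([a]_\rho m)\bar\theta=[am]_\rho\bar\theta=(am)\theta=(a\theta)m=([a]_\rho\bar\theta)m
\]
for all $a\in A$ and $m\in M$. Combining the previous paragraph with this calculation shows that $\bar\theta$ is a bijective $M$-homomorphism, that is, an $M$-isomorphism, whence $A/\text{ker }\theta\cong\text{Im }\theta$.

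There is no serious obstacle here: the statement is a routine adaptation of the First Isomorphism Theorem familiar from other varieties of algebra, and the only genuinely substantive point is the observation that defining $\rho$ as the kernel makes well-definedness and injectivity coincide. The single preliminary worth confirming, noted in the first paragraph, is that $\text{Im }\theta$ is indeed a subact, so that the target of $\bar\theta$ is a bona fide $M$-act.
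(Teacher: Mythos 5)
Your proof is correct and is the standard argument: the paper itself does not prove this statement (it cites it from Kilp, Knauer and Mikhalev's book), and your construction of the induced map $[a]_{\rho}\mapsto a\theta$, with well-definedness and injectivity as the two directions of the kernel equivalence, is exactly the canonical proof of the First Isomorphism Theorem for acts. Nothing is missing; the checks that $\text{Im }\theta$ is a subact and that $\bar\theta$ respects the quotient action are precisely the points that need verifying.
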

Given an $M$-act $A$ and a subact $B$ of $A,$ we define the {\em Rees congruence} $\rho_B$ on $A$ by 
$$(a, b)\in\rho_B\iff a=b\text{ or }a, b\in B.$$
We denote the quotient act $A/{\rho_B}$ by $A/B$ and call it the {\em Rees quotient} of $A$ by $B.$
We will usually identify the $\rho_B$-class $\{a\}\in A/B$ with $a$ for each $a\in A\!\setminus\!B,$ and denote the $\rho_B$-class $B\in A/B$ by $0_B$.\par
A subset $U$ of an $M$-act $A$ is said to be a {\em generating set} for $A$ if for any $a\in A$ there exist $u\in U$ and $m\in M$ such that $a=um.$
We write $A=\langle U\rangle$ if $U$ is a generating set for $A.$
An $M$-act $A$ is said to be {\em finitely generated} (resp.\ {\em cyclic}) if it has a finite (resp.\ one element) generating set.
There is a one-to-one correspondence between the set of cyclic $M$-acts and the set of right congruences on $M.$

\begin{prop}[{\cite[Proposition 1.5.17]{Kilp}}]
\label{cyclic act, right congruence}
Let $M$ be a monoid.  An $M$-act $A$ is cyclic if and only if there exists a right congruence $\rho$ on $M$ such that $A\cong\textbf{M}/\rho.$
\end{prop}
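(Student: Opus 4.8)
The plan is to use the First Isomorphism Theorem for acts stated above, treating a right congruence on $M$ as nothing other than an $M$-act congruence on $\textbf{M}.$ I would prove the two implications separately.

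The backward implication is the easier half. Given any right congruence $\rho$ on $M,$ I would show directly that the quotient act $\textbf{M}/\rho$ is generated by the single element $[1]_{\rho},$ since for any $[m]_{\rho}\in\textbf{M}/\rho$ we have $[1]_{\rho}m=[1\cdot m]_{\rho}=[m]_{\rho}.$ Thus $\textbf{M}/\rho$ is cyclic, and as cyclicity is clearly preserved under $M$-isomorphism, any act $A\cong\textbf{M}/\rho$ is cyclic as well.

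For the forward implication, suppose $A=\langle a\rangle$ for some $a\in A.$ I would define a map $\theta : \textbf{M}\to A$ by $m\theta=am.$ First I would check that $\theta$ is an $M$-homomorphism, which is immediate from the act axiom: $(mn)\theta=a(mn)=(am)n=(m\theta)n.$ Next, since $\{a\}$ generates $A,$ every element of $A$ has the form $am=m\theta$ for some $m\in M,$ so $\theta$ is surjective and $\text{Im }\theta=A.$ Setting $\rho=\text{ker }\theta$ and applying the First Isomorphism Theorem then yields $\textbf{M}/\rho=\textbf{M}/\text{ker }\theta\cong\text{Im }\theta=A,$ as required.

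The argument is essentially routine; the only point demanding care is the observation that $\text{ker }\theta,$ being an $M$-act congruence on $\textbf{M},$ is by definition exactly a right congruence on $M.$ This identification is precisely what allows the First Isomorphism Theorem for acts to be translated into the desired statement phrased in terms of right congruences on the monoid $M.$
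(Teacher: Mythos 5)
Your proof is correct: the paper itself gives no proof of this proposition, citing it directly from \cite[Proposition 1.5.17]{Kilp}, and your argument (the evaluation map $m\mapsto am$ is a surjective $M$-homomorphism, its kernel is a right congruence, and the First Isomorphism Theorem for acts finishes the job) is exactly the standard proof found in that source. The one point needing care, namely that act congruences on $\textbf{M}$ coincide with right congruences on $M,$ is precisely the one you flagged and handled correctly.
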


Given an $M$-act $A,$ there is a natural preorder $\leq_A$ on $A$ given by $$a\leq_A b\iff\langle a\rangle\subseteq\langle b\rangle.$$
The preorder $\leq_A$ induces an equivalence relation $\mathcal{R}_A$ on $A$ given by 
$$(a, b)\in\mathcal{R}_A\iff a\leq_A b\text{ and }b\leq_A a\iff\langle a\rangle=\langle b\rangle.$$
We shall refer to this equivalence relation as {\em Green's relation} $\mathcal{R}_A$ on $A.$  We use the term `Green's relation' since the relation $\mathcal{R}_{\textbf{M}}$ on $\textbf{M}$ coincides with Green's relation $\mathcal{R}$ on $M.$

\subsection{Separability conditions\nopunct}
~\par\vspace{0.5em}

We now provide a couple of basic facts about separability in general algebraic structures.

\begin{prop}
\label{algebra separability}
For an algebra $A$ the following statements hold.
\begin{enumerate}[leftmargin=*]
\item If $A$ is completely separable then it is strongly subalgebra separable.
\item If $A$ is strongly subalgebra separable then it is weakly subalgebra separable.
\item If $A$ is completely separable then it is residually finite.
\end{enumerate}
\end{prop}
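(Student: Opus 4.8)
The plan is to treat all four separability conditions as instances of a single scheme and to reduce the whole proposition to a monotonicity observation. Recall that each condition is, by definition, the separability condition with respect to a particular collection of non-empty subsets of $A$: residual finiteness corresponds to the collection $\mathcal{S}_{\mathrm{sing}}$ of all singletons, weak subalgebra separability to the collection $\mathcal{S}_{\mathrm{fg}}$ of all finitely generated subalgebras, strong subalgebra separability to the collection $\mathcal{S}_{\mathrm{sub}}$ of all subalgebras, and complete separability to the collection $\mathcal{S}_{\mathrm{all}}$ of all non-empty subsets.

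First I would isolate the following elementary fact: if $\mathcal{S}$ and $\mathcal{S}'$ are collections of non-empty subsets of $A$ with $\mathcal{S}\subseteq\mathcal{S}'$, and $A$ satisfies the separability condition with respect to $\mathcal{S}'$, then $A$ also satisfies it with respect to $\mathcal{S}$. Indeed, given $X\in\mathcal{S}$ and $a\in A\setminus X$, we have $X\in\mathcal{S}'$, so the hypothesis directly supplies a finite $C\in\mathcal{K}$ and a homomorphism $\theta\colon A\to C$ with $a\theta\notin X\theta$; the very same $\theta$ then separates $a$ from $X$ as an element of $\mathcal{S}$. No computation is needed, since the separating data demanded for $X$ is identical under both descriptions.

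With this fact in hand, each of the three parts reduces to checking a set-theoretic inclusion of collections. For (1), every subalgebra is in particular a non-empty subset, so $\mathcal{S}_{\mathrm{sub}}\subseteq\mathcal{S}_{\mathrm{all}}$ and complete separability implies strong subalgebra separability. For (2), every finitely generated subalgebra is a subalgebra, so $\mathcal{S}_{\mathrm{fg}}\subseteq\mathcal{S}_{\mathrm{sub}}$, giving the implication from strong to weak subalgebra separability. For (3), every singleton is a non-empty subset, so $\mathcal{S}_{\mathrm{sing}}\subseteq\mathcal{S}_{\mathrm{all}}$ and complete separability implies residual finiteness (using, as the text already notes, that separating $a$ from $\{b\}$ amounts to achieving $a\theta\neq b\theta$).

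Since the argument is entirely formal, there is no genuine obstacle to overcome; the only point deserving attention is structural. One might hope to deduce (3) by composing (1) and (2), but a singleton need not be a subalgebra, so $\mathcal{S}_{\mathrm{sing}}$ is not in general contained in $\mathcal{S}_{\mathrm{sub}}$, and residual finiteness therefore cannot be extracted from strong subalgebra separability by this monotonicity argument. This is precisely why (3) must be routed directly through complete separability and stated as a separate implication rather than folded into the chain (1)--(2).
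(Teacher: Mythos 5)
Your proposal is correct and matches the paper's approach: the paper simply states that all three implications follow immediately from the definitions, and your monotonicity observation (separability with respect to a larger collection implies separability with respect to any subcollection) is exactly the formal content of that remark. Your closing point that (3) cannot be obtained by composing (1) and (2), since singletons need not be subalgebras, is a correct and worthwhile clarification, but it does not change the substance of the argument.
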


\begin{proof}
The statements (1), (2) and (3) follow immediately from the definitions.
\end{proof}

\begin{lemma}
\label{subalgebra}
Let $A$ be an algebra, let $B$ be a subalgebra of $A,$ and let $\mathcal{C}$ be any of the following separability conditions: residual finiteness, weak subalgbra separability, strong subalgebra separability, complete separability.  If $A$ satisfies $\mathcal{C}$ then so does $B.$
\end{lemma}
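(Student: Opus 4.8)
The plan is to treat all four conditions uniformly by using the finite-index congruence reformulation of separability noted above and simply restricting a separating congruence from $A$ to $B.$ Each condition $\mathcal{C}$ corresponds to a collection $\mathcal{S}_A$ of subsets of $A$ of a prescribed type (singletons, finitely generated subalgebras, subalgebras, or arbitrary non-empty subsets), and likewise to a collection $\mathcal{S}_B$ of such subsets of $B.$ The key preliminary observation is that every $X\in\mathcal{S}_B$ is also a member of $\mathcal{S}_A.$ Indeed, a singleton of $B$ is a singleton of $A,$ a non-empty subset of $B$ is a non-empty subset of $A,$ and a subset $X\subseteq B$ is a subalgebra of $A$ exactly when it is a subalgebra of $B,$ since closure under the operations is an intrinsic condition. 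For the finitely generated case I would add that if $X=\langle U\rangle$ with $U$ a finite subset of $B,$ then because $B$ is itself a subalgebra the subalgebra of $A$ generated by $U$ already lies in $B$ and so equals $X$; hence $X$ is a finitely generated subalgebra of $A$ as well.

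With this in hand, take any $X\in\mathcal{S}_B$ and any $b\in B\!\setminus\!X.$ Since $X\in\mathcal{S}_A$ and $b\in A\!\setminus\!X,$ the hypothesis that $A$ satisfies $\mathcal{C}$ supplies a finite-index congruence $\rho$ on $A$ separating $b$ from $X,$ i.e.\ with $(b, x)\notin\rho$ for all $x\in X.$ I would then set $\sigma=\rho\cap(B\times B),$ which is readily seen to be a congruence on $B.$ The map $B/\sigma\to A/\rho$ given by $[b']_\sigma\mapsto[b']_\rho$ is well defined and injective, so the index of $\sigma$ is at most that of $\rho$ and hence finite.

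It remains to check that $\sigma$ separates $b$ from $X$ inside $B.$ For each $x\in X$ we have $b, x\in B,$ so $(b, x)\in B\times B$; since $(b, x)\notin\rho,$ it follows that $(b, x)\notin\sigma.$ Thus $\sigma$ is a finite-index congruence on $B$ that separates $b$ from $X,$ which is precisely the requirement for $B$ to satisfy $\mathcal{C}.$ As the argument is identical for all four choices of $\mathcal{S},$ the lemma follows in every case. I anticipate no real difficulty: the only points needing any care are that generation computed in $B$ agrees with generation computed in $A$ (which is exactly where the assumption that $B$ is a subalgebra enters) and the routine verification that the restriction of a finite-index congruence again has finite index.
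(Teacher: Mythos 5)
Your proposal is correct and follows essentially the same route as the paper: restrict a finite-index separating congruence on $A$ to $B$ via $\rho\cap(B\times B).$ The paper's proof is more terse (it leaves implicit both the observation that subsets of the relevant type in $B$ are of the same type in $A$ and the finite-index check), but the argument is the same.
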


\begin{proof}
Let $X\subseteq B$ be of the type associated with $\mathcal{C}$ and let $a\in B\!\setminus\!X.$  Since $A$ satisfies $\mathcal{C},$ there exists a finite index congruence $\rho$ on $A$ that separates $a$ from $X.$  Then $\rho\cap(B\times B),$ the restriction of $\rho$ to $B,$ is a finite index congruence on $B$ that separates $a$ from $X.$  Thus $B$ satisfies $\mathcal{C}.$
\end{proof}

Focussing now on monoid acts, for clarity we restate the aforementioned separability conditions in the setting of acts.

\begin{itemize}[leftmargin=*]
\item An $M$-act $A$ is said to be {\em residually finite} if for any pair of distinct elements $a, b\in A,$ there exist a finite $M$-act $C$ and an $M$-homomorphism $\theta : A\to C$ such that $a\theta\neq b\theta.$
\item An $M$-act $A$ is {\em weakly subact separable} if for any finitely generated subact $B$ of $A$ and any $a\in A\!\setminus\!B,$ there exist a finite $M$-act $C$ and an $M$-homomorphism $\theta : A\to C$ such that $a\theta\notin B\theta.$
\item An $M$-act $A$ is {\em strongly subact separable} if for any subact $B$ of $A$ and any $a\in A\!\setminus\!B,$ there exist a finite $M$-act $C$ and an $M$-homomorphism $\theta : A\to C$ such that $a\theta\notin B\theta.$
\item An $M$-act $A$ is {\em completely separable} if for any non-empty subset $X\subseteq A$ and any $a\in A\!\setminus\!X,$ there exist a finite $M$-act $C$ and an $M$-homomorphism $\theta : A\to C$ such that $a\theta\notin X\theta.$
\end{itemize}

In the remainder of the paper, when we speak of the ``separability conditions" we are referring to the above properties.

\begin{remark}
Note that an $M$-act $A$ is completely separable if and only if each $a\in A$ can be separated from $A\!\setminus\!\{a\}.$
\end{remark}

\section{\large{Cyclic Acts and Disjoint Unions}\nopunct}
\label{sec:cyclic}

If an algebra is weakly subalgebra separable, then it is clearly {\em cyclic subalgebra separable}, i.e.\ it satisfies the separability condition with respect to the collection of all cyclic (that is, 1-generated) subalgebras.  For various classes of algebras, including groups and semigroups, cyclic subalgebra separability is strictly weaker than weak subalgebra separability.  For monoid acts, however, these notions coincide.

\begin{prop}
\label{WSS=CSS}
Let $M$ be a monoid and let $A$ be an $M$-act.  Then $A$ is weakly subact separable if and only if it is cyclic subact separable.
\end{prop}

\begin{proof}
The direct implication is obvious.  For the converse, let $B$ be a subact of $A$ with a finite generating set $X,$ and let $a\in A\!\setminus\!B.$  Since $A$ is cyclic subact separable, for each $x\in X$ there exists a finite index congruence $\rho_x$ on $A$ that separates $a$ from $\langle x\rangle.$  Let $\rho=\bigcap_{x\in X}\rho_x$.  It is clear that $\rho$ has finite index.  Now consider any $b\in B.$  We have that $b\in\langle x\rangle$ for some $x\in X.$  Since $(a, b)\notin\rho_x,$ it follows that $(a, b)\notin\rho$.  Thus $\rho$ separates $a$ from $B,$ and hence $B$ is weakly subact separable. 
\end{proof}

Given a monoid $M$ and a congruence $\rho$ on $M,$ the following result characterises the separability conditions for the cyclic $M$-act $\textbf{M}/\rho$ in terms of separability in the monoid $M/\rho.$

\begin{thm}
\label{monoid quotient separability}
Let $M$ be a monoid and let $\rho$ be a congruence on $M.$ 
\begin{enumerate}[leftmargin=*]
\item The $M$-act $\textbf{M}/\rho$ is residually finite if and only if the monoid $M/\rho$ is residually finite (as a monoid).
\item The $M$-act $\textbf{M}/\rho$ is weakly subact separable if and only if the monoid $M/\rho$ satisfies the separability condition with respect to the collection of all principal right ideals.
\item The $M$-act $\textbf{M}/\rho$ is strongly subact separable if and only if the monoid $M/\rho$ satisfies the separability condition with respect to the collection of all right ideals.
\item The $M$-act $\textbf{M}/\rho$ is completely separable if and only if the monoid $M/\rho$ is completely separable.
\end{enumerate}
\end{thm}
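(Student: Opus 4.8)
The plan is to transport each separability question for the $M$-act $\textbf{M}/\rho$ across to the monoid $N:=M/\rho$, exploiting the fact that the underlying set of $\textbf{M}/\rho$ is $N$ and that the $M$-action is $[m]_\rho\cdot n=[mn]_\rho$, i.e.\ $x\cdot m=x\,(m\pi)$ where $\pi:M\to N$ is the quotient homomorphism. The first observation I would record is purely formal: since $\pi$ is surjective, an equivalence relation $\sigma$ on $N$ is an $M$-act congruence on $\textbf{M}/\rho$ if and only if $(x,y)\in\sigma$ implies $(xn,yn)\in\sigma$ for all $n\in N$, i.e.\ precisely when $\sigma$ is a \emph{right} congruence on the monoid $N$; moreover $\sigma$ has finite index as an $M$-act congruence exactly when it has finitely many classes. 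Thus, via the First Isomorphism Theorem, a finite $M$-act quotient of $\textbf{M}/\rho$ is the same thing as a finite-index right congruence on $N$. Next I would match up the families of subsets involved: a singleton is a singleton; a cyclic subact $\langle y\rangle=\{y\,(m\pi):m\in M\}=yN$ is exactly a principal right ideal of $N$; a subact is exactly a right ideal of $N$; and arbitrary non-empty subsets correspond to arbitrary non-empty subsets. For part (2) I would first invoke Proposition \ref{WSS=CSS} to replace weak subact separability by cyclic subact separability, so that the relevant subsets become the principal right ideals.

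With this dictionary in place, both implications in each of (1)--(4) reduce to a single comparison on $N$: separation by finite-index \emph{right} congruences (what act separability uses) versus separation by finite-index \emph{two-sided} congruences (what the monoid separability conditions use, since a finite monoid quotient of $N$ is a finite-index congruence on $N$). One direction is immediate and uniform across all four parts: every two-sided congruence is a right congruence, so if a finite-index congruence on $N$ separates $a$ from a set $X$, then the same relation, viewed as a finite $M$-act quotient, separates $a$ from $X$. This yields ``monoid separable $\Rightarrow$ act separable'' in every case.

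The substance of the theorem lies in the reverse direction, where I must manufacture a finite-index \emph{two-sided} congruence out of a finite-index \emph{right} congruence without losing the separation. Given a finite-index right congruence $\sigma$ on $N$, the assignment $n\mapsto\bigl([x]_\sigma\mapsto[xn]_\sigma\bigr)$ is a well-defined (because $\sigma$ is a right congruence) monoid homomorphism $N\to T_{N/\sigma}$, in the manner of Proposition \ref{rep}; let $\tau$ be its kernel. Being the kernel of a monoid homomorphism into the finite monoid $T_{N/\sigma}$, the relation $\tau$ is a two-sided congruence of finite index on $N$. Explicitly $\tau=\{(u,v):(wu,wv)\in\sigma\text{ for all }w\in N\}$, and taking $w=1$ (here we use that $N$ is a monoid) gives $\tau\subseteq\sigma$. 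This inclusion is the crux: if $\sigma$ separates $a$ from $X$, i.e.\ $(a,x)\notin\sigma$ for every $x\in X$, then \emph{a fortiori} $(a,x)\notin\tau$ for every $x\in X$, so the finite monoid quotient $N/\tau$ separates $a$ from $X$ as well. Applying this with $X$ a singleton, a principal right ideal, a right ideal, or an arbitrary subset yields ``act separable $\Rightarrow$ monoid separable'' for (1), (2), (3) and (4) respectively.

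The main obstacle is exactly this passage from right to two-sided congruences, and the reason it goes through uniformly for all four conditions is that $\tau$ separates \emph{at least} as much as $\sigma$ does; the only genuinely condition-specific work is the bookkeeping identifying cyclic subacts with principal right ideals and subacts with right ideals, together with the appeal to Proposition \ref{WSS=CSS} in part (2). I would emphasise that the two-sidedness of $\rho$ is used only to guarantee that $N=M/\rho$ is a monoid, which is precisely what makes the representation $N\to T_{N/\sigma}$ and the inclusion $\tau\subseteq\sigma$ available.
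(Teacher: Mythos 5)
Your proof is correct and rests on the same key idea as the paper's: represent the monoid into the full transformation monoid of a finite quotient, and note that the kernel of this representation is a finite-index \emph{two-sided} congruence contained in the given finite-index \emph{right} congruence (by evaluating at the identity), so it still separates. The only difference is organisational---you work intrinsically on $N=M/\rho$ via the dictionary between act congruences on $\textbf{M}/\rho$ and right congruences on $N$, whereas the paper builds the kernel congruence $\sigma=\text{ker }\psi$ on $M$ itself and then descends to $N$ by the Third Isomorphism Theorem (which also obliges it to track surjectivity of the separating homomorphism); your route avoids both of these minor complications but is not a genuinely different argument.
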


\begin{proof}
We prove all four statements with a single argument.
Let $A$ denote the cyclic $M$-act $\textbf{M}/\rho$ and let $N$ denote the monoid $M/\rho.$
Let $\mathcal{S}$ be the collection associated with any of the four act separability conditions, and let $\mathcal{S}^{\prime}$ be the collection associated with the corresponding monoid separability condition.  (By Proposition \ref{WSS=CSS}, for weak subact separability we may assume that $\mathcal{S}$ is the collection of all cyclic subacts.)  We show that $A$ satisfies the separability condition with respect to $\mathcal{S}$ if and only if $N$ satisfies the separability condition with respect to $\mathcal{S}^{\prime}.$\par
($\Rightarrow$)  Suppose that $A$ satisfies the separability condition with respect to $\mathcal{S}.$  Let $X\subseteq N$ with $X\in\mathcal{S}^{\prime}$ and let $y\in N\!\setminus\!X.$  Then $X\in\mathcal{S}$ (e.g.\ if $X$ is a right ideal of $N,$ then for any $[m]_{\rho}\in X$ and $q\in M$ we have $[m]_{\rho}\!\cdot\!q=[mq]_{\rho}=[m]_{\rho}[q]_{\rho}\in X,$ so $X$ is a subact of $A$) and $y\in A\!\setminus\!X.$
Therefore, there exist a finite $M$-act $C$ and a surjective $M$-homomorphism $\theta : A\to C$ such that $y\theta\notin X\theta.$  Now, by Proposition \ref{rep}, we have a monoid homomorphism $\psi : M\to T_C$ given by $c(m\psi)=cm$ for all $c\in C$ and $m\in M.$  Notice that $T_C$ is finite since $C$ is finite.  Therefore, the congruence $\sigma=\text{ker }\psi$ on $M$ has finite index.  Using the fact that $\theta$ is surjective, we have
\begin{align*}
(m, n)\in\sigma&\iff m\psi=n\psi\\
&\iff c(m\psi)=c(n\psi)\text{ for all }c\in C\\
&\iff cm=cn\text{ for all }c\in C\\
&\iff([q]_{\rho}\theta)m=([q]_{\rho}\theta)n\text{ for all }q\in M\\
&\iff[qm]_{\rho}\theta=[qn]_{\rho}\theta\text{ for all }q\in M\\
&\iff([qm]_{\rho}, [qn]_{\rho})\in\text{ker }\theta\text{ for all }q\in M,
\end{align*}
so
$$\sigma=\{(m, n)\in M\times M : ([qm]_{\rho}, [qn]_{\rho})\in\text{ker }\theta\text{ for all }q\in M\}.$$
Now let $(m, n)\in\rho.$  Since $\rho$ is a congruence, for any $q\in M$ we have $[qm]_{\rho}=[qn]_{\rho},$ so certainly $([qm]_{\rho}, [qn]_{\rho})\in\text{ker }\theta.$  Thus $(m, n)\in\sigma,$ and hence $\rho\subseteq\sigma.$  The relation
$$\sigma/\rho=\{([m]_{\rho}, [n]_{\rho})\in N\times N : (m, n)\in\sigma\}$$
is a congruence on $N$ (by the Third Isomorphism Theorem).  Moreover, $\sigma/\rho$ has finite index since $\sigma$ has finite index.\par 
If $([m]_{\rho}, [n]_{\rho})\in\sigma/\rho,$ then $(m, n)\in\sigma,$ so $([m]_{\rho}, [n]_{\rho})=([1m]_{\rho}, [1n]_{\rho})\in\text{ker }\theta.$  Thus $\sigma/\rho\subseteq\text{ker }\theta.$  Since $\text{ker }\theta$ separates $y$ from $X,$ we deduce that $\sigma/\rho$ separates $y$ from $X.$  Hence, the monoid $N$ satisfies the separability condition with respect to $\mathcal{S}^{\prime}.$\par
($\Leftarrow$)  Conversely, suppose that $N$ satisfies the separability condition with respect to $\mathcal{S}^{\prime}.$  Let $X\subseteq A$ with $X\in\mathcal{S}$ and let $a\in A\!\setminus\!X.$  Then $X\in\mathcal{S}^{\prime}$ (e.g.\ if $X$ is a subact of $A,$ then for any $[m]_{\rho}\in X$ and $[q]_{\rho}\in N$ we have $[m]_{\rho}[q]_{\rho}=[mq]_{\rho}=[m]_{\rho}\!\cdot\!q\in X,$ so $X$ is a right ideal of $N$) and $a\in N\!\setminus\!X.$  Therefore, there exist a finite monoid $P$ and a monoid homomorphism $\phi : N\to P$ such that $a\phi\notin X\phi.$  It is easy to see that $P$ is an $M$-act via $p\ast m=p([m]_{\rho}\phi).$  For any $m, q\in M,$ we have 
$$([m]_{\rho}\phi)\ast q=([m]_{\rho}\phi)([q]_{\rho}\phi)=([m]_{\rho}[q]_{\rho})\phi=([mq]_{\rho})\phi=([m]_{\rho}\!\cdot\!q)\phi.$$
Thus $\phi$ may be viewed as an $M$-homomorphism from $A$ to $P,$ and we have that $\phi$ separates $a$ from $X.$  This completes the proof.
\end{proof}

From Theorem \ref{monoid quotient separability} we deduce several corollaries.

\begin{corollary}
\label{monoid separability}
Let $M$ be a monoid.
\begin{enumerate}[leftmargin=*]
\item The $M$-act $\textbf{M}$ is residually finite if and only if $M$ is residually finite.
\item The $M$-act $\textbf{M}$ is weakly subact separable if and only if $M$ satisfies the separability condition with respect to the collection of all principal right ideals.
\item The $M$-act $\textbf{M}$ is strongly subact separable if and only if $M$ satisfies the separability condition with respect to the collection of all right ideals.
\item The $M$-act $\textbf{M}$ is completely separable if and only if $M$ is completely separable.
\end{enumerate}
\end{corollary}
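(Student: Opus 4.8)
The plan is to obtain all four statements as the special case of Theorem \ref{monoid quotient separability} in which $\rho$ is the trivial (equality) congruence $\Delta=\{(m,m):m\in M\}$ on $M.$ Under this choice of $\rho,$ no identifications of distinct elements take place, so the quotient monoid $M/\Delta$ is canonically isomorphic to $M$ as a monoid, and the cyclic $M$-act $\textbf{M}/\Delta$ is canonically isomorphic to $\textbf{M}$ as an $M$-act. Both isomorphisms are realised by the map $[m]_{\Delta}\mapsto m,$ and I would simply note that this map respects the monoid multiplication in the first case and the $M$-action in the second, which is immediate from the definitions of the quotient monoid and the quotient act.

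With these two canonical isomorphisms in place, each part of the corollary is exactly the corresponding part of Theorem \ref{monoid quotient separability} read through them. Part (1) of the theorem asserts that $\textbf{M}/\rho$ is residually finite if and only if $M/\rho$ is; specialising to $\rho=\Delta$ and transporting along the isomorphisms $\textbf{M}/\Delta\cong\textbf{M}$ and $M/\Delta\cong M$ yields part (1) of the corollary. Parts (2), (3) and (4) follow in precisely the same manner, giving the separability condition with respect to principal right ideals, the separability condition with respect to all right ideals, and complete separability, respectively. Since the relevant separability condition is a property of $M/\rho$ that is invariant under monoid isomorphism, and the act-side condition is invariant under $M$-act isomorphism, no further argument is required once the isomorphisms are recorded.

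There is no real obstacle in this proof: the entire content is the observation that $\textbf{M}$ is the quotient of itself by the equality congruence and that $M/\Delta\cong M.$ The only point deserving a sentence of care is the verification that $\Delta$ does indeed induce these canonical isomorphisms on both the monoid and the act, but this is routine and follows directly from the fact that the $\Delta$-classes are singletons.
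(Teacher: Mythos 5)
Your proposal is correct and is exactly the paper's (implicit) argument: the corollary is deduced from Theorem \ref{monoid quotient separability} by taking $\rho$ to be the equality congruence, under which $M/\rho\cong M$ and $\textbf{M}/\rho\cong\textbf{M}$ canonically. The paper offers no further proof precisely because this specialisation is immediate, so your write-up matches it in substance.
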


\begin{corollary}
\label{group}
Let $G$ be a group.
\begin{enumerate}[leftmargin=*]
\item The $G$-act $\textbf{G}$ is residually finite if and only if $G$ is residually finite (as a group).
\item The $G$-act $\textbf{G}$ {\em is} strongly subact separable (and hence weakly subact separable).
\item The $G$-act $\textbf{G}$ is completely separable if and only if it is finite.
\end{enumerate}
\end{corollary}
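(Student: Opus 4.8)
The plan is to obtain all three parts from Corollary~\ref{monoid separability}, which translates each separability condition on the $G$-act $\textbf{G}$ into the corresponding separability condition on the monoid $G$, and then to invoke the special structure of groups.

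For part~(1), Corollary~\ref{monoid separability}(1) identifies residual finiteness of $\textbf{G}$ with residual finiteness of $G$ as a monoid, so it suffices to verify that for a group these coincide with residual finiteness as a group. One direction is trivial, since a finite group is a finite monoid. For the other, I would observe that if $\phi\colon G\to P$ is a monoid homomorphism into a finite monoid, then $\phi(1)=1_P$ and $\phi(g)\phi(g^{-1})=1_P$, so the image $\phi(G)$ is a finite \emph{group}; hence any separating finite monoid quotient is already a separating finite group quotient.

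For part~(2), Corollary~\ref{monoid separability}(3) reduces the claim to the assertion that $G$ satisfies the separability condition with respect to all right ideals. The crucial point is that a group has no proper right ideals: if $I$ is a right ideal and $x\in I$, then $1=xx^{-1}\in I$ and so $g=1\cdot g\in I$ for every $g\in G$, forcing $I=G$. Equivalently, $\textbf{G}$ has no proper subacts. Since $G\setminus G=\emptyset$, the separability condition holds vacuously, so $\textbf{G}$ is strongly subact separable; weak subact separability then follows from Proposition~\ref{algebra separability}(2).

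The only part with genuine content is~(3), and even there the argument is short. If $G$ is finite then $\textbf{G}$ is a finite act and is completely separable via the identity homomorphism, giving the `if' direction. For the converse I would argue directly: given a separating $G$-homomorphism $\theta\colon\textbf{G}\to C$ with $C$ finite and $c_0=1\theta$, compatibility forces $g\theta=c_0g$ for all $g$, and the separation of $1$ from the rest means $c_0g\neq c_0$ whenever $g\neq 1$; this makes $g\mapsto c_0g$ injective, so $|G|\leq|C|<\infty$. (Alternatively, via Corollary~\ref{monoid separability}(4) one reduces to complete separability of $G$ as a monoid, where congruences correspond to normal subgroups with equal-sized classes, so a singleton class forces the diagonal congruence and hence finiteness.) The main thing to get right is this collapse: in a group a single singleton class is possible only when the whole act is finite.
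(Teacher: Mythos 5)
Your proposal is correct, and on parts (1) and (2) it follows exactly the paper's route: both go through Corollary~\ref{monoid separability}, with you additionally writing out the two facts the paper leaves as one-line remarks (that residual finiteness as a monoid and as a group coincide, via the observation that the image of a group under a monoid homomorphism is a group; and that a group has no proper right ideals, so strong subact separability holds vacuously). The only genuine divergence is in part (3). The paper disposes of it by citing Corollary~\ref{monoid separability}(4) together with \cite[Lemma 2.4]{Miller}, i.e.\ it translates the question to complete separability of the monoid $G$ and then invokes an external result that a completely separable group is finite. You instead argue directly at the level of the act: any $G$-homomorphism $\theta:\textbf{G}\to C$ satisfies $g\theta=(1\theta)g$, so a homomorphism separating $1$ from $G\!\setminus\!\{1\}$ forces $g\mapsto(1\theta)g$ to be injective (if $(1\theta)g=(1\theta)h$ then $(1\theta)(gh^{-1})=1\theta$, whence $gh^{-1}=1$), giving $|G|\leq|C|<\infty$. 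This is a sound, self-contained argument that bypasses both the translation step and the external citation; what the paper's version buys is brevity and consistency with its general strategy of reducing act separability to monoid separability, while yours buys independence from \cite{Miller} and makes visible the mechanism (the regularity of the action of $G$ on any quotient of $\textbf{G}$) that forces finiteness. Both are valid; your parenthetical alternative is essentially the paper's proof, modulo unpacking \cite[Lemma 2.4]{Miller} as the statement that a congruence on a group has equal-sized classes.
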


\begin{proof}
(1)  This follows from Corollary \ref{monoid separability}(1) and the fact that a group is residually finite as a group if and only if it is residually finite as a monoid.\par
(2)  Since $G$ has no proper right ideals, it trivially satisfies the separability condition with respect to the collection of all right ideals, so $\textbf{G}$ is strongly subact separable by Corollary \ref{monoid separability}(3).\par
(3)  This follows from Corollary \ref{monoid separability}(4) and \cite[Lemma 2.4]{Miller}.
\end{proof}

\begin{remark}
For various classes of algebraic structures, including groups and semigroups, cyclic subalgebra separability implies residual finiteness.  Corollary \ref{group} shows that this is not the case for acts; indeed, an act can be strongly subact separable but not residually finite.
\end{remark}

\begin{corollary}
\label{all quotients RF}
Let $M$ be a monoid.  If every cyclic $M$-act is residually finite (resp.\ completely separable), then every quotient of $M$ is residually finite (resp.\ completely separable).
\end{corollary}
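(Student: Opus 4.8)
The plan is to combine the correspondence between cyclic acts and right congruences (Proposition \ref{cyclic act, right congruence}) with the characterisation of separability for cyclic acts supplied by Theorem \ref{monoid quotient separability}. The crucial observation is that a quotient of $M$ is, by definition, a monoid of the form $M/\rho$ for some (two-sided) congruence $\rho$ on $M,$ and every such congruence is in particular a \emph{right} congruence. This is precisely what links quotients of the monoid to cyclic $M$-acts.

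First I would fix an arbitrary quotient $N=M/\rho$ of $M,$ where $\rho$ is a congruence on $M.$ Regarding $\rho$ as a right congruence, Proposition \ref{cyclic act, right congruence} yields that $\textbf{M}/\rho$ is a cyclic $M$-act. By hypothesis every cyclic $M$-act is residually finite (resp.\ completely separable), and hence $\textbf{M}/\rho$ satisfies this separability condition. It then remains only to transfer the property from the act back to the monoid.

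For this final step I would invoke Theorem \ref{monoid quotient separability}(1) (resp.\ (4)), which asserts that $\textbf{M}/\rho$ is residually finite (resp.\ completely separable) as an $M$-act if and only if the monoid $M/\rho$ is residually finite (resp.\ completely separable) as a monoid. Applying the relevant direction gives that $N=M/\rho$ is residually finite (resp.\ completely separable), as required.

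I do not expect any genuine obstacle, since the substantive content is carried entirely by Theorem \ref{monoid quotient separability}; the only point needing (brief) care is the elementary remark that a two-sided congruence is also a right congruence, so that a quotient monoid of $M$ arises as a cyclic $M$-act to which the earlier machinery applies. The argument runs identically in the residually finite and completely separable cases, differing only in which clause of Theorem \ref{monoid quotient separability} one cites, so I would phrase the whole proof uniformly rather than treating the two cases separately.
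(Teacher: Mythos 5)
Your proposal is correct and is essentially identical to the paper's own proof: both take a congruence $\rho$ on $M,$ observe that $\textbf{M}/\rho$ is a cyclic $M$-act satisfying the hypothesis, and apply Theorem \ref{monoid quotient separability} to transfer the property to the monoid $M/\rho.$ The only difference is that you spell out the (correct) bookkeeping detail that a two-sided congruence is a right congruence, which the paper leaves implicit.
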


\begin{proof}
Let $\rho$ be a congruence on $M.$  Since the cyclic $M$-act $\textbf{M}/\rho$ is residually finite (resp.\ completely separable), the quotient $M/\rho$ is residually finite (resp.\ completely separable) by Theorem \ref{monoid quotient separability}.
\end{proof}

An $M$-act $A$ is said to be {\em faithful} if for any pair of distinct elements $m, n\in M$ there exists some $a\in A$ such that $am\neq an.$  For instance, the $M$-act $\textbf{M}$ is faithful.\par
The following result enhances Corollary \ref{monoid separability}(1).

\begin{prop}
The following are equivalent for a monoid $M$:
\begin{enumerate}
\item the $M$-act $\textbf{M}$ is residually finite;
\item there exists a residually finite faithful $M$-act;
\item $M$ is residually finite (as a monoid).
\end{enumerate}
\end{prop}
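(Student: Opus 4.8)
The plan is to prove the proposition via the cycle $(1)\Rightarrow(2)\Rightarrow(3)$, since the equivalence $(1)\Leftrightarrow(3)$ is already furnished by Corollary \ref{monoid separability}(1); thus it suffices to slot $(2)$ into the loop. Indeed, the implication $(1)\Rightarrow(2)$ requires no work at all: the $M$-act $\textbf{M}$ is faithful (as noted just above the statement), so if $\textbf{M}$ is residually finite then it is itself a residually finite faithful $M$-act, witnessing $(2)$.

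The substance of the argument lies in $(2)\Rightarrow(3)$. First I would fix a residually finite faithful $M$-act $A$ and a pair of distinct elements $m,n\in M$; the goal is to separate $m$ from $n$ in a finite monoid. Faithfulness provides an element $a\in A$ with $am\neq an$. Residual finiteness of $A$ then yields a finite $M$-act $C$ and an $M$-homomorphism $\theta\colon A\to C$ with $(am)\theta\neq(an)\theta$. Using that $\theta$ is an $M$-homomorphism, I would rewrite this as $(a\theta)m\neq(a\theta)n$ in $C$.

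The key move---and the one place where I expect a little care is needed---is converting this separation inside a finite \emph{act} into separation inside a finite \emph{monoid}. For this I would invoke Proposition \ref{rep}: the finite $M$-act $C$ induces a monoid homomorphism $\psi\colon M\to T_C$ satisfying $c(m\psi)=cm$ for all $c\in C.$ Since $(a\theta)(m\psi)=(a\theta)m\neq(a\theta)n=(a\theta)(n\psi),$ the transformations $m\psi$ and $n\psi$ disagree at $a\theta,$ so $m\psi\neq n\psi.$ As $C$ is finite, $T_C$ is a finite monoid, and hence $\psi$ separates $m$ from $n$ in a finite monoid. Therefore $M$ is residually finite as a monoid, establishing $(3)$.

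Finally, $(3)\Rightarrow(1)$ is precisely Corollary \ref{monoid separability}(1), which closes the cycle and completes the proof. The only genuine obstacle is the act-to-monoid passage in $(2)\Rightarrow(3)$, and the representation correspondence of Proposition \ref{rep} resolves it cleanly; everything else is either the definition of faithfulness, the $M$-homomorphism identity $(am)\theta=(a\theta)m$, or an appeal to a result already in hand.
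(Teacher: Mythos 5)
Your proposal is correct and matches the paper's proof essentially verbatim: the implication $(1)\Rightarrow(2)$ is immediate from faithfulness of $\textbf{M}$, the equivalence $(1)\Leftrightarrow(3)$ is quoted from Corollary \ref{monoid separability}(1), and the core implication $(2)\Rightarrow(3)$ uses exactly the paper's argument---faithfulness to find $a$ with $am\neq an$, residual finiteness of the act to find $\theta : A\to C$, and Proposition \ref{rep} to pass to the finite monoid $T_C$ via the computation $(a\theta)(m\psi)\neq(a\theta)(n\psi)$. No gaps; nothing further to add.
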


\begin{proof}
(1)\,$\Rightarrow$\,(2) is obvious, and (1)\,$\Leftrightarrow$\,(3) is Corollary \ref{monoid separability}(1).\par
(2)\,$\Rightarrow$\,(3).  Let $m, n\in M$ with $m\neq n.$  By assumption, there exists a residually finite faithful $M$-act $A.$  Then there exists some $a\in A$ such that $am\neq an.$  Since $A$ is residually finite, there exist a finite $M$-act $C$ and an $M$-homomorphism $\theta : A\to C$ such that $(am)\theta\neq(an)\theta.$  By Proposition \ref{rep}, we have a monoid homomorphism $\psi : M\to T_C$ given by $c(p\psi)=cp$ for all $c\in C$ and $p\in M.$  The monoid $T_C$ is finite since $C$ is finite.  Moreover, we have that
$$(a\theta)(m\psi)=(a\theta)m=(am)\theta\neq(an)\theta=(a\theta)n=(a\theta)(n\psi),$$
so $m\psi\neq n\psi.$  Thus $M$ is residually finite.
\end{proof}

We now present some examples to show that, for the class of monoid acts, the converses of the statements in Proposition \ref{algebra separability} do not hold.

\begin{ex}
\label{SSS, not CS}
\textit{There exists a monoid $M$ such that $\textbf{M}$ is strongly subact separable but not completely separable}.\par
Let $M$ be the monoid with zero defined by the presentation 
$$\langle a, b, c\,|\,w^2=0\,(w\in\{a, b, c\}^+)\rangle.$$
It was proved in \cite[Example 5.11]{Miller} that $M$ is not completely separable, and hence $\textbf{M}$ is not completely separable by Corollary \ref{monoid separability}.\par
To prove that $\textbf{M}$ is strongly subact separable, let $I$ be a right ideal of $M$ and let $u\in M\!\setminus\!I.$  Clearly $0\in I,$ so $u\neq 0.$  Let $n=|u|$ be the length of $u$ over $\{a, b, c\}.$  Let $J$ be the ideal 
$$\{w\in\{a, b, c\}^+ : |w|\geq n+1\}\cup\{0\}$$
of $M.$  It is clear that the Rees quotient $M/J$ is finite and $[u]_J=\{u\}.$  Thus the Rees congruence $\sim_J$ on $M$ separates $u$ from $I,$ as required.
\end{ex}

\begin{ex}
\label{WSS, not SSS}
\textit{There exists a monoid $M$ such that $\textbf{M}$ is weakly subact separable but not strongly subact separable}.\par
Let $\mathbb{N}$ be the semigroup of natural numbers under addition, let $G$ be any infinite residually finite group, and let $M$ be the monoid $(\mathbb{N}\times G)^1.$\par
We first show that $\textbf{M}$ is not strongly subact separable.  Let $e$ be the identity of $G,$ and consider the (right) ideal 
$$A=\bigl(\{2\}\times(G\!\setminus\!\{e\}\bigr)\cup(\{3, 4, \dots\}\times G)$$ 
of $M.$  We claim that $(2, e)$ cannot be separated from $A.$  Indeed, let $\sim$ be a finite index congruence on $M.$  Then there exist $g, h\in G$ with $g\neq h$ such that $(1, g)\sim(1, h).$  Then we have
$$(2, e)=(1, g)(1, g^{-1})\sim(1, h)(1, g^{-1})=(2, hg^{-1})\in A,$$
as required.\par
We prove that $\textbf{M}$ is weakly subact separable by showing that $M$ satisfies the separability condition with respect to the collection of all principal right ideals and invoking Corollary \ref{monoid separability}.  So, let $A=\langle a\rangle,$ where $a=(n, g),$ be a principal right ideal of $M,$ and let $b\in M\!\setminus\!A.$  It can be easily shown that $$A=\{(n, g)\}\cup(\{n+1, n+2, \dots\}\times G).$$
If $b=1,$ then $b$ can be separated from $A$ by the congruence with classes $\{1\}$ and $M\!\setminus\!\{1\}.$  Suppose then that $b=(m, h)\in\mathbb{N}\times G.$
Since $b\notin A,$ we must have that $m\leq n.$  Let $J$ be the ideal $\{m+1, m+2, \dots\}$ of $\mathbb{N}.$  We now define a monoid homomorphism
$$\theta : M\to(\mathbb{N}/J)^1, 1\mapsto 1, (p, u)\mapsto[p]_J.$$
Let $0$ denote the element $J$ of the Rees quotient $\mathbb{N}/J.$  Clearly $b\theta=\{m\}$ and $(p, u)\theta=0$ for all $p\geq m+1.$  In particular, we have $(A\!\setminus\{a\})\theta=\{0\}.$
Furthermore, if $m<n$ we have $A\theta=\{0\},$ so $\theta$ separates $b$ from $A,$ and we are done.\par
Suppose then that $m=n.$  Then $g\neq h.$  Since $G$ is residually finite, there exist a finite group $K$ and a homomorphism $\phi : G\to K$ such that $g\phi\neq h\phi.$  We define a homomorphism
$$\psi : M\to\bigl((\mathbb{N}/J)\times K\bigr)^1, 1\mapsto 1, (p, u)\mapsto([p]_J, u\phi).$$  
Clearly $\psi$ separates $b$ from $A,$ as desired.
\end{ex}

\begin{ex}
\label{RF, not WSS}
\textit{There exists a monoid $M$ such that $\textbf{M}$ is residually finite but not weakly subact separable (and hence not completely separable)}.\par
Let $A=\{a^i : i\geq 0\}\cong\mathbb{N}_0,$ let $B=\{b_i : i\in\mathbb{Z}\}\cup\{0\}$ be a null semigroup, and let $M=A\cup B.$  We define a multiplication on $M,$ extending those on $A$ and $B,$ as follows:
$$a^ib_j=b_ja^i=b_{i+j},\, a^i0=0a^i=0.$$
It is to see that $M$ is a commutative monoid under this multiplication.\par
We first prove that $\textbf{M}$ is not weakly subact separable by showing that we cannot separate the element $b_0$ from the cyclic subact 
$$C=\langle b_1\rangle=\{b_i : i\in\mathbb{N}\}\cup\{0\}.$$
Indeed, let $\sim$ be a finite index congruence on $\textbf{M}.$  Then there exist $i, j\in\mathbb{N}$ with $i<j$ such that $b_{-i}\sim b_{-j}.$  It follows that
$$b_0=b_{-j}a^j\sim b_{-i}a^j=b_{j-i}\in C,$$ as required.\par
To prove that $\textbf{M}$ is residually finite, we just need to show that $M$ is residually finite by Corollary \ref{monoid separability}.  So, let $x, y\in M$ with $x\neq y.$  There are three cases to consider.\par
(1) $x\in A.$  Then $x=a^i$ for some $i\geq 0.$  We may assume without loss of generality that $y$ belongs to the ideal $I=M\!\setminus\!\{1, a, \dots, a^i\}.$  Then the Rees congruence $\sim_I$ separates $x$ and $y.$\par
(2) $x\in B\!\setminus\!\{0\}$ and $y=0.$  In this case the congruence with classes $A, B\!\setminus\!\{0\}$ and $\{0\}$ separates $x$ and $y.$\par
(3) $x, y\in B\!\setminus\!\{0\}.$  Then $x=b_i$ and $y=b_j$ for some $i, j\in\mathbb{Z}$ with $i\neq j.$  Let $n=|i-j|+1,$ and let $\sim$ be the equivalence relation on $M$ with classes $\{0\}, C_m, D_m \,(m\in\{0, \dots, n-1\}),$ where
$$C_m=\{a^k : k\equiv m\:\:\text{mod }n\},\; D_m=\{b_k : k\equiv m\:\:\text{mod }n\}.$$
It is easy to see that $\sim$ is a finite index congruence on $M$ that separates $x$ and $y,$ as required.
\end{ex}

We have observed that, in general, weakly subact separable monoid acts need not be residually finite.  The following lemma provides a condition under which weak subact separability implies residual finiteness.

\begin{lemma}
\label{Green}
Let $M$ be a monoid, let $A$ be an $M$-act, and suppose that Green's relation $\mathcal{R}_A$ is the equality relation on $A$ (or, equivalently, the preorder $\leq_A$ is antisymmetric).  If $A$ is weakly subact separable then it is residually finite.
\end{lemma}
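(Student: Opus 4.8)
The plan is to reduce residual finiteness to cyclic subact separability, using the hypothesis to turn a pair of distinct elements into an instance of one element lying outside the cyclic subact generated by the other. First I would record the elementary observation that for $a, b\in A$ we have $a\leq_A b$ if and only if $a\in\langle b\rangle$. Indeed, if $a\in\langle b\rangle$ then $a=bm$ for some $m\in M$, whence $an=bmn\in\langle b\rangle$ for all $n$, so $\langle a\rangle\subseteq\langle b\rangle$; conversely $\langle a\rangle\subseteq\langle b\rangle$ forces $a=a\cdot 1\in\langle a\rangle\subseteq\langle b\rangle$. This equivalence is the crux of the translation.

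Next, given distinct $a, b\in A$, I would exploit the antisymmetry of $\leq_A$ (equivalently, that $\mathcal{R}_A$ is the equality relation): since $a\neq b$, we cannot have both $a\leq_A b$ and $b\leq_A a$, as that would give $(a,b)\in\mathcal{R}_A$ and hence $a=b$. Without loss of generality assume $a\not\leq_A b$; by the observation above this says precisely that $a\notin\langle b\rangle$. Note that the symmetric case $b\not\leq_A a$ is handled identically by interchanging the roles of $a$ and $b$.

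Finally, I would invoke Proposition \ref{WSS=CSS}, which guarantees that a weakly subact separable act is cyclic subact separable. Applying this to the cyclic subact $\langle b\rangle$ and the element $a\in A\!\setminus\!\langle b\rangle$ produces a finite index congruence $\rho$ on $A$ separating $a$ from $\langle b\rangle$. Since $b\in\langle b\rangle$, this yields $(a,b)\notin\rho$, so $\rho$ separates $a$ and $b$. As $a$ and $b$ were an arbitrary pair of distinct elements, $A$ is residually finite.

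I do not anticipate a serious obstacle in this argument; it is essentially a short reduction. The only points requiring care are the equivalence $a\leq_A b\iff a\in\langle b\rangle$, which powers the whole reduction, and ensuring that the ``without loss of generality'' step is legitimate — which it is, since the conclusion we seek (separating $a$ from $b$) is symmetric in $a$ and $b$, while the hypothesis merely forces at least one of the two containment failures to occur.
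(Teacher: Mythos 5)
Your proposal is correct and follows essentially the same route as the paper: use antisymmetry of $\leq_A$ to conclude that one of the two distinct elements, say $a$, lies outside the cyclic subact $\langle b\rangle$, then apply weak subact separability (the paper applies it directly to the finitely generated subact $\langle b\rangle$, while you cite the trivial direction of Proposition \ref{WSS=CSS}, which is the same thing) to separate $a$ from $\langle b\rangle$ and hence from $b$. The only differences are cosmetic, such as phrasing the separation via a finite index congruence rather than a homomorphism to a finite act.
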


\begin{proof}
Let $a, b\in A$ with $a\neq b.$  Then $(a, b)\notin\mathcal{R}_A,$ so $\langle a\rangle\neq\langle b\rangle.$  Letting $B=\langle b\rangle,$ we may assume without loss of generality that $a\notin B.$  Since $A$ is weakly subact separable, there exist a finite $M$-act $C$ and an $M$-homomorphism $\theta : A\to C$ such that $a\theta\notin B\theta.$  In particular, we have $a\theta\neq b\theta,$ as required.
\end{proof}

\begin{remark}
\label{Green remark}
If $M$ is a commutative idempotent monoid, then for every $M$-act $A$ Green's relation $\mathcal{R}_A$ is the equality relation on $A.$  Indeed, if $(a, b)\in\mathcal{R}_A$ then there exist $m, n\in M$ such that $a=bm$ and $b=an,$ and hence  
$$a=anm=amn=bm^2n=bmn=an=b,$$
as required.
\end{remark}

The following result shows that each of the four separability conditions is preserved under arbitrary disjoint unions.

\begin{prop}
\label{disjoint union}
Let $M$ be a monoid, and let $A$ be an $M$-act that is a disjoint union of subacts $A_i, i\in I.$  Let $\mathcal{C}$ be any of the four separability conditions.  Then $A$ satisfies $\mathcal{C}$ if and only if each $A_i$ satisfies $\mathcal{C}.$
\end{prop}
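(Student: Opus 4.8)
The plan is to treat the two directions separately, with essentially all the work in the converse. The forward direction is immediate: each $A_i$ is a subact, and hence a subalgebra, of $A,$ so if $A$ satisfies $\mathcal{C}$ then so does each $A_i$ by Lemma \ref{subalgebra}.

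For the converse, suppose each $A_i$ satisfies $\mathcal{C},$ take $X\subseteq A$ of the type associated with $\mathcal{C}$ and $a\in A\!\setminus\!X,$ and say $a\in A_{i_0}.$ The key structural observation is that, because the $A_i$ are subacts partitioning $A,$ both $A_{i_0}$ and its complement $A\!\setminus\!A_{i_0}=\bigsqcup_{j\neq i_0}A_j$ are subacts of $A$; in particular the action never moves an element out of $A_{i_0},$ nor into it. I would first record that $Y:=X\cap A_{i_0}$ is either empty or again of the type associated with $\mathcal{C}$ within $A_{i_0}$: for complete separability this is any subset; for residual finiteness $X$ is a singleton and $Y$ is it or empty; for strong subact separability $Y$ is an intersection of subacts, hence a subact; and for weak subact separability one checks that a finite generating set of $X$ splits across the components, so that those generators lying in $A_{i_0}$ finitely generate $Y.$

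The central construction is a one-point extension. Since $A_{i_0}$ satisfies $\mathcal{C},$ there exist a finite $M$-act $C^{\prime}$ and an $M$-homomorphism $\theta^{\prime}:A_{i_0}\to C^{\prime}$ separating $a$ from $Y$ (when $Y=\emptyset$ I would simply take $C^{\prime}$ to be a one-point act and $\theta^{\prime}$ constant). Form the finite $M$-act $C=C^{\prime}\cup\{\infty\},$ where $\infty$ is a new zero, i.e.\ $\infty m=\infty$ for all $m\in M,$ and define $\Theta:A\to C$ by $\Theta|_{A_{i_0}}=\theta^{\prime}$ and $x\Theta=\infty$ for $x\notin A_{i_0}.$ I would then check that $\Theta$ is an $M$-homomorphism, which is exactly where the subact property is used: for $x\in A_{i_0}$ we have $xm\in A_{i_0}$ and $(xm)\Theta=(x\theta^{\prime})m=(x\Theta)m,$ while for $x\notin A_{i_0}$ we have $xm\notin A_{i_0},$ so $(xm)\Theta=\infty=\infty m=(x\Theta)m.$ Finally $\Theta$ separates $a$ from $X$: indeed $a\Theta=a\theta^{\prime}\in C^{\prime},$ each element of $Y$ maps under $\theta^{\prime}$ to a point of $C^{\prime}$ distinct from $a\theta^{\prime},$ and each element of $X\!\setminus\!A_{i_0}$ maps to $\infty\notin C^{\prime}$; hence $a\Theta\notin X\Theta,$ as required.

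I expect the main obstacle to be purely bookkeeping: verifying in each of the four cases that $Y=X\cap A_{i_0}$ retains the correct type (the finitely generated case for weak subact separability being the only one needing a short argument), together with the routine but essential check that the complement of a single component is a subact, so that the one-point extension $\Theta$ is genuinely a homomorphism. No real difficulty arises from $I$ being infinite, since the argument only ever distinguishes the single component $A_{i_0}$ from the collapsed remainder $\{\infty\}.$
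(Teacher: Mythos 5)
Your proposal is correct and follows essentially the same route as the paper: the forward direction via Lemma \ref{subalgebra}, and for the converse the same one-point extension (adjoining a new zero to a finite act separating $a$ from $X\cap A_{i_0}$ inside $A_{i_0}$, and collapsing everything outside $A_{i_0}$ onto that zero). The only cosmetic difference is that the paper splits into the cases $X\cap A_{i_0}=\emptyset$ (handled by the two-class congruence with classes $A_{i_0}$ and $A\!\setminus\!A_{i_0}$) and $X\cap A_{i_0}\neq\emptyset,$ whereas you unify them via a trivial one-point target act; your explicit verification that $X\cap A_{i_0}$ retains the type associated with $\mathcal{C}$ is detail the paper leaves as ``easy to see.''
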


\begin{proof}
The forward implication follows from Lemma \ref{subalgebra}, so we just need to prove the converse.\par
Let $X\subseteq A$ be of the type associated with $\mathcal{C}$ and $a\in A\!\setminus\!X.$  Now, $a\in A_i$ for some $i\in I.$  Clearly $A$ is the disjoint union of its subacts $A_i$ and $A\!\setminus\!A_i.$
There are two cases to consider.\par
(1) $X\cap A_i=\emptyset.$  Clearly the equivalence relation on $A$ with classes $A_i$ and $A\!\setminus\!A_i$ is a finite index congruence that separates $a$ from $X.$\par
(2) $X\cap A_i\neq\emptyset.$  It is easy to see that $X\cap A_i$ is of the type associated with $\mathcal{C}.$  Therefore, since $A_i$ satisfies $\mathcal{C},$ there exist a finite $M$-act $C$ and an $M$-homomorphism $\phi : A_i\to C$ such that $a\phi\notin(X\cap A_i)\phi.$  Now define an $M$-homomorphism $\theta : A\to C\cup\{0\},$ where $0$ is a zero element disjoint from $C,$ by 
$$a^{\prime}\theta=
\begin{cases}
a^{\prime}\phi&\text{if }a^{\prime}\in A_i\\ 
0&\text{otherwise.}
\end{cases}$$
Clearly $\theta$ separates $a$ from $X,$ as required.
\end{proof}

An $M$-act $A$ is said to be {\em decomposable} if it is the disjoint union of two subacts $A_1$ and $A_2$; otherwise, $A$ is {\em indecomposable}.  Every $M$-act has a unique decomposition into indecomposable subacts \cite[Theorem 1.5.10]{Kilp}.  From Proposition \ref{disjoint union} we deduce:

\begin{corollary}
\label{decomposition}
Let $M$ be a monoid, let $A$ be an $M$-act, and let $A=\bigcup_{i\in I}A_i$ be the unique decomposition of $A$ into indecomposable subacts.  Let $\mathcal{C}$ be any of the four separability conditions.  Then $A$ satisfies $\mathcal{C}$ if and only if each $A_i$ satisfies $\mathcal{C}.$
\end{corollary}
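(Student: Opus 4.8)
The plan is to derive Corollary~\ref{decomposition} as an immediate consequence of Proposition~\ref{disjoint union}. The key observation is that the unique decomposition $A=\bigcup_{i\in I}A_i$ into indecomposable subacts \cite[Theorem 1.5.10]{Kilp} is in particular a disjoint union of subacts $A_i$, $i\in I$. Thus the decomposition from the corollary is precisely an instance of the hypothesis of Proposition~\ref{disjoint union}.

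Concretely, I would first invoke \cite[Theorem 1.5.10]{Kilp} to note that the $A_i$ are pairwise disjoint subacts whose union is $A$. Since a disjoint union is exactly the setting of Proposition~\ref{disjoint union}, I then apply that proposition directly: for any of the four separability conditions $\mathcal{C}$, the act $A$ satisfies $\mathcal{C}$ if and only if each $A_i$ satisfies $\mathcal{C}$. This yields the claimed equivalence with no further work.

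There is essentially no obstacle here, since the corollary is a specialisation of Proposition~\ref{disjoint union} to the canonical indecomposable decomposition. The only thing to verify — and it is trivial — is that the indecomposable decomposition genuinely is a disjoint union, which is part of the content of \cite[Theorem 1.5.10]{Kilp}. I would therefore keep the proof to a single sentence, simply pointing out that $A=\bigcup_{i\in I}A_i$ is a disjoint union of subacts and that the result then follows from Proposition~\ref{disjoint union}.
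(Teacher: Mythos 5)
Your proposal is correct and matches the paper exactly: the corollary is stated as an immediate consequence of Proposition~\ref{disjoint union}, since the unique decomposition into indecomposable subacts is in particular a disjoint union of subacts. Nothing further is needed.
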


An $M$-act $A$ is said to be {\em free} if it has a generating set $U$ such that every $a\in A$ can be uniquely written in the form $a=um$ for some $u\in U$ and $m\in M.$  By \cite[Theorem 1.5.13]{Kilp}, an $M$-act $A$ is free if and only if it is $M$-isomorphic to a disjoint union of $M$-acts all of which are $M$-isomorphic to $\textbf{M}.$  Thus, by Proposition \ref{disjoint union}, we have:

\begin{corollary}
Let $M$ be a monoid, let $A$ be a free $M$-act, and let $\mathcal{C}$ be any of the four separability conditions.  Then $A$ satisfies $\mathcal{C}$ if and only if $\textbf{M}$ satisfies $\mathcal{C}.$
\end{corollary}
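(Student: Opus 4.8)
The plan is to reduce the statement immediately to the disjoint-union result already in hand. First I would invoke \cite[Theorem 1.5.13]{Kilp} to fix an $M$-isomorphism $A\cong\bigcup_{i\in I}A_i,$ where the $A_i$ are pairwise disjoint subacts, each $M$-isomorphic to $\textbf{M}.$ Since every $M$-act is non-empty by definition, the generating set of the free act $A$ is non-empty, and hence the index set $I$ is non-empty; I would flag this explicitly, as it is what prevents the final biconditional from degenerating.

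Next I would record the (routine but worth stating) fact that each of the four separability conditions is invariant under $M$-isomorphism. Indeed, every such condition asks for the separation of an element from a subset of a prescribed type by an $M$-homomorphism into a finite $M$-act — equivalently, by a finite-index congruence — and all of this data transports along any $M$-isomorphism. Consequently, for each $i\in I,$ the subact $A_i$ satisfies $\mathcal{C}$ if and only if $\textbf{M}$ satisfies $\mathcal{C}.$

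Finally I would apply Proposition \ref{disjoint union} to the decomposition $A=\bigcup_{i\in I}A_i,$ which gives that $A$ satisfies $\mathcal{C}$ if and only if every $A_i$ satisfies $\mathcal{C}.$ Combining this with the preceding step, and using that $I\neq\emptyset$ so that the clause ``every $A_i$ satisfies $\mathcal{C}$'' is equivalent to the single condition ``$\textbf{M}$ satisfies $\mathcal{C},$'' yields the claimed equivalence. There is essentially no obstacle in this argument; the only points meriting a moment's care are the isomorphism-invariance of the conditions and the non-emptiness of $I,$ both of which I would make explicit rather than leave to the reader.
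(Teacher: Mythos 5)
Your proposal is correct and follows exactly the paper's route: decompose the free act as a disjoint union of copies of $\textbf{M}$ via \cite[Theorem 1.5.13]{Kilp} and apply Proposition \ref{disjoint union}. The two points you flag (isomorphism-invariance of the conditions and non-emptiness of the index set) are left implicit in the paper but are indeed the only details needed.
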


We now provide a characterisation of completely separable group acts.

\begin{prop}
Let $G$ be a group and let $A$ be a $G$-act.  Then $A$ is completely separable if and only if it is a disjoint union of finite cyclic $G$-acts.
\end{prop}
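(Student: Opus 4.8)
The plan is to reduce the statement to the behaviour of individual orbits and then analyse a single orbit directly. Since $G$ is a group, every $G$-act $A$ is the disjoint union of its orbits $aG$ ($a\in A$), and each orbit is a cyclic (indeed indecomposable) subact: it is generated by any of its elements, and it cannot be split into two nonempty invariant pieces. Thus $A$ is automatically a disjoint union of cyclic $G$-acts, and the real content of the proposition is that complete separability is equivalent to each of these cyclic pieces being \emph{finite}. This also places the result in context as a refinement of Corollary \ref{group}(3), which handles the single orbit $O=\textbf{G}$.

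For the backward implication I would argue quickly. A finite $M$-act is completely separable, since the identity homomorphism onto itself already separates every point from every subset not containing it. Hence if $A$ is a disjoint union of finite cyclic $G$-acts, then every summand is finite and therefore completely separable, and Proposition \ref{disjoint union} yields that $A$ is completely separable.

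For the forward implication, suppose $A$ is completely separable and fix an orbit $O=aG$. By Lemma \ref{subalgebra} (or Corollary \ref{decomposition}), $O$ is again completely separable, so there are a finite $G$-act $C$ and a $G$-homomorphism $\theta\colon O\to C$ separating $a$ from $O\setminus\{a\}$; equivalently, $a$ is the unique element of $O$ mapped by $\theta$ to $a\theta$. The key step is to show that the restriction of $\theta$ is a \emph{bijection} from $O$ onto the orbit $(a\theta)G\subseteq C$. Surjectivity is immediate from $(ag)\theta=(a\theta)g$. Injectivity is where the group structure is essential: if $(ag)\theta=(ag')\theta$, then acting on both sides by $g^{-1}$ and using that $\theta$ is a $G$-map gives $a\theta=(ag'g^{-1})\theta$, whence $ag'g^{-1}=a$ by the uniqueness of the preimage, and so $ag=ag'$. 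Since $(a\theta)G$ lies in the finite act $C$, the orbit $O$ is finite. As $a$ was arbitrary, $A$ is a disjoint union of finite cyclic $G$-acts, completing the proof.

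I expect the injectivity step to be the main obstacle, and it is precisely here that invertibility in $G$ is used: the ability to ``undo'' the action by $g$ is what transfers the singleton-preimage property of $a$ to the whole orbit and forces $O$ to have the same (finite) cardinality as the image orbit $(a\theta)G$. Everything else reduces to the orbit decomposition together with the disjoint-union and subalgebra lemmas already established.
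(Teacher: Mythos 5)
Your proof is correct and takes essentially the same route as the paper: decompose $A$ into orbits (which for a group act are exactly the indecomposable cyclic subacts the paper obtains via Knauer's result), settle the reverse direction by Proposition \ref{disjoint union}, and in the forward direction use the group inverse to transfer separation of a single point across its whole orbit. The only difference is cosmetic: the paper runs the translation argument as a contradiction (an infinite orbit forces a collision $xg\sim xh$ in any finite-index congruence, which translates to a collision at $x$), whereas you phrase it positively as injectivity of the separating homomorphism on the orbit, so the orbit embeds in a finite act --- equivalent formulations under the congruence--homomorphism correspondence.
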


\begin{proof}
Since finite acts are completely separable, the reverse implication follows from Proposition \ref{disjoint union}.\par
For the direct implication, let $A=\bigcup_{i\in I}A_i$ be the unique decomposition of $A$ into indecomposable subacts.  Then each $A_i$ is cyclic by \cite[Proposition 2.3]{Knauer}.  We claim that each $A_i$ is finite.  Assume for a contradiction that some $A_i=\langle x\rangle$ is infinite.  Let $\sim$ be a finite index congruence on $A_i.$  Then there exist some $g, h\in G$ such that $xg\neq xh$ and $xg\sim xh.$  It follows that
$$x=(xg)g^{-1}\sim(xh)g^{-1}=x(hg^{-1})\in A_i\!\setminus\!\{x\},$$
so $A_i$ is not completely separable.  But then, by Corollary \ref{decomposition}, this contradicts the fact that $A$ is completely separable.
\end{proof}

\section{\large{Equivalent Formulations}\nopunct}
\label{sec:EF}

The purpose of this section is to provide equivalent formulations of the four separability conditions.\par
It is a well-known fact in universal algebra that residual finiteness is equivalent to the intersection of all finite index congruences equalling the equality relation.  Thus we have:

\begin{prop}
\label{RF characterisation}
An $M$-act $A$ is residually finite if and only if the intersection of all finite index congruences on $A$ is the equality relation $\{(a, a) : a\in A\}.$
\end{prop}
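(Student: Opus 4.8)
The plan is to prove both directions of the biconditional directly from the definition of residual finiteness, treating the intersection of all finite index congruences as a single congruence and relating its classes to separability of pairs of elements. Write $\delta=\bigcap\{\rho : \rho\text{ is a finite index congruence on }A\}$. Since an arbitrary intersection of congruences is again a congruence, $\delta$ is a congruence on $A$, and the equality relation $\Delta=\{(a,a):a\in A\}$ is certainly contained in $\delta$ because every congruence is reflexive. Thus the content of the statement is the reverse containment $\delta\subseteq\Delta$, i.e.\ that distinct elements are never $\delta$-related.

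For the forward implication, I would assume $A$ is residually finite and show $\delta\subseteq\Delta$. Take any $(a,b)\in\delta$ and suppose for contradiction that $a\neq b$. By residual finiteness there exist a finite $M$-act $C$ and an $M$-homomorphism $\theta:A\to C$ with $a\theta\neq b\theta$. The kernel $\ker\theta$ is a congruence on $A$, and it has finite index because $A/\ker\theta\cong\operatorname{Im}\theta$ is finite (being a subact of the finite act $C$), invoking the First Isomorphism Theorem quoted in the excerpt. Hence $\ker\theta$ is one of the congruences over which $\delta$ is intersected, so $\delta\subseteq\ker\theta$, giving $(a,b)\in\ker\theta$ and thus $a\theta=b\theta$, a contradiction. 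Therefore $a=b$ and $\delta=\Delta$.

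For the converse, I would assume $\delta=\Delta$ and verify residual finiteness directly. Given distinct $a,b\in A$, we have $(a,b)\notin\delta=\Delta$, so by definition of the intersection there is some finite index congruence $\rho$ on $A$ with $(a,b)\notin\rho$. The quotient $A/\rho$ is then a finite $M$-act, and the canonical projection $\theta:A\to A/\rho$, $a\mapsto[a]_\rho$, is an $M$-homomorphism satisfying $a\theta=[a]_\rho\neq[b]_\rho=b\theta$, which separates $a$ from $b$. Hence $A$ is residually finite.

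I do not expect any genuine obstacle here, as the result is the standard universal-algebra fact specialised to acts; the only points requiring care are the routine verifications that an intersection of congruences is a congruence and that $\ker\theta$ has finite index, the latter following cleanly from the First Isomorphism Theorem. If anything, the mild subtlety is ensuring that ``finite index congruence'' and ``finite quotient act'' are used interchangeably, which the cited isomorphism theorem legitimises; consequently the proof reduces to the two short separation arguments above.
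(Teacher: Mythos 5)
Your proof is correct and complete; the paper itself gives no argument for this proposition, merely citing it as a well-known fact of universal algebra, and your two separation arguments (using the First Isomorphism Theorem to see that $\ker\theta$ has finite index, and the canonical projection onto a finite quotient for the converse) are exactly the standard proof that the paper implicitly invokes.
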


The above characterisation leads to the following necessary and sufficient condition for a quotient of an $M$-act to be residually finite.

\begin{prop}
\label{quotient}
Let $M$ be a monoid, let $A$ be an $M$-act, and let $\rho$ be a congruence on $A.$  Then $A/\rho$ is residually finite if and only if $\rho$ is the intersection of a family of finite index congruences on $A.$
\end{prop}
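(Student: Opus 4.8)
The plan is to use Proposition \ref{RF characterisation} together with the standard correspondence between congruences on $A/\rho$ and congruences on $A$ that contain $\rho.$ Recall that a congruence $\bar\sigma$ on $A/\rho$ corresponds to the congruence
$$\sigma=\{(a,b)\in A\times A : ([a]_{\rho},[b]_{\rho})\in\bar\sigma\}$$
on $A$; this $\sigma$ contains $\rho,$ and its classes are precisely the unions of $\rho$-classes constituting the classes of $\bar\sigma,$ so $\sigma$ and $\bar\sigma$ have the same index. Conversely, any congruence on $A$ containing $\rho$ arises in this way, with $\sigma/\rho$ the corresponding congruence on $A/\rho$ (by the Third Isomorphism Theorem), again with preserved index. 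The fact I would record first is therefore: the finite index congruences on $A/\rho$ are in index-preserving bijection with the finite index congruences on $A$ that contain $\rho.$

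For the forward implication, suppose $A/\rho$ is residually finite. By Proposition \ref{RF characterisation}, the intersection of all finite index congruences $\bar\sigma$ on $A/\rho$ is the equality relation on $A/\rho.$ I would then let $\mathcal{F}$ be the family of all the corresponding finite index congruences $\sigma$ on $A.$ A direct chase shows that $(a,b)\in\bigcap_{\sigma\in\mathcal{F}}\sigma$ if and only if $([a]_{\rho},[b]_{\rho})$ lies in every finite index congruence on $A/\rho,$ if and only if $[a]_{\rho}=[b]_{\rho},$ if and only if $(a,b)\in\rho.$ Hence $\rho=\bigcap_{\sigma\in\mathcal{F}}\sigma$ is an intersection of finite index congruences on $A.$

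For the converse, suppose $\rho=\bigcap_{i\in I}\sigma_i$ with each $\sigma_i$ a finite index congruence on $A.$ The key observation is that $\rho\subseteq\sigma_i$ for every $i$ automatically, since $\rho$ is the intersection; this is exactly what is needed for each $\sigma_i$ to descend to a finite index congruence $\sigma_i/\rho$ on $A/\rho.$ Given distinct elements $[a]_{\rho},[b]_{\rho}$ of $A/\rho$ we have $(a,b)\notin\rho=\bigcap_{i}\sigma_i,$ so $(a,b)\notin\sigma_i$ for some $i,$ whence $([a]_{\rho},[b]_{\rho})\notin\sigma_i/\rho.$ Thus the intersection of the finite index congruences $\sigma_i/\rho$ on $A/\rho$ is the equality relation, and Proposition \ref{RF characterisation} yields that $A/\rho$ is residually finite.

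The argument is essentially bookkeeping with the correspondence theorem; the only points requiring care are that the correspondence preserves finite index in both directions, and --- in the converse --- the small but crucial remark that expressing $\rho$ as an intersection forces each $\sigma_i$ to contain $\rho,$ which is precisely what permits passage to the quotient $A/\rho.$ I do not expect a genuine obstacle beyond stating these containment and index claims cleanly.
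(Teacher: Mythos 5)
Your proof is correct and takes essentially the same route as the paper's: the forward direction pulls each finite index congruence $\bar\sigma$ on $A/\rho$ back to the congruence $\{(a,b) : ([a]_{\rho},[b]_{\rho})\in\bar\sigma\}$ on $A$, and the converse pushes each $\sigma_i$ down to $\sigma_i/\rho$ and separates via Proposition \ref{RF characterisation}. The only difference is presentational: you make explicit the containment $\rho\subseteq\sigma_i$ and the index-preserving correspondence that the paper leaves implicit, which is a virtue rather than a deviation.
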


\begin{proof}
($\Rightarrow$) Let $\{\sigma_i : i\in I\}$ be the set of all finite index congruences on $A/\rho.$  By Proposition \ref{RF characterisation}, the intersection $\cap_{i\in I}\sigma_i$ is the equality relation on $A/\rho.$ 
For each $i\in I$ we define a congruence $\rho_i$ on $A$ by
$$a\,\rho_i\,b\iff[a]_{\rho}\,\sigma_i\,[b]_{\rho}.$$
Clearly each $\rho_i$ has finite index since $\sigma_i$ has finite index.  We claim that $\rho=\cap_{i\in I}\rho_i.$  That $\rho\subseteq\cap_{i\in I}\rho_i$ follows from the fact that each $\sigma_i$ is reflexive.  For the reverse containment, let $(a, b)\notin\rho.$  Then $([a]_{\rho}, [b]_{\rho})\notin\cap_{i\in I}\sigma_i.$  Therefore, there exists some $i\in I$ such that $([a]_{\rho}, [b]_{\rho})\notin\sigma_i,$ and hence $(a, b)\notin\rho_i.$  Thus $(a, b)\notin\cap_{i\in I}\rho_i,$ as required.\par
($\Leftarrow$) Let $\rho=\cap_{i\in I}\rho_i$ where each $\rho_i$ is a finite index congruence on $A.$  Let $x, y$ be distinct elements of $A/\rho.$  Now, $x=[a]_{\rho}$ and $y=[b]_{\rho}$ for some $a, b\in A.$  Since $(a, b)\notin\rho,$  there exists some $i\in I$ such that $(a, b)\notin\rho_i.$
The relation
$$\rho_i/\rho=\{([c]_{\rho}, [d]_{\rho})\in A/\rho\times A/\rho : (c, d)\in\rho_i\}$$
is a congruence on $A/\rho,$ it has finite index since $\rho_i$ has finite index, and $(x, y)\notin\rho_i/\rho$ since $(a, b)\notin\rho_i.$  Thus $\rho_i/\rho$ separates $x$ and $y,$ and hence $A/\rho$ is residually finite.
\end{proof}

Since the congruences on the $M$-act $\textbf{M}$ are precisely the right congruences on $M,$ we obtain:

\begin{corollary}
\label{cyclic RF}
Let $M$ be a monoid and let $A$ be a cyclic $M$-act, so $A\cong\textbf{M}/\rho$ for some right congruence $\rho$ on $M.$  Then $A$ is residually finite if and only if $\rho$ is the intersection of a family of finite index right congruences on $M.$
\end{corollary}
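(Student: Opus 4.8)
The plan is to apply Proposition \ref{quotient} directly to the $M$-act $\textbf{M}$. First I would make explicit the observation flagged in the sentence preceding the statement: a congruence on the $M$-act $\textbf{M}$ is an equivalence relation $\sigma$ on $M$ satisfying $(m, n)\in\sigma\Rightarrow(mp, np)\in\sigma$ for all $p\in M,$ which is exactly the defining condition for a right congruence on $M.$ Hence the congruences on $\textbf{M}$ coincide with the right congruences on $M.$ Moreover, the index of such a relation (its number of classes) is the same whether we view it as an act congruence on $\textbf{M}$ or as a right congruence on $M,$ so the finite index congruences on $\textbf{M}$ are precisely the finite index right congruences on $M.$

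With this identification established, I would instantiate Proposition \ref{quotient} taking $\textbf{M}$ for the act $A$ and the given right congruence $\rho,$ now regarded as a congruence on $\textbf{M}.$ The proposition then yields that $\textbf{M}/\rho$ is residually finite if and only if $\rho$ is the intersection of a family of finite index congruences on $\textbf{M},$ which by the previous paragraph is the same as being the intersection of a family of finite index right congruences on $M.$

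Finally, since $A\cong\textbf{M}/\rho$ and residual finiteness is preserved under $M$-isomorphism (any finite $M$-act together with a separating $M$-homomorphism from $\textbf{M}/\rho$ transports along the isomorphism to one from $A,$ and conversely), I would conclude that $A$ is residually finite if and only if $\textbf{M}/\rho$ is, closing the chain of equivalences. I expect no real obstacle here: all the substantive content lives in Proposition \ref{quotient}, and the only points requiring care are the routine verification that act congruences on $\textbf{M}$ are exactly right congruences on $M$ and that this correspondence matches their indices.
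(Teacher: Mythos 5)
Your proposal is correct and is exactly the paper's argument: the paper deduces this corollary from Proposition \ref{quotient} via the single observation that congruences on the $M$-act $\textbf{M}$ are precisely the right congruences on $M.$ Your additional remarks (matching of indices, invariance of residual finiteness under $M$-isomorphism) merely make explicit what the paper leaves implicit.
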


The next two results provide equivalent characterisations of weak subact separability and strong subact separability in terms of separation of pairs of elements.

\begin{prop}
\label{WSS condition}
Let $M$ be a monoid.   The following are equivalent for an $M$-act $A$:
\begin{enumerate}
\item $A$ is weakly subact separable;
\item for any cyclic subact $B$ of $A$ and any $a\in A\!\setminus\!B,$ there exist a finite $M$-act $C$ and an $M$-homomorphism $\theta : A/B\to C$ such that $a\theta\neq 0_B\theta.$
\end{enumerate}
\end{prop}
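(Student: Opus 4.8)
The plan is to prove the two implications separately, in each case relating $M$-homomorphisms out of $A$ to those out of the Rees quotient $A/B$ via the natural quotient map, and invoking Proposition~\ref{WSS=CSS} to move between weak subact separability and cyclic subact separability where needed. The guiding principle is that an $M$-homomorphism out of $A$ which collapses all of $B$ to a single point corresponds to an $M$-homomorphism out of $A/B$.

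For $(2)\Rightarrow(1)$, I would observe that (2) yields cyclic subact separability directly, so that (1) follows by Proposition~\ref{WSS=CSS}. Given a cyclic subact $B$ and $a\in A\setminus B,$ take the finite $M$-act $C$ and $M$-homomorphism $\theta:A/B\to C$ provided by (2), and precompose with the Rees quotient map $\pi:A\to A/B.$ The composite $\pi\theta:A\to C$ sends $a$ to $a\theta$ and every $b\in B$ to $0_B\theta$; since $a\theta\neq 0_B\theta$ we obtain $a(\pi\theta)\notin B(\pi\theta),$ so $\pi\theta$ separates $a$ from $B.$

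For $(1)\Rightarrow(2)$, note that a cyclic subact $B$ is in particular finitely generated, so weak subact separability directly provides a finite $M$-act $C$ and an $M$-homomorphism $\phi:A\to C$ with $a\phi\notin B\phi.$ The difficulty is that $\phi$ need not factor through $A/B,$ since it may distinguish elements of $B.$ To remedy this, I would observe that $B\phi$ is a subact of $C$ (being the image of a subact under an $M$-homomorphism), and form the finite Rees quotient $C/(B\phi)$ with quotient map $\pi':C\to C/(B\phi).$ The composite $\phi\pi'$ now sends all of $B$ to the single point $0_{B\phi},$ hence respects the Rees congruence $\rho_B$ and induces an $M$-homomorphism $\theta:A/B\to C/(B\phi).$ The crucial point is that $a\phi\notin B\phi$ means exactly that the class of $a\phi$ in $C/(B\phi)$ is a singleton distinct from $0_{B\phi},$ so $a\theta\neq 0_{B\phi}=0_B\theta,$ giving (2).

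I expect the main obstacle to lie in this second direction: one must recognise that the image $B\phi$ is a subact of the finite act $C,$ so that collapsing it produces a finite quotient act through which $\phi$ factors, and one must check that the non-membership $a\phi\notin B\phi$ is precisely what survives this collapse to keep $a$ separated from $0_B.$ Everything else amounts to the routine interplay between homomorphisms out of $A$ and out of $A/B$ mediated by the Rees quotient map.
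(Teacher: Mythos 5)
Your proposal is correct and follows essentially the same route as the paper: for $(2)\Rightarrow(1)$ both precompose with the Rees quotient map and invoke Proposition~\ref{WSS=CSS}, and for $(1)\Rightarrow(2)$ both separate $a$ from $B$ in a finite act $C$ and then collapse the image subact $B\phi$ via a Rees quotient so that the composite factors through $A/B$ (the paper merely writes out this induced map explicitly, using the preimage of $B\phi$ to describe its two cases, rather than appealing to the factorisation property as you do).
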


\begin{proof}
(1)\,$\Rightarrow$\,(2).  Let $B$ be a cyclic subact of $A$ and let $a\in A\!\setminus\!B.$  Since $A$ is weakly subact separable, there exist a finite $M$-act $D$ and an $M$-homomorphism $\phi : A\to D$ such that $a\phi\notin B\phi.$  Let $E$ denote the subact $B\phi$ of $D,$ and let $$B^{\prime}=\{a^{\prime}\in A : a^{\prime}\phi\in E\}.$$
It can be easily shown that $B^{\prime}$ is a subact of $A,$ and clearly $B\subseteq B^{\prime}.$  Letting $C=D/E,$ we define an $M$-homomorphism 
$$\theta : A/B\to C, a^{\prime}\mapsto
\begin{cases}
a^{\prime}\phi&\text{if }a^{\prime}\in A\!\setminus\!B^{\prime}\\ 
0_E&\text{if }a^{\prime}\in(B^{\prime}\!\setminus\!B)\cup\{0_B\}.
\end{cases}$$
We have that $a\theta=a\phi\in C\!\setminus\!\{0_E\}$ and $0_B\theta=0_E,$ so $a\theta\neq 0_B\theta,$ as required.\par
(2)\,$\Rightarrow$\,(1).  By Proposition \ref{WSS=CSS}, it suffices to prove that $A$ is cyclic subact separable.  So, let $B$ be a cyclic subact of $A$ and let $a\in A\!\setminus\!B.$  By assumption, there exist a finite $M$-act $C$ and an $M$-homomorphism $\theta : A/B\to C$ such that $a\theta\neq 0_B\theta.$  Letting $\phi$ denote the canonical $M$-homomorphism $A\to A/B,$ we conclude that $\phi\circ\theta : A\to C$ separates $a$ from $B.$  Thus $A$ is weakly subact separable.
\end{proof}

\begin{prop}
\label{SSS condition}
Let $M$ be a monoid.   The following are equivalent for an $M$-act $A$:
\begin{enumerate}
\item $A$ is strongly subact separable;
\item for any subact $B$ of $A$ and any $a\in A\!\setminus\!B,$ there exist a finite $M$-act $C$ and an $M$-homomorphism $\theta : A/B\to C$ such that $a\theta\neq 0_B\theta.$
\end{enumerate}
\end{prop}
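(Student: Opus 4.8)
The plan is to adapt the proof of Proposition \ref{WSS condition} almost verbatim, the only structural change being that the subacts are now completely arbitrary rather than cyclic. The implication (2)\,$\Rightarrow$\,(1) actually becomes more direct here: given any subact $B$ and any $a\in A\!\setminus\!B,$ condition (2) supplies a finite $M$-act $C$ and an $M$-homomorphism $\theta : A/B\to C$ with $a\theta\neq 0_B\theta.$ Letting $\pi : A\to A/B$ denote the canonical projection, I would check that $\pi\circ\theta : A\to C$ separates $a$ from $B$: every $b\in B$ satisfies $b\pi=0_B,$ so $b(\pi\circ\theta)=0_B\theta,$ whereas $a(\pi\circ\theta)=a\theta\neq 0_B\theta.$ Hence $A$ is strongly subact separable, and here---unlike in the weak case---no appeal to Proposition \ref{WSS=CSS} is needed, since (2) already quantifies over all subacts.

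For the reverse implication (1)\,$\Rightarrow$\,(2), I would run the same construction as in Proposition \ref{WSS condition}. Strong subact separability yields a finite $M$-act $D$ and an $M$-homomorphism $\phi : A\to D$ with $a\phi\notin B\phi.$ Setting $E=B\phi,$ a subact of $D,$ and $B^{\prime}=\{a^{\prime}\in A : a^{\prime}\phi\in E\},$ one sees that $B^{\prime}$ is a subact of $A$ (being the full preimage of the subact $E$) and that $B\subseteq B^{\prime}.$ Taking $C=D/E,$ I would define $\theta : A/B\to C$ by sending each $a^{\prime}\in A\!\setminus\!B^{\prime}$ to $a^{\prime}\phi$ and collapsing everything in $(B^{\prime}\!\setminus\!B)\cup\{0_B\}$ to $0_E.$ Since $a\phi\notin E$ forces $a\in A\!\setminus\!B^{\prime},$ we obtain $a\theta=a\phi\neq 0_E=0_B\theta,$ which is exactly what (2) demands.

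The one point that genuinely needs verification---and the main obstacle---is that this $\theta$ is a well-defined $M$-homomorphism $A/B\to D/E.$ This is precisely where the two properties of $B^{\prime}$ come into play: because $B^{\prime}=\phi^{-1}(E)$ is a subact and $E$ is a subact of $D,$ a class of $A/B$ is sent to the collapsed element $0_E$ exactly when its representative lies in $B^{\prime},$ and this condition is stable under the $M$-action on both sides. Concretely, I would confirm $(a^{\prime}m)\theta=(a^{\prime}\theta)m$ by a short case analysis according to whether $a^{\prime}$ and $a^{\prime}m$ fall in $A\!\setminus\!B^{\prime}$ or in $B^{\prime},$ using that $\phi$ is an $M$-homomorphism and that each Rees quotient collapses exactly its distinguished subact. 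As this check is identical to the one implicit in Proposition \ref{WSS condition}, I expect it to be routine rather than delicate, and the proof overall to be a direct transcription of the weak case.
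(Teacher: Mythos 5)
Your proposal is correct and matches the paper's own treatment: the paper proves Proposition~\ref{SSS condition} by stating that the argument is essentially the same as that of Proposition~\ref{WSS condition}, which is exactly the adaptation you carry out, including the observation that the appeal to Proposition~\ref{WSS=CSS} becomes unnecessary in the strong case.
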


\begin{proof}
The proof is essentially the same as that of Proposition \ref{WSS condition}.
\end{proof}

In the following we present a necessary and sufficient condition for a monoid act to be completely separable.  We first make the following definition.

\begin{defn}
\label{CS defn}
Let $M$ be a monoid and let $A$ be an $M$-act.  For each pair $a, b\in A,$ define a set
$$[a, b]_M=\{m\in M : a=bm\}.$$
In the case that $A=\textbf{M},$ we omit the subscript $M$ in the above notation and just write $[a, b].$
\end{defn}

\begin{thm}
\label{CS condition}
Let $M$ be a monoid.  An $M$-act $A$ is completely separable if and only if for each $a\in A$ the set $\{[a, b]_M : b\in A\}$ is finite.
\end{thm}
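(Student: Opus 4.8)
The plan is to use the reformulation recorded in the Remark of Section~\ref{sec:prelim}: an $M$-act $A$ is completely separable if and only if each $a\in A$ can be separated from $A\!\setminus\!\{a\}$. I would prove both implications by setting up an explicit dictionary between separating homomorphisms and the sets $[a,b]_M$, the central computation being how $[a,b]_M$ behaves under the action.

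For the forward direction, fix $a\in A$ and take a finite $M$-act $C$ together with an $M$-homomorphism $\theta:A\to C$ separating $a$ from $A\!\setminus\!\{a\}$, so that the fibre $\theta^{-1}(a\theta)$ equals $\{a\}$. The key observation is that under this injectivity-at-$a$ condition one has $[a,b]_M=[a\theta,b\theta]_C$ for every $b\in A$: the inclusion $\subseteq$ is immediate since $bm=a$ gives $(b\theta)m=a\theta$, while $\supseteq$ uses that $(b\theta)m=a\theta$ means $(bm)\theta=a\theta$, which forces $bm=a$ by the fibre condition. Hence $\{[a,b]_M:b\in A\}=\{[a\theta,b\theta]_C:b\in A\}\subseteq\{[a\theta,c]_C:c\in C\}$, and the last set has at most $|C|$ elements, so it is finite.

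For the converse, assume $\{[a,b]_M:b\in A\}$ is finite for every $a$, fix $a$, and define a relation $\rho_a$ on $A$ by $b\,\rho_a\,b'\iff[a,b]_M=[a,b']_M$. This is plainly an equivalence relation whose index equals $|\{[a,b]_M:b\in A\}|$ and is therefore finite. The crucial step, which I expect to be the main obstacle, is verifying that $\rho_a$ is a congruence, i.e.\ is compatible with the action. This rests on the identity $[a,bm]_M=\{k\in M:mk\in[a,b]_M\}$, obtained from $(bm)k=b(mk)$: it shows that $[a,bm]_M$ is determined by $m$ and $[a,b]_M$ alone, so $[a,b]_M=[a,b']_M$ implies $[a,bm]_M=[a,b'm]_M$, that is $bm\,\rho_a\,b'm$.

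It remains to check that $\rho_a$ separates $a$ from every other element. Since $a\cdot1=a$, we have $1\in[a,a]_M$; thus if $b\,\rho_a\,a$ then $1\in[a,b]_M$, giving $b=b\cdot1=a$. Hence the $\rho_a$-class of $a$ is $\{a\}$, and so the finite-index congruence $\rho_a$ separates $a$ from $A\!\setminus\!\{a\}$. As $a$ was arbitrary, $A$ is completely separable, completing the argument.
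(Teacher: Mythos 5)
Your proof is correct, and it splits into one half that matches the paper and one half that does not. The converse direction is essentially identical to the paper's argument: the same relation (you call it $\rho_a$, the paper $\sim_a$) defined by equality of the sets $[a,b]_M$, the same computation $[a,bm]_M=\{k\in M : mk\in[a,b]_M\}$ showing compatibility with the action, and the same observation that $1\in[a,b]_M$ forces $b=a$, so the class of $a$ is a singleton. The forward direction, however, takes a genuinely different route. The paper argues by contraposition entirely in the language of congruences: if $\{[a,b]_M : b\in A\}$ is infinite, any finite index congruence $\sim$ must identify some $b,c$ with $[a,b]_M\neq[a,c]_M$, and picking $m$ in the symmetric difference yields $a=bm\sim cm\neq a$, so $a$ cannot be separated from $A\setminus\{a\}$. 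You instead argue directly: a homomorphism $\theta$ onto a finite act $C$ with $\theta^{-1}(a\theta)=\{a\}$ satisfies $[a,b]_M=\{m\in M : a\theta=(b\theta)m\}$ for every $b$, so the family $\{[a,b]_M : b\in A\}$ injects into a family indexed by the elements of $C$ and has at most $|C|$ members. Your version buys an explicit bound and the conceptual point that, in any separating quotient, $[a,b]_M$ depends only on the image $b\theta$; the paper's contrapositive avoids invoking the homomorphism formulation and stays consistent with the congruence-based phrasing used throughout the paper. One pedantic remark: when $A=\{a\}$ the set $A\setminus\{a\}$ is empty and the definition of complete separability supplies no separating $\theta$, so your forward direction should note that this degenerate case is trivial (the set $\{[a,b]_M : b\in A\}$ is a singleton); the paper's contrapositive handles it automatically.
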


\begin{proof}
(1)\,$\Rightarrow$\,(2).  We prove the contrapositive.  Suppose that for some $a\in A$ the set $\{[a, b]_M : b\in A\}$ is infinite.  Let $\sim$ be a finite index congruence on $A.$  Then there exist some $b, c\in A$ such that $b\sim c$ and $[a, b]_M\neq[a, c]_M.$  Therefore, there exists some $m\in M$ such that 
$$m\in([a, b]_M\cup[a, c]_M)\!\setminus\!([a, b]_M\cap[a, c]_M).$$  
We may assume without loss of generality that $m\in [a, b]_M\!\setminus\![a, c]_M.$  It follows that
$$a=bm\sim cm\neq a.$$
Hence, we cannot separate $a$ from $A\!\setminus\!\{a\}$ in a finite index congruence, so $A$ is not completely separable.\par
(2)\,$\Rightarrow$\,(1).  Let $a\in A.$  Define a relation $\sim_a$ on $A$ by $$b\sim_ac\iff[a, b]_M=[a, c]_M.$$
Clearly $\sim_a$ is an equivalence relation, and it has finite index since the set $\{[a, b]_M : b\in A\}$ is finite.  Let $b\sim_ac$ and $m\in M.$  Then $[a, b]_M=[a, c]_M.$  For each $n\in M$ we have 
\begin{align*}
n\in [a, bm]_M&\iff a=(bm)n=b(mn)\iff mn\in[a, b]_M=[a, c]_M\\&\iff a=c(mn)=(cm)n\\&\iff n\in[a, cm]_M,
\end{align*}
so $[a, bm]_M=[a, cm]_M$ and hence $bm\sim_acm.$  Thus $\sim_a$ is a congruence on $A.$  Now, observe that $1\in[a, b]_M$ if and only if $a=b,$ so $[a]_{\sim_a}=\{a\}.$  Since $a$ was chosen arbitrarily, we have proven that $A$ is completely separable.
\end{proof}

Kozhukhov proved that every act over a finite monoid is residually finite \cite[Proposition 3]{Kozhukhov1}.  Since a finite monoid has only finitely many subsets, we deduce from Theorem \ref{CS condition} that every act over a finite monoid is in fact completely separable (and hence satisfies the other three separability conditions).

\begin{corollary}
\label{finite monoid}
Let $M$ be a finite monoid.  Then every $M$-act is completely separable.
\end{corollary}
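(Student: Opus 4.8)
The plan is to derive Corollary \ref{finite monoid} directly from Theorem \ref{CS condition}, exactly as the surrounding text suggests. The key observation is that complete separability of an $M$-act $A$ has been reduced, by Theorem \ref{CS condition}, to a purely set-theoretic finiteness statement: for each $a\in A$, the collection $\{[a,b]_M : b\in A\}$ of subsets of $M$ must be finite. Since each set $[a,b]_M=\{m\in M : a=bm\}$ is by definition a subset of $M$, the whole collection $\{[a,b]_M : b\in A\}$ is a subset of the power set $\mathcal{P}(M)$.

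First I would fix an arbitrary $M$-act $A$ and an arbitrary element $a\in A$. Then I would note that $\{[a,b]_M : b\in A\}\subseteq\mathcal{P}(M)$. Because $M$ is finite, the power set $\mathcal{P}(M)$ is finite, with $|\mathcal{P}(M)|=2^{|M|}$. Consequently any subcollection of $\mathcal{P}(M)$ is finite, and in particular $\{[a,b]_M : b\in A\}$ is finite. Since $a$ was arbitrary, the finiteness condition in Theorem \ref{CS condition}(2) holds for every $a\in A$, and therefore $A$ is completely separable. As $A$ was an arbitrary $M$-act, every $M$-act is completely separable.

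I do not anticipate any genuine obstacle here: the entire content of the corollary has already been absorbed into Theorem \ref{CS condition}, and what remains is the trivial remark that a family of subsets of a finite set is finite. The only thing to be careful about is not to conflate the (possibly infinite) act $A$ with the finite monoid $M$ — the index $b$ ranges over the infinite set $A$, but the resulting sets $[a,b]_M$ live in the finite power set $\mathcal{P}(M)$, so the map $b\mapsto[a,b]_M$ need not be injective and its image is necessarily finite. This is precisely why finiteness of $M$ alone suffices regardless of the cardinality of $A$.
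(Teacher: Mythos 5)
Your proof is correct and is exactly the paper's own argument: the paper deduces the corollary from Theorem \ref{CS condition} by noting that a finite monoid has only finitely many subsets, so each collection $\{[a,b]_M : b\in A\}$ is automatically finite. Your additional remark about the map $b\mapsto[a,b]_M$ not being injective is a fine clarification but adds nothing beyond the paper's reasoning.
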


From Corollaries \ref{group}(3) and \ref{finite monoid}, we obtain:

\begin{corollary}
\label{all G-acts CS}
Let $G$ be a group.  Then every $G$-act is completely separable if and only if $G$ is finite.
\end{corollary}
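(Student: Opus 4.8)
Let $G$ be a group. Then every $G$-act is completely separable if and only if $G$ is finite.

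Let me figure out how to prove this.The plan is to obtain both implications as immediate consequences of two results already established, with no new construction required. The statement is a biconditional, so I would treat the two directions separately, each citing a single prior corollary.

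For the reverse implication, I would assume $G$ is finite and simply note that a group is in particular a monoid, so a finite group is a finite monoid. Corollary \ref{finite monoid} then applies verbatim and yields that every $G$-act is completely separable. This direction is entirely immediate.

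For the forward implication, I would assume that every $G$-act is completely separable and apply this hypothesis to one distinguished act, namely $\textbf{G}$ (the group $G$ acting on itself by right multiplication). By Corollary \ref{group}(3), the act $\textbf{G}$ is completely separable if and only if it is finite; since the assumption guarantees $\textbf{G}$ is completely separable, we conclude $\textbf{G}$ is finite. The only point worth spelling out is that the underlying set of $\textbf{G}$ is $G$ itself, so finiteness of $\textbf{G}$ is precisely finiteness of the group $G$.

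There is essentially no obstacle to overcome, since the substantive work was carried out in proving Corollaries \ref{group}(3) and \ref{finite monoid}. The only items to verify are the two trivial identifications just mentioned: that a finite group is a finite monoid (so that Corollary \ref{finite monoid} is applicable), and that ``$\textbf{G}$ is finite'' is synonymous with ``$G$ is finite'' (so that Corollary \ref{group}(3) closes the forward direction).
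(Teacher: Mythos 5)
Your proof is correct and follows exactly the paper's route: the paper derives this corollary immediately from Corollary \ref{group}(3) together with Corollary \ref{finite monoid}, which is precisely the two-implication argument you give. The minor identifications you spell out (a finite group is a finite monoid, and finiteness of $\textbf{G}$ means finiteness of $G$) are indeed all that is needed.
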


Theorem \ref{CS condition}, together with Corollary \ref{monoid separability}(4), yields the following characterisation of complete separability for monoids.

\begin{corollary}
\label{CS monoid}
A monoid $M$ is completely separable if and only if for each $a\in M$ the set $\{[a, b] : b\in M\}$ is finite.
\end{corollary}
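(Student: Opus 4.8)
The plan is to specialise Theorem \ref{CS condition} to the particular $M$-act $\textbf{M}$ and then feed the resulting characterisation into Corollary \ref{monoid separability}(4). Concretely, I would set $A=\textbf{M}$ in Theorem \ref{CS condition}. The underlying set of $\textbf{M}$ is $M$ itself, and its act action is right multiplication in $M,$ so for any $a, b\in M$ the set $[a, b]_{\textbf{M}}=\{m\in M : a=bm\}$ is precisely the set $[a, b]$ introduced in Definition \ref{CS defn} (this notational identification is exactly the convention recorded there for the case $A=\textbf{M}$). Hence Theorem \ref{CS condition} reads, in this instance, as the statement that $\textbf{M}$ is completely separable if and only if for each $a\in M$ the collection $\{[a, b] : b\in M\}$ is finite.

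It then remains to invoke Corollary \ref{monoid separability}(4), which asserts that the $M$-act $\textbf{M}$ is completely separable if and only if the monoid $M$ is completely separable. Chaining these two equivalences immediately gives the desired conclusion: $M$ is completely separable precisely when $\{[a, b] : b\in M\}$ is finite for each $a\in M.$ I do not anticipate any genuine obstacle, as the result is a direct combination of two already-established facts; the only point requiring a moment's care is the identification of $[a, b]_{\textbf{M}}$ with $[a, b],$ which is handled by the notation fixed in Definition \ref{CS defn}.
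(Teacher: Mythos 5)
Your proposal is correct and follows exactly the route the paper takes: specialise Theorem \ref{CS condition} to $A=\textbf{M}$ (using the notational convention $[a,b]_{\textbf{M}}=[a,b]$ from Definition \ref{CS defn}) and then chain with Corollary \ref{monoid separability}(4). Nothing further is needed.
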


\section{\large{Finiteness Conditions on Monoids}\nopunct}
\label{sec:FC}

A {\em finiteness condition} for a class of universal algebras is a property that is satisfied by at least all finite members of that class.  In the remainder of the paper we study several natural finiteness conditions on monoids arising from separability in their acts.  Specifically, given any of the four separability conditions $\mathcal{C},$ we investigate which monoids have the property that all their acts satisfy $\mathcal{C},$ and which monoids satisfy the weaker condition that all their finitely generated acts satisfy $\mathcal{C}.$  The fact that these are finiteness conditions follows from Corollary \ref{finite monoid}.\par
It turns out that the property that all finitely generated acts satisfy $\mathcal{C}$ is in fact equivalent to the property that all {\em cyclic} acts satisfy $\mathcal{C}.$

\begin{thm}
\label{all cyclic acts}
Let $\mathcal{C}$ be any of the four separability conditions.  For a monoid $M,$ all finitely generated $M$-acts satisfy $\mathcal{C}$ if and only if all cyclic $M$-acts satisfy $\mathcal{C}.$
\end{thm}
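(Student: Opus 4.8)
The forward direction is immediate: every cyclic act is finitely generated, so if all finitely generated $M$-acts satisfy $\mathcal{C}$ then in particular all cyclic ones do. The substance is the converse, which I would prove by exploiting that a finitely generated act $A$ is a \emph{finite} union of cyclic subacts $A=\langle u_1\rangle\cup\dots\cup\langle u_n\rangle$.

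Complete separability is the cleanest case, and I would dispatch it first using the characterisation in Theorem \ref{CS condition}. Fix $a\in A$; it suffices to show $\{[a,b]_M:b\in A\}$ is finite. If $[a,b]_M\neq\emptyset$, then $a=bm$ for some $m$, whence $a\in\langle b\rangle\subseteq\langle u_j\rangle$ for any index $j$ with $b\in\langle u_j\rangle$; thus every $b$ contributing a non-empty set lies in one of the finitely many cyclic subacts $\langle u_j\rangle$ that contain $a$. For each such $j$ the set $[a,b]_M$ is computed inside $\langle u_j\rangle$ (the action is ambient), so $\{[a,b]_M:b\in\langle u_j\rangle\}$ is finite by Theorem \ref{CS condition} applied to the cyclic act $\langle u_j\rangle$. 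A finite union of finite sets (together with $\emptyset$) is finite, and complete separability of $A$ follows.

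For the remaining three conditions I would first reduce each to the separation of a \emph{single pair of points} in a finitely generated act. Residual finiteness is already of this form, while Propositions \ref{WSS condition} and \ref{SSS condition} recast weak and strong subact separability as the separation of a point $a$ from the zero $0_B$ in the finitely generated Rees quotient $A/B$; note that separating a point from a zero is itself an instance of the weak/strong condition, namely separation from the singleton subact $\{0_B\}$, so the induction below stays within the hypothesis. The key structural fact I would use is that every cyclic subact of a Rees quotient $A/B$ is again a cyclic $M$-act, hence covered by the hypothesis. This lets me run an induction on the number $n$ of generators: to separate two points I collapse a suitable cyclic subact $\langle u_i\rangle$ to a zero via the (always valid) Rees quotient $A\to A/\langle u_i\rangle$, reducing to a separation problem in an act with a smaller generating set whose cyclic subacts still satisfy $\mathcal{C}$, and then pull the resulting finite quotient back along $A\to A/\langle u_i\rangle$.

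The main obstacle is exactly what the Rees-quotient manoeuvre is designed to avoid: a separating homomorphism produced on a cyclic subact $\langle u_i\rangle$ cannot in general be extended to $A$, because the complement $A\setminus\langle u_i\rangle$ need not be a subact---elements outside $\langle u_i\rangle$ may act into it---so the ``collapse the complement to a zero'' device used in the proof of Proposition \ref{disjoint union} is unavailable. Passing to Rees quotients, which collapse a subact rather than its complement, sidesteps this, at the cost of having to bookkeep the generating sets carefully so that the induction genuinely descends; handling the newly created zeros, and the case in which the two points to be separated lie in exactly the same cyclic subacts, is where the care is required. Throughout, the finitely many separations obtained can be amalgamated by intersecting their kernels, exactly as in the proof of Proposition \ref{WSS=CSS}, since a finite intersection of finite-index congruences again has finite index.
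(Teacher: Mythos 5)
Your forward direction and your treatment of complete separability are correct, and the latter is a genuinely different route from the paper's: using Theorem \ref{CS condition} together with the observation that any $b$ with $[a,b]_M\neq\emptyset$ lies in one of the finitely many cyclic subacts $\langle u_j\rangle$ containing $a$ dispatches that case directly, whereas the paper folds complete separability into the same induction as the other three conditions. Your reductions for the remaining conditions (residual finiteness is already a two-point separation; Propositions \ref{WSS condition} and \ref{SSS condition} turn weak and strong subact separability into separating a point from $0_B$ in $A/B$) are also sound.

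The gap is in the inductive step for those three conditions. Your only tool there is the Rees quotient $A\to A/\langle u_i\rangle$, and the case you yourself flag as requiring care --- the two points to be separated lying in exactly the same cyclic subacts --- is precisely the case where that tool provably fails, and it is where essentially all the work in the paper's proof lies. Concretely, suppose $a$ and $b$ (or $a$ and the zero $0$) both lie in $\langle u_i\rangle$ for \emph{every} generator $u_i$. Then collapsing any $\langle u_i\rangle$ sends both points to $0_B$, so no congruence pulled back along $A\to A/\langle u_i\rangle$ can separate them; and collapsing a subact avoiding both points cannot remove any generator, so the induction does not descend. No bookkeeping fixes this; a new construction is needed. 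The paper supplies it (its case (3)): a separating homomorphism $\theta:\langle u_i\rangle\to C$, which exists because $\langle u_i\rangle$ is cyclic, is extended to a homomorphism $\theta':A\to C'$ with $C'=C\cup(A\setminus\langle u_i\rangle)$, acting as the identity outside $\langle u_i\rangle$ --- the dual of the disjoint-union trick, legitimate exactly because $\langle u_i\rangle$ is a subact (elements outside it may act into it, and one then applies $\theta$). The subact $B'$ of $C'$ generated by the images of the remaining generators satisfies $\mathcal{C}$ by the inductive hypothesis and contains the images of both points (each lies in some $\langle u_j\rangle$ with $j\neq i$), so one separates again inside $B'$ and performs a second such extension; the resulting image is finite because $C'\setminus B'\subseteq C$. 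Without this partial-collapse construction, or an equivalent device, your induction cannot close.
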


\begin{proof}
The direct implication is obvious, so we just need to prove the converse.  We prove by induction on the number of generators.  Let $A$ be an $M$-act with a finite generating $X$ where $|X|>1,$ and assume that all $M$-acts with a smaller generating set satisfy $\mathcal{C}.$  Clearly we may assume that $X$ is a minimal generating set for $A.$  If there exists $x\in X$ such that $A\!\setminus\!\langle x\rangle$ is  a subact, then $\langle x\rangle$ and $A\!\setminus\!\langle x\rangle=\langle X\!\setminus\!\{x\}\rangle$ satisfy $\mathcal{C}$ by the inductive hypothesis, and hence $A=\langle x\rangle\cup(A\!\setminus\!\langle x\rangle)$ satisfies $\mathcal{C}$ by Proposition \ref{disjoint union}.  We may assume then that for each $x\in X$ there exists $y\in X$ such that $\langle x\rangle\cap\langle y\rangle\neq\emptyset.$\par
Now, let $U\subseteq A$ be of the type associated with $\mathcal{C}$ and let $a\in A\!\setminus\!U.$  (For residual finiteness, we assume that $U$ is a finite subset.  For weak subact separability and strong subact separability, by Propositions \ref{WSS condition} and \ref{SSS condition}, respectively, it suffices to consider the case that $U$ is a trivial subact.)  There are three cases to consider.\par 
(1) Suppose first that there exists $x\in X$ such that $a\notin\langle x\rangle.$  Let $B=\langle x\rangle.$  Then the Rees quotient $A/B=\langle X\!\setminus\!\{x\}\rangle$ satisfies $\mathcal{C}$ by the inductive hypothesis.  Clearly $U^{\prime}=(U\setminus\!B)\cup\{0_B\}\subseteq A/B$ is of the type associated with $\mathcal{C}.$  Therefore, there exist a finite $M$-act $C$ and an $M$-homomorphism $\theta : A/B\to C$ such that $a\theta\notin U^{\prime}\theta.$  Letting $\phi : A\to A/B$ be the canonical $M$-homomorphism, we have that $\phi\circ\theta : A\to C$ separates $a$ from $U.$\par
(2) Now assume that $a\in\langle x\rangle$ for each $x\in X,$ and suppose that $U\cap\langle y\rangle=\emptyset$ for some $y\in X.$  Let $B=\langle y\rangle.$  Then $a\in B.$  The Rees quotient $A/B=\langle X\!\setminus\!\{y\}\rangle$ satisfies $\mathcal{C}$ by the inductive hypothesis.  Therefore, there exist a finite $M$-act $C$ and an $M$-homomorphism $\theta : A/B\to C$ such that $0_B\theta\notin U\theta.$  Again, letting $\phi : A\to A/B$ be the canonical $M$-homomorphism, we have that $\phi\circ\theta : A\to C$ separates $a$ from $U.$\par
(3) Finally, assume that $a\in\langle x\rangle$ and $U\cap\langle x\rangle\neq\emptyset$ for each $x\in X.$  Fix an $x\in X,$ let $B=\langle x\rangle,$ and let $V=U\cap B.$  Clearly $V$ is of the type associated with $\mathcal{C}.$  Since $B$ is cyclic, it satisfies $\mathcal{C}$ by assumption.  Therefore, there exist a finite $M$-act $C$ and an $M$-homomorphism $\theta : B\to C$ such that $a\theta\notin V\theta.$  Let $C^{\prime}=C\cup(A\!\setminus\!B).$  Extend the action of $M$ on $C$ to $C^{\prime}$ by setting
$$a^{\prime}\cdot m=
\begin{cases}
a^{\prime}m&\text{if }a^{\prime}m\in A\!\setminus\!B,\\ 
(a^{\prime}m)\theta&\text{if }a^{\prime}m\in B
\end{cases}$$
for all $a^{\prime}\in A\!\setminus\!B$ and $m\in M.$  It can be easily checked that $C^{\prime}$ is an $M$-act under this action.  We now define an $M$-homomorphism 
$$\theta^{\prime} : A\to C^{\prime}, a^{\prime}\mapsto
\begin{cases}
a^{\prime}&\text{if }a^{\prime}\in A\!\setminus\!B,\\ 
a^{\prime}\theta&\text{if }a^{\prime}\in B.
\end{cases}$$
Let $B^{\prime}$ be the subact of $C^{\prime}$ generated by $(X\!\setminus\!\{x\})\theta^{\prime}.$  Then $B^{\prime}$ satisfies $\mathcal{C}$ by the inductive hypothesis.  We have that $a\theta\in B^{\prime}$ and $U\theta\cap B^{\prime}\neq\emptyset.$  Let $U^{\prime}=U\theta\cap B^{\prime}.$  Then $U^{\prime}$ is of the type associated with $\mathcal{C}.$  Therefore, there exist a finite $M$-act $D$ and an $M$-homomorphism $\phi : B^{\prime}\to D$ such that $(a\theta)\phi\notin U^{\prime}\phi.$  In a similar way as above, let $D^{\prime}=D\cup(C^{\prime}\!\setminus\!B^{\prime})$ and extend the action of $M$ on $D$ to $D^{\prime}$ by setting
$$c^{\prime}\ast m=
\begin{cases}
c^{\prime}m&\text{if }c^{\prime}m\in C^{\prime}\!\setminus\!B^{\prime},\\ 
(c^{\prime}m)\phi&\text{if }c^{\prime}m\in B^{\prime}
\end{cases}$$
for all $c^{\prime}\in C^{\prime}\!\setminus\!B^{\prime}$ and $m\in M.$  
We define an $M$-homomorphism 
$$\phi^{\prime} : C^{\prime}\to D^{\prime}, c^{\prime}\mapsto
\begin{cases}
c^{\prime}&\text{if }a^{\prime}\in C^{\prime}\!\setminus\!B^{\prime},\\ 
c^{\prime}\phi&\text{if }c^{\prime}\in B^{\prime}.
\end{cases}$$
Now let $\psi=\theta^{\prime}\circ\phi^{\prime} : A\to D^{\prime}.$  We have that $D^{\prime}$ is finite since $D$ and $C^{\prime}\!\setminus\!B^{\prime}\subseteq C$ are finite.  We have that 
$$a\psi=(a\theta^{\prime})\phi^{\prime}=(a\theta)\phi\in D.$$
Consider $u\in U.$  Suppose first that $u\in V.$  Then $a\theta\neq u\theta.$  If $u\theta\notin U^{\prime},$ then $u\theta\in C^{\prime}\!\setminus\!B^{\prime}$ and $u\psi=(u\theta)\phi^{\prime}=u\theta\in C^{\prime}\!\setminus\!B^{\prime},$ so $a\psi\neq u\psi.$  If $u\theta\in U^{\prime},$ then $$u\psi=(u\theta)\phi\neq(a\theta)\phi=a\psi.$$  Now suppose that $u\in U\!\setminus\!V.$  Then $u=u\theta^{\prime}\in B^{\prime},$ so $u\in U^{\prime}.$  Then we have that
$$u\psi=(u\theta^{\prime})\phi^{\prime}=u\phi^{\prime}=u\phi\neq(a\theta)\phi=a\psi.$$
Since $u$ was chosen arbitrarily, we conclude that $a\psi\notin U\psi.$  This completes the proof.
\end{proof}

Theorem \ref{all cyclic acts} (for residual finiteness) and Corollary \ref{cyclic RF} together yield:

\begin{corollary}
\label{all fg acts RF}
The following are equivalent for a monoid $M$:
\begin{enumerate}
\item all finitely generated $M$-acts are residually finite;
\item all cyclic $M$-acts are residually finite;
\item every right congruence on $M$ is the intersection of a family of finite index right congruences on $M.$
\end{enumerate}
\end{corollary}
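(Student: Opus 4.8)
The plan is to obtain the three equivalences by directly combining the two results flagged immediately before the statement, without any fresh argument. The equivalence of (1) and (2) is simply Theorem \ref{all cyclic acts} specialised to the case where $\mathcal{C}$ is residual finiteness: that theorem already asserts that, for a monoid $M,$ all finitely generated $M$-acts are residually finite if and only if all cyclic $M$-acts are.

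For the equivalence of (2) and (3), I would pass through the correspondence between cyclic acts and right congruences. By Proposition \ref{cyclic act, right congruence}, an $M$-act is cyclic precisely when it is isomorphic to $\textbf{M}/\rho$ for some right congruence $\rho$ on $M,$ and every right congruence $\rho$ on $M$ arises in this way. Since residual finiteness is preserved under isomorphism, the assertion that \emph{all} cyclic $M$-acts are residually finite is equivalent to the assertion that $\textbf{M}/\rho$ is residually finite for \emph{every} right congruence $\rho$ on $M.$ Now I would invoke Corollary \ref{cyclic RF}, which states that, for a fixed right congruence $\rho,$ the cyclic act $\textbf{M}/\rho$ is residually finite if and only if $\rho$ is the intersection of a family of finite index right congruences on $M.$ Quantifying this biconditional over all $\rho$ yields precisely condition (3), completing the chain (2)\,$\iff$\,(3).

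I do not expect any genuine obstacle here: the substantive work—the induction on the number of generators in Theorem \ref{all cyclic acts} and the congruence-intersection characterisation in Corollary \ref{cyclic RF}—has already been carried out. The only points requiring a moment's care are the bookkeeping in the middle step: one must note that the correspondence of Proposition \ref{cyclic act, right congruence} is onto the set of right congruences (so that quantifying over all cyclic acts is the same as quantifying over all right congruences) and that residual finiteness is an isomorphism invariant (so that it suffices to test the representatives $\textbf{M}/\rho$). Both observations are immediate, so the corollary follows at once.
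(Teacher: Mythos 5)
Your proposal is correct and follows exactly the paper's own route: the paper likewise derives this corollary by combining Theorem \ref{all cyclic acts} (specialised to residual finiteness) with Corollary \ref{cyclic RF}, the latter resting on the correspondence of Proposition \ref{cyclic act, right congruence}. Your additional remarks on surjectivity of that correspondence and isomorphism-invariance of residual finiteness are sound bookkeeping that the paper leaves implicit.
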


\begin{corollary}
\label{fg acts, quotients}
Let $M$ be a monoid such that every right congruence on $M$ is a (two-sided) congruence.  Then the following are equivalent:
\begin{enumerate}
\item all finitely generated $M$-acts are residually finite;
\item every congruence on $M$ is the intersection of a family of finite index right congruences on $M$;
\item every quotient of $M$ is residually finite.
\end{enumerate}
\end{corollary}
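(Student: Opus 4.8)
The plan is to reduce all three conditions to a single statement phrased in terms of right congruences, and then to use the standing hypothesis only at the one point where the distinction between right and two-sided congruences actually matters. The two engines are already available: Corollary \ref{all fg acts RF} rewrites condition (1) entirely in the language of right congruences, while Theorem \ref{monoid quotient separability}(1) combined with Corollary \ref{cyclic RF} rewrites residual finiteness of a single monoid quotient in that same language. The hypothesis that every right congruence on $M$ is two-sided is precisely what lets us identify the collection of right congruences with the collection of (two-sided) congruences.

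For (1)\,$\Leftrightarrow$\,(2): By Corollary \ref{all fg acts RF}, condition (1) is equivalent to the assertion that every right congruence on $M$ is the intersection of a family of finite index right congruences. Since every two-sided congruence is in particular a right congruence, this assertion immediately yields (2). Conversely, the hypothesis guarantees that every right congruence is two-sided, so condition (2) — which quantifies over all two-sided congruences — in fact quantifies over all right congruences, and therefore returns the equivalent form of (1). Hence (1) and (2) coincide.

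For (2)\,$\Leftrightarrow$\,(3): Fix a (two-sided) congruence $\rho$ on $M$; then $\rho$ is in particular a right congruence and $\textbf{M}/\rho$ is a cyclic $M$-act. By Theorem \ref{monoid quotient separability}(1), the monoid $M/\rho$ is residually finite if and only if the $M$-act $\textbf{M}/\rho$ is residually finite, and by Corollary \ref{cyclic RF} the latter holds if and only if $\rho$ is the intersection of a family of finite index right congruences on $M$. Quantifying over all congruences $\rho$ shows that (3), namely that every quotient of $M$ is residually finite, is equivalent to (2). I note that this half of the argument does not use the hypothesis at all, since (2) and (3) are both naturally expressed using finite index right congruences.

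The only place the hypothesis is genuinely needed is the implication (2)\,$\Rightarrow$\,(1): without it, (2) controls only the two-sided congruences, whereas the equivalent form of (1) from Corollary \ref{all fg acts RF} demands that \emph{every} right congruence be an intersection of finite index right congruences. Accordingly, the sole obstacle is the modest bookkeeping of right versus two-sided congruences; once one records that the hypothesis collapses this distinction, the corollary follows simply by assembling the cited results, with no further computation required.
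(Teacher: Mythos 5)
Your proof is correct, and it reorganises the equivalences in a way that differs from the paper's own argument. The paper proves (1)\,$\Leftrightarrow$\,(2) exactly as you do (via Corollary \ref{all fg acts RF} plus the hypothesis), but it then handles (3) by two separate implications tied to condition (1): the implication (1)\,$\Rightarrow$\,(3) is quoted from Corollary \ref{all quotients RF}, and for (3)\,$\Rightarrow$\,(1) it takes an arbitrary cyclic act $A\cong\textbf{M}/\rho$, invokes the hypothesis to upgrade the right congruence $\rho$ to a two-sided congruence so that $M/\rho$ is a genuine monoid quotient, applies Theorem \ref{monoid quotient separability} to transfer residual finiteness from the monoid $M/\rho$ to the act, and finishes with Theorem \ref{all cyclic acts}; thus the paper uses the hypothesis in \emph{both} (1)\,$\Leftrightarrow$\,(2) and (3)\,$\Rightarrow$\,(1). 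You instead prove (2)\,$\Leftrightarrow$\,(3) directly, by fixing a two-sided congruence $\rho$ and chaining Theorem \ref{monoid quotient separability}(1) with Corollary \ref{cyclic RF}, which requires no hypothesis at all since a two-sided congruence is automatically a right congruence. Both routes rest on the same underlying machinery, but yours buys a genuine (if modest) refinement that the paper leaves implicit: the equivalence (2)\,$\Leftrightarrow$\,(3) holds for \emph{every} monoid, and the standing hypothesis is needed only to attach condition (1) to the other two. The paper's decomposition, by contrast, keeps the act-theoretic content ((3)\,$\Rightarrow$\,(1) via cyclic acts) in the foreground, which is perhaps more in the spirit of how the corollary is applied later (e.g.\ in Theorem \ref{fg comm}), where conclusion (1) is the target.
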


\begin{proof}
(1)\,$\Leftrightarrow$\,(2) follows immediately from Corollary \ref{all fg acts RF}.
The implication (1)\,$\Rightarrow$\,(3) follows from Corollary \ref{all quotients RF}.
 For (3)\,$\Rightarrow$\,(1), let $A$ be any cyclic $M$-act.  By Proposition \ref{cyclic act, right congruence} and the assumption that every right congruence on $M$ is a congruence, we have that $A\cong\textbf{M}/\rho$ for some congruence $\rho$ on $M.$  Since $M/\rho$ is residually finite, it follows from Theorem \ref{monoid quotient separability} that $A$ is residually finite.  Hence, by Theorem \ref{all cyclic acts}, all finitely generated $M$-acts are residually finite.
\end{proof}

Similarly, from Theorems \ref{monoid quotient separability} and \ref{all cyclic acts} we deduce:

\begin{corollary}
\label{all fg acts CS}
Let $M$ be a monoid such that every right congruence on $M$ is a (two-sided) congruence.  Then the following are equivalent:
\begin{enumerate}  
\item all finitely generated $M$-acts are completely separable;
\item all cyclic $M$-acts are completely separable;
\item every quotient of $M$ is completely separable.
\end{enumerate}
\end{corollary}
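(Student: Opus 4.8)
The plan is to mirror the proof of the analogous residual-finiteness result, Corollary \ref{fg acts, quotients}, since the only special feature of residual finiteness used there was the availability of the corresponding cyclic-act characterisation, and for complete separability the required ingredients (Theorems \ref{monoid quotient separability} and \ref{all cyclic acts}, Corollary \ref{all quotients RF}, and Proposition \ref{cyclic act, right congruence}) are all in hand. I would establish the implications in the cycle $(1)\Leftrightarrow(2)\Rightarrow(3)\Rightarrow(2)$, which suffices.

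First, I would dispatch $(1)\Leftrightarrow(2)$ directly by invoking Theorem \ref{all cyclic acts} with $\mathcal{C}$ taken to be complete separability: that theorem states precisely that all finitely generated $M$-acts satisfy $\mathcal{C}$ if and only if all cyclic $M$-acts do. Note that this equivalence does not even require the standing hypothesis that every right congruence on $M$ is two-sided; it holds for every monoid.

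Next, $(2)\Rightarrow(3)$ is immediate from Corollary \ref{all quotients RF}, whose complete-separability clause says that if every cyclic $M$-act is completely separable then every quotient of $M$ is completely separable. For the remaining implication $(3)\Rightarrow(2)$ I would take an arbitrary cyclic $M$-act $A$; by Proposition \ref{cyclic act, right congruence} we have $A\cong\textbf{M}/\rho$ for some right congruence $\rho$ on $M$, and here is the one point where the hypothesis enters: since every right congruence on $M$ is assumed to be a (two-sided) congruence, $\rho$ is a congruence, so the quotient monoid $M/\rho$ is defined and, by $(3)$, is completely separable. Theorem \ref{monoid quotient separability}(4) then gives that the $M$-act $\textbf{M}/\rho\cong A$ is completely separable, whence every cyclic $M$-act is completely separable, which is $(2)$.

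I do not expect any genuine obstacle, as the argument is a routine assembly of the cited results; the only subtlety worth flagging is the role of the hypothesis, which is used solely to guarantee in $(3)\Rightarrow(2)$ that the right congruence $\rho$ arising from a cyclic act yields a bona fide monoid quotient $M/\rho$ to which clause $(3)$ and Theorem \ref{monoid quotient separability}(4) can be applied. Everything else goes through verbatim as in Corollary \ref{fg acts, quotients}, with complete separability replacing residual finiteness throughout.
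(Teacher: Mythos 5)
Your proposal is correct and follows essentially the same route as the paper, which deduces this corollary ``similarly'' to Corollary \ref{fg acts, quotients} by combining Theorem \ref{all cyclic acts} (for the equivalence of (1) and (2)), Corollary \ref{all quotients RF} (for (2)$\Rightarrow$(3)), and Proposition \ref{cyclic act, right congruence} together with Theorem \ref{monoid quotient separability}(4) (for (3)$\Rightarrow$(2)). Your identification of the hypothesis's sole role --- guaranteeing that the right congruence arising from a cyclic act is two-sided so that the monoid quotient exists --- matches the paper's argument exactly.
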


We now present several equivalent characterisations of the property that all acts are strongly subact separable; in particular, we show that this is equivalent to all acts being {\em weakly} subact separable.

\begin{prop}
\label{all acts SSS}
The following are equivalent for a monoid $M$:
\begin{enumerate}
\item all $M$-acts are strongly subact separable;
\item all $M$-acts are weakly subact separable;
\item all $M$-acts satisfy the separability condition with respect to the collection of all finite subacts;
\item all $M$-acts satisfy the separability condition with respect to the collection of all trivial subacts;
\item for any $M$-act $A$ containing zeroes, any zero $0\in A$ and any $a\in A\!\setminus\!\{0\},$ there exist a finite $M$-act $C$ and an $M$-homomorphism $\theta : A\to C$ such that $a\theta\neq 0\theta.$
\end{enumerate}
\end{prop}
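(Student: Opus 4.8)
The plan is to prove the chain of implications $(1)\Rightarrow(2)\Rightarrow(3)\Rightarrow(4)\Rightarrow(5)\Rightarrow(1)$. Several of these are either immediate from the definitions or follow from inclusions among the relevant collections of subsets, so the real work is concentrated in two places: passing from the special case of trivial subacts back to arbitrary subacts, and the final implication $(5)\Rightarrow(1)$.

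The easy links come first. The implication $(1)\Rightarrow(2)$ is Proposition~\ref{algebra separability}(2). For $(2)\Rightarrow(3)$, note that every finite subact is finitely generated, so weak subact separability immediately gives separability with respect to finite subacts. For $(3)\Rightarrow(4)$, observe that a trivial subact $\{z\}$ (where $z$ is a zero) is in particular a finite subact, so $(3)$ specialises directly to $(4)$. The implication $(4)\Rightarrow(5)$ is essentially a restatement: a zero $0$ in an act $A$ is precisely an element whose singleton is a trivial subact, and separating $a$ from $\{0\}$ in a finite image means exactly producing $\theta$ with $a\theta\neq 0\theta$; so $(5)$ is just $(4)$ phrased for the special case where the trivial subact is a designated zero, which is all a trivial subact can be.

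The substantive step is $(5)\Rightarrow(1)$. Here I would take an arbitrary $M$-act $A$, a subact $B\subseteq A$, and an element $a\in A\setminus B$, and reduce to the hypothesis of $(5)$ by passing to the Rees quotient. By Proposition~\ref{SSS condition}, to establish strong subact separability it suffices to find a finite $M$-act $C$ and an $M$-homomorphism $\theta:A/B\to C$ with $a\theta\neq 0_B\theta$. Now $0_B$ is a zero of the act $A/B$, and $a$ (identified with its $\rho_B$-class) is an element of $A/B$ distinct from $0_B$. Applying hypothesis $(5)$ to the act $A/B$, its zero $0_B$, and the element $a$ yields exactly such a finite act $C$ and homomorphism $\theta$ separating $a$ from $0_B$. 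By Proposition~\ref{SSS condition} this shows $A$ is strongly subact separable, completing the cycle.

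The main obstacle to watch for is ensuring that the Rees quotient $A/B$ genuinely falls under hypothesis $(5)$: one must verify that $0_B$ is a zero of $A/B$ (which is immediate, since $Bm\subseteq B$ gives $0_B m=0_B$ for all $m\in M$) and that the element $a\in A\setminus B$ survives as an element distinct from $0_B$ in the quotient, so that $(5)$ is being invoked on a genuine element-versus-zero pair rather than vacuously. Once the translation through Proposition~\ref{SSS condition} is set up correctly, the argument is routine; the cleverness lies entirely in recognising that strong subact separability, via Rees quotients, collapses to the apparently far weaker condition $(5)$ of separating a single element from a single zero.
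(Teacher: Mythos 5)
Your proposal is correct and follows essentially the same route as the paper: the implications $(1)\Rightarrow(2)\Rightarrow(3)\Rightarrow(4)\Rightarrow(5)$ are dispatched as immediate consequences of inclusions among the relevant collections of subacts, and $(5)\Rightarrow(1)$ is proved by passing to the Rees quotient $A/B$, noting that $0_B$ is a zero of $A/B$ with $a\neq 0_B$, and then appealing to Proposition \ref{SSS condition}. The paper's own proof is exactly this argument, stated slightly more tersely.
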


\begin{proof}
The implications (1)\,$\Rightarrow$\,(2), (2)\,$\Rightarrow$\,(3), (3)\,$\Rightarrow$\,(4) and (4)\,$\Rightarrow$\,(5) are obvious.\par
(5)\,$\Rightarrow$\,(1).  Let $A$ be an $M$-act, let $B$ be a subact of $A,$ and let $a\in A\!\setminus\!B.$  Then $a\neq 0_B$ in $A/B.$  Therefore, by assumption, there exist a finite $M$-act $C$ and an $M$-homomorphism $\theta : A/B\to C$ such that $a\theta\neq 0_B\theta.$  Hence, by Proposition \ref{SSS condition}, the $M$-act $A$ is strongly subact separable.
\end{proof}

If a monoid $M$ has the property that all its acts are residually finite, then it clearly satisfies condition (5) of Proposition \ref{all acts SSS}, so we deduce:

\begin{corollary}
\label{all RF implies all SSS}
Let $M$ be a monoid.  If all $M$-acts are residually finite, then all $M$-acts are strongly subact separable (and hence weakly subact separable).
\end{corollary}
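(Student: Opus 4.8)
The plan is to invoke Proposition~\ref{all acts SSS}, which reduces the statement to checking that the hypothesis forces condition~(5) of that proposition. This is exactly the reduction announced in the sentence immediately preceding the corollary, so the substantive work has already been done; what remains is essentially a one-line verification.

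Concretely, I would proceed as follows. Suppose that all $M$-acts are residually finite. To establish condition~(5) of Proposition~\ref{all acts SSS}, let $A$ be an $M$-act containing zeroes, let $0\in A$ be a zero, and let $a\in A\!\setminus\!\{0\}.$ Then $a$ and $0$ are distinct elements of $A.$ Since $A$ is residually finite by hypothesis, there exist a finite $M$-act $C$ and an $M$-homomorphism $\theta:A\to C$ such that $a\theta\neq 0\theta.$ This is precisely the statement of condition~(5). Having verified (5), I would appeal to the implication (5)\,$\Rightarrow$\,(1) of Proposition~\ref{all acts SSS} to conclude that all $M$-acts are strongly subact separable, and the parenthetical ``hence weakly subact separable'' follows either from Proposition~\ref{algebra separability}(2) or directly from the implication (1)\,$\Rightarrow$\,(2) of Proposition~\ref{all acts SSS}.

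There is no genuine obstacle here: the entire content of the corollary is carried by Proposition~\ref{all acts SSS}, and the hypothesis simply specialises residual finiteness (which separates every pair of distinct elements) to the particular case of separating a zero from another element. The only point worth flagging is that condition~(5) restricts attention to acts that contain zeroes, but this causes no difficulty, since residual finiteness is assumed for \emph{all} acts and \emph{all} pairs of distinct elements, and hence a fortiori for a zero and a distinct element.
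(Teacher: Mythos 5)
Your proposal is correct and follows exactly the paper's own argument: the paper derives the corollary by noting that residual finiteness of all $M$-acts immediately yields condition~(5) of Proposition~\ref{all acts SSS}, which is precisely the reduction you carry out. Your write-up merely spells out this one-line verification in more detail.
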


\begin{remark}
(1) It is not true in general that if all $M$-acts are strongly subact separable then all $M$-acts are residually finite.  Indeed, for a group $G,$ every $G$-act is strongly subact separable (Corollary \ref{all G-acts SSS}), but the $G$-act $\textbf{G}$ is residually finite if and only if $G$ is residually finite (Corollary \ref{group}(1)).\par
(2) We cannot replace `strongly subact separable' with `completely separable' in the statement of Corollary \ref{all RF implies all SSS}.  Indeed, for a group $G,$ every $G$-act is completely separable if and only if $G$ is finite (Corollary \ref{all G-acts CS}).  However, by Theorem \ref{all G-acts RF} below there exist infinite groups (such as $\mathbb{Z}$) whose acts are all residually finite. 
\end{remark}

The following result provides equivalent characterisations of the property that all finitely generated $M$-acts are strongly subact separable.

\begin{prop}
\label{all fg acts SSS}
The following are equivalent for a monoid $M$:
\begin{enumerate}
\item all finitely generated $M$-acts are strongly subact separable;
\item all cyclic $M$-acts are strongly subact separable;
\item all finitely generated $M$-acts are weakly subact separable;
\item all cyclic $M$-acts are weakly subact separable;
\item all cyclic $M$-acts satisfy the separability condition with respect to the collection of all finite subacts;
\item all cyclic $M$-acts satisfy the separability condition with respect to the collection of all trivial subacts;
\item for any cyclic $M$-act $A$ containing zeroes, any zero $0\in A$ and any $a\neq 0,$ there exist a finite $M$-act $C$ and an $M$-homomorphism $\theta : A\to C$ such that $a\theta\neq 0\theta.$
\end{enumerate}
\end{prop}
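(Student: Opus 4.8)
The plan is to prove the proposition by exhibiting a single cycle of implications that visits all seven conditions, namely
$$(1)\Rightarrow(3)\Rightarrow(4)\Rightarrow(5)\Rightarrow(6)\Rightarrow(7)\Rightarrow(2)\Rightarrow(1).$$
This parallels the structure of the proof of Proposition \ref{all acts SSS}, the essential new ingredient being that, since we now quantify only over finitely generated (equivalently cyclic) acts, we may invoke Theorem \ref{all cyclic acts} to pass freely between the ``finitely generated'' and ``cyclic'' versions of each property.

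Most of the arrows would be immediate from the definitions. The implication $(1)\Rightarrow(3)$ holds because strong subact separability implies weak subact separability for every act (Proposition \ref{algebra separability}(2)), and $(3)\Rightarrow(4)$ holds simply because every cyclic act is finitely generated. For $(4)\Rightarrow(5)$, I would observe that a finite subact is finitely generated, so weak subact separability already yields separation from finite subacts. The step $(5)\Rightarrow(6)$ is clear since every trivial subact is finite, and $(6)\Rightarrow(7)$ is a direct reformulation: separating an element $a$ from a trivial subact $\{0\}$, where $0$ is a zero, in a finite image is precisely the assertion $a\theta\neq 0\theta.$

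The one substantive step is $(7)\Rightarrow(2)$, after which $(2)\Rightarrow(1)$ follows at once from Theorem \ref{all cyclic acts} applied to strong subact separability. To prove $(7)\Rightarrow(2)$, I would take an arbitrary cyclic act $A,$ a subact $B,$ and an element $a\in A\!\setminus\!B,$ and then form the Rees quotient $A/B.$ The key observations are that $A/B$ is again cyclic (a quotient of a cyclic act is cyclic, since the image of a generator generates the image), that $0_B$ is a zero of $A/B,$ and that $a\neq 0_B$ because $a\notin B.$ Condition (7), applied to the cyclic act $A/B$ with zero $0_B$ and the element $a,$ then supplies a finite $M$-act $C$ and an $M$-homomorphism $\theta : A/B\to C$ with $a\theta\neq 0_B\theta.$ By Proposition \ref{SSS condition} this shows that $A$ is strongly subact separable, establishing (2).

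The main obstacle, such as it is, lies in this last implication, and it amounts to assembling the right objects: recognising that the Rees quotient of a cyclic act is cyclic and carries a distinguished zero, so that the a priori weaker hypothesis (7) about separating elements from zeros can be leveraged, via Proposition \ref{SSS condition}, into full strong subact separability. Everything else reduces to chasing the inclusions between the various collections of subsets together with the fact that strong separability implies weak separability.
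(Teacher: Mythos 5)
Your proposal is correct and follows essentially the same route as the paper: the easy inclusions between the collections of subsets give the chain through (4), (5), (6), (7), Theorem \ref{all cyclic acts} bridges the cyclic and finitely generated versions, and the substantive step is exactly the paper's, namely applying condition (7) to the (cyclic) Rees quotient $A/B$ and invoking Proposition \ref{SSS condition}. The only difference is organisational: you arrange the statements in one cycle and need Theorem \ref{all cyclic acts} only once (for strong separability), whereas the paper uses it twice to get the biconditionals $(1)\Leftrightarrow(2)$ and $(3)\Leftrightarrow(4)$ directly.
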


\begin{proof}
(1)\,$\Leftrightarrow$\,(2) and (3)\,$\Leftrightarrow$\,(4) are restatements of Theorem \ref{all cyclic acts} for strong subact separability and weak subact separability, respectively.  The implications (2)\,$\Rightarrow$\,(4), (4)\,$\Rightarrow$\,(5), (5)\,$\Rightarrow$\,(6) and (6)\,$\Rightarrow$\,(7) are obvious.  The proof of (7)\,$\Rightarrow$\,(1) is essentially the same as that of Proposition \ref{all acts SSS}.  The only difference is that we take $A$ to be cyclic and note that the Rees quotient $A/B$ is also cyclic.
\end{proof}

\begin{corollary}
\label{all cyclic RF implies all fg SSS}
Let $M$ be a monoid.  If all cyclic $M$-acts are residually finite, then all finitely generated $M$-acts are strongly subact separable (and hence weakly subact separable).
\end{corollary}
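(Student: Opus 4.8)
The plan is to deduce this corollary by chaining two facts already established in the excerpt. Corollary \ref{all cyclic RF implies all fg SSS} asserts that if all cyclic $M$-acts are residually finite, then all finitely generated $M$-acts are strongly subact separable. The natural route is to use Corollary \ref{all RF implies all SSS} as the template, but localised to the cyclic/finitely generated setting, and to exploit the equivalences already packaged in Proposition \ref{all fg acts SSS}.

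First I would recall that, by Proposition \ref{all fg acts SSS}, to prove that all finitely generated $M$-acts are strongly subact separable it suffices to verify condition (7): for any cyclic $M$-act $A$ containing zeroes, any zero $0\in A$ and any $a\neq 0$, there exist a finite $M$-act $C$ and an $M$-homomorphism $\theta : A\to C$ with $a\theta\neq 0\theta$. So the whole problem reduces to separating a single element from a zero in a cyclic act, using only finite images.

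Second, I would observe that this is immediate from the hypothesis. Let $A$ be a cyclic $M$-act with a zero $0$, and let $a\in A$ with $a\neq 0$. By assumption $A$ is residually finite, so by the definition of residual finiteness (applied to the distinct pair $a, 0$) there exist a finite $M$-act $C$ and an $M$-homomorphism $\theta : A\to C$ such that $a\theta\neq 0\theta$. This is exactly condition (7) of Proposition \ref{all fg acts SSS}. Hence all finitely generated $M$-acts are strongly subact separable, and by Proposition \ref{algebra separability}(2) they are therefore also weakly subact separable.

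I expect no genuine obstacle here: this is a short deduction, entirely parallel to the proof of Corollary \ref{all RF implies all SSS}, with ``all $M$-acts'' replaced throughout by ``all cyclic (respectively finitely generated) $M$-acts''. The only point requiring a moment's care is confirming that the reduction via Proposition \ref{all fg acts SSS}(7) genuinely lets us restrict attention to cyclic acts and to separating from a \emph{zero}, rather than from an arbitrary subact; but that reduction is precisely the content of that proposition, which has already been proved, so nothing new is needed.
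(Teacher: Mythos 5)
Your proposal is correct and matches the paper's own proof exactly: the paper likewise deduces the corollary from the equivalence of conditions (1) and (7) in Proposition \ref{all fg acts SSS}, noting that residual finiteness of a cyclic act immediately yields the separation of $a$ from a zero required by (7). Your appeal to Proposition \ref{algebra separability}(2) for the parenthetical claim about weak subact separability is a harmless elaboration of what the paper leaves implicit.
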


\begin{proof}
This follows from the equivalence of (1) and (7) in Proposition \ref{all fg acts SSS}, along with the definition of residual finiteness.
\end{proof}

We now exhibit a large class of monoids, including all groups, that have the property that all their acts are strongly subact separable.

\begin{prop}
\label{finitely many R-classes}
Let $M$ be a monoid with finitely many $\mathcal{R}$-classes.  Then all $M$-acts are strongly subact separable.
\end{prop}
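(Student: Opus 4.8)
The plan is to invoke Proposition \ref{all acts SSS}, whose condition (5) reduces the task to the following: given any $M$-act $A$ with a zero $0$ and any $a\in A\!\setminus\!\{0\}$, produce a finite-index congruence on $A$ separating $a$ from $0$. The hypothesis enters through the observation that $M$ has finitely many $\mathcal{R}$-classes precisely when it has only finitely many distinct principal right ideals; I would list these as $I_1,\dots,I_k$, with $I_1=M$.

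Fixing the element $a$, I would define a map $f\colon A\to\mathcal{P}(\{1,\dots,k\})$ by $f(b)=\{\,j : a\in bI_j\,\}$, where $bI_j=\{bm : m\in I_j\}$; since there are only $k$ ideals, $f$ takes finitely many values, so $\ker f=\{(b,c):f(b)=f(c)\}$ is an equivalence relation of finite index. The crux is to show that $\ker f$ is a congruence. For this, note that for any $n\in M$ the set $nI_j$ is again a principal right ideal, hence $nI_j=I_{\phi(n,j)}$ for some index $\phi(n,j)$ depending only on $n$ and $j.$ Using associativity of the action, $(bn)I_j=b(nI_j)=bI_{\phi(n,j)},$ so that
$$f(bn)=\{\,j : \phi(n,j)\in f(b)\,\}.$$
Thus $f(bn)$ is determined by $f(b)$ and $n$ alone; consequently $f(b)=f(c)$ forces $f(bn)=f(cn),$ and $\ker f$ is indeed a congruence.

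It then remains to check separation: taking $j=1$ gives $a=a\cdot 1\in aM=aI_1,$ so $1\in f(a),$ whereas $bI_j=\{0\}$ when $b=0$ gives $f(0)=\emptyset$ since $a\neq 0.$ Hence $f(a)\neq f(0),$ i.e.\ $(a,0)\notin\ker f,$ and the finite quotient $A/\ker f$ separates $a$ from $0,$ completing the argument via Proposition \ref{all acts SSS}. I expect the main obstacle to be precisely the verification that $\ker f$ respects the action; this is the one step where finiteness of the set of principal right ideals is indispensable, as it is what guarantees that right multiplication by $n$ induces a well-defined self-map $\phi(n,-)$ of the finite index set $\{1,\dots,k\}.$
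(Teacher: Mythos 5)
Your proof is correct, and it shares the paper's skeleton: both arguments reduce, via Proposition \ref{all acts SSS}, to separating $a$ from a zero $0$ by a finite-index congruence, and both manufacture that congruence from a finite ``profile'' indexed by the finitely many $\mathcal{R}$-classes (equivalently, principal right ideals). The difference lies in what the profile records, and the two choices are dual. The paper anchors at the zero: for each $\mathcal{R}$-class $R_i$ it records whether $xR_i=\{0\}$ (using the dichotomy that either $xR_i=\{0\}$ or $0\notin xR_i$), and the congruence property follows from $\mathcal{R}$ being a left congruence, i.e.\ $mR_i\subseteq R_j.$ You anchor at the target $a$: your $f(b)$ records which principal right ideals $I_j$ satisfy $a\in bI_j,$ and the congruence property follows from left translation inducing the self-map $j\mapsto\phi(n,j)$ of the finite index set, since $nI_j=(nm_j)M$ is again principal --- essentially the same structural fact about left translations in a different guise. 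Each choice buys something: your congruence separates $a$ not only from $0$ but from every $b$ with $a\notin bM$ (since then $f(b)=\emptyset$ while $1\in f(a)$), whereas the paper's congruence does not depend on $a$ at all, so one and the same finite quotient separates every element of $A\!\setminus\!\{0\}$ from $0$ simultaneously. Both verifications are equally elementary; yours additionally dispenses with the zero-dichotomy observation.
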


\begin{proof}
Let $A$ be an $M$-act with a zero $0,$ and let $a\in A\!\setminus\!\{0\}.$  By Proposition \ref{all acts SSS}, it suffices to prove that we can separate $a$ and $0.$  Let $R_i, i\in I,$ be the $\mathcal{R}$-classes of $M.$  Note that for any $x\in A$ and $i\in I,$ either $xR_i=\{0\}$ or $0\notin xR_i.$  For each $i\in I,$ we define a map 
$$\theta_i : A\!\setminus\!\{0\}\to\{1, 2\}, x\mapsto
\begin{cases}
1&\text{ if }xR_i=\{0\}\\
2&\text{ otherwise.}
\end{cases}$$
We now define an equivalence relation $\sim$ on $A$ by
$$x\sim y\iff 
\begin{cases}
x=y=0,\;\text{or}\\
x, y\in A\!\setminus\!\{0\}\text{ and }x\theta_i=y\theta_i\text{ for each }i\in I.
\end{cases}$$
The relation $\sim$ has finite index since $I$ is finite and $|\text{Im }\theta_i|=2$ for each $i\in I.$  Clearly $\sim$ separates $a$ from $0.$ 
To complete the proof, we need to show that $\sim$ is a congruence.  So, let $x\sim y$ and $m\in M.$  Clearly we may assume that $x, y\in A\!\setminus\!\{0\}.$  Let $i\in I.$  Recalling that $\mathcal{R}$ is a left congruence on $M,$ we have that $mR_i\subseteq R_j$ for some $j\in I.$  Therefore, we have that 
\begin{align*}
(xm)\theta_i=1&\iff(xm)R_i=\{0\}\iff xR_j=\{0\}\iff x\theta_j=1\iff y\theta_j=1\\&\iff yR_j=\{0\}\iff(ym)R_i=\{0\}\iff(ym)\theta_i=1,
\end{align*}
so $(xm)\theta_i=(ym)\theta_i.$
Since $i$ was chosen arbitrarily, we have shown that $xm\sim ym,$ as required.
\end{proof}

\begin{corollary}
\label{all G-acts SSS}
Let $G$ be a group.  Then all $G$-acts are strongly subact separable.
\end{corollary}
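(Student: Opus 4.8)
The plan is to deduce this immediately from Proposition \ref{finitely many R-classes}, which asserts that any monoid with finitely many $\mathcal{R}$-classes has all of its acts strongly subact separable. Thus it suffices to verify that a group $G$ has only finitely many $\mathcal{R}$-classes.

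In fact $G$ has exactly one $\mathcal{R}$-class. Recall that Green's relation $\mathcal{R}$ on $G$ coincides with the relation $\mathcal{R}_{\textbf{G}}$ on the act $\textbf{G},$ so that $g\,\mathcal{R}\,h$ if and only if the principal right ideals agree, i.e.\ $\langle g\rangle=\langle h\rangle,$ which for the act $\textbf{G}$ means $gG=hG.$ For any $g\in G,$ invertibility of $g$ gives $gG=G$: indeed $gG\subseteq G$ trivially, while any $h\in G$ equals $g(g^{-1}h)\in gG.$ Hence $gG=hG=G$ for all $g, h\in G,$ so every pair of elements of $G$ is $\mathcal{R}$-related, and $G$ consists of a single $\mathcal{R}$-class.

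Since one is finite, the hypothesis of Proposition \ref{finitely many R-classes} is met and the conclusion follows directly. There is no real obstacle here; the sole point needing (trivial) verification is that left multiplication by a group element is surjective on $G,$ which is precisely what collapses $G$ to one $\mathcal{R}$-class. One could alternatively phrase the argument via Green's relation $\mathcal{R}$ on the monoid $G$ directly, noting that in a group $\mathcal{R}$ is the universal relation, but routing it through the act $\textbf{G}$ keeps the notation aligned with Proposition \ref{finitely many R-classes}.
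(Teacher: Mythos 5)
Your proof is correct and follows exactly the paper's route: the corollary is deduced from Proposition \ref{finitely many R-classes}, the only observation needed being that a group has a single $\mathcal{R}$-class (since $gG=G$ for every $g\in G$), which is precisely the verification you supply.
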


We conclude this section with a couple of results that will be useful later in the paper.

\begin{prop}
\label{submonoid}
Let $M$ be a monoid with a submonoid $N$ such that $M\!\setminus\!N$ is an ideal.  Let $\mathcal{C}$ be any of the four separability conditions.  If all (finitely generated) $M$-acts satisfy $\mathcal{C},$ then all (finitely generated) $N$-acts satisfy $\mathcal{C}.$
\end{prop}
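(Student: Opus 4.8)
The plan is to promote each $N$-act to an $M$-act by adjoining a single zero, establish the separability condition for this enlarged act using the hypothesis, and then recover the condition for the original $N$-act by restriction. Concretely, given an $N$-act $A$, let $0$ be a new symbol and set $A^0=A\cup\{0\}$. I would define an action of $M$ on $A^0$ by letting $m\in N$ act on $a\in A$ via the given $N$-action, by setting $a\cdot m=0$ whenever $a\in A$ and $m\in M\!\setminus\!N$, and by declaring $0$ a zero. The first task is to check that $A^0$ is a genuine $M$-act: the identity axiom is immediate since $1\in N$, and associativity $a\cdot(mn)=(a\cdot m)\cdot n$ is verified by cases according to whether $m,n$ lie in $N$ or in $M\!\setminus\!N$. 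The only point needing care is that whenever at least one of $m,n$ lies in $M\!\setminus\!N$ both sides collapse to $0$, and this is exactly where the hypothesis that $M\!\setminus\!N$ is an ideal is used, guaranteeing $mn\in M\!\setminus\!N$. This verification is the crux of the construction, though it is routine. I would also record that if $A=\langle u_1,\dots,u_k\rangle$ over $N$, then the same set generates $A^0$ over $M$, since each generator sends every element of $M\!\setminus\!N$ to $0$; hence $A^0$ is finitely generated whenever $A$ is, which is what makes the finitely generated version of the statement go through.

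Next, given data witnessing a potential failure of separability in $A$ — a subset $X\subseteq A$ of the type associated with $\mathcal{C}$ together with an element $a\in A\!\setminus\!X$ — I would pass to a subset $\bar X$ of $A^0$ of the corresponding type: for residual finiteness and complete separability one simply takes $\bar X=X$, while for weak and strong subact separability one takes $\bar X=X\cup\{0\}$. In the latter two cases one checks that $X\cup\{0\}$ is an $M$-subact of $A^0$, and is finitely generated over $M$ when $X$ is finitely generated over $N$, again because everything outside $N$ acts as $0$. Since $a\neq 0$, we have $a\in A^0\!\setminus\!\bar X$, so the hypothesis applied to the $M$-act $A^0$ yields a finite index $M$-congruence $\rho$ on $A^0$ that separates $a$ from $\bar X$, and hence from $X$ as $X\subseteq\bar X$.

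Finally, I would restrict $\rho$ to $A$ by setting $\sigma=\rho\cap(A\times A)$. The main subtlety — and the reason one cannot simply quote Lemma~\ref{subalgebra} — is that $A$ is \emph{not} an $M$-subact of $A^0$, since elements of $A$ are sent outside $A$ by $M\!\setminus\!N$. Nevertheless $A$ is closed under the $N$-action, and this suffices to show directly that $\sigma$ is an $N$-act congruence: if $(x,y)\in\sigma$ and $n\in N$, then $x\cdot n=xn$ and $y\cdot n=yn$ again lie in $A$ and satisfy $(xn,yn)\in\rho$, whence $(xn,yn)\in\sigma$. The relation $\sigma$ has finite index because each of its classes is contained in a class of $\rho$, and it separates $a$ from $X$ because $\rho$ does. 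Thus $A$ satisfies $\mathcal{C}$, and since the construction preserves finite generation, this simultaneously establishes both the general and the finitely generated versions of the proposition.
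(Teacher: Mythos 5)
Your proposal is correct and follows essentially the same route as the paper: the same construction $A\cup\{0\}$ with $M\!\setminus\!N$ acting as zero (the ideal property making the act axioms hold), the same observation that generation and the relevant subset types are preserved, and the same recovery of separability for $A$. The only cosmetic difference is at the end, where you restrict the finite index congruence to $A\times A$, whereas the paper restricts the separating $M$-homomorphism $\theta$ to $A$ and views its finite codomain as an $N$-act; these are equivalent formulations.
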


\begin{proof}
Let $A$ be an $N$-act, let $X\subseteq A$ be of the type associated with $\mathcal{C},$ and let $a\in A\!\setminus\!X.$  (For residual finiteness, we assume that $X$ is a finite subset).  Let $0$ be an element not in $A,$ and let $A^{\prime}=A\cup\{0\}.$  We make $A^{\prime}$ into an $M$-act by defining
$$a^{\prime}\cdot m=
\begin{cases} 
a^{\prime}m&\text{if }a^{\prime}\in A\text{ and }m\in N\\ 
0&\text{otherwise.}
\end{cases}$$
Notice that any set that generates the $N$-act $A$ also generates the $M$-act $A^{\prime}.$  In particular, if $A$ is finitely generated then so is $A^{\prime}.$
Clearly $X\cup\{0\}$ is of type associated with $\mathcal{C}$ in $A^{\prime}.$
Since $A^{\prime}$ satisfies $\mathcal{C},$ there exist a finite $M$-act $C$ and an $M$-homomorphism $\theta : A^{\prime}\to C$ such that $a\notin A^{\prime}\!\setminus\!(X\cup\{0\}).$  The set $C$ is an $N$-act via the restriction of the action of $M$ to $N.$  The restriction $\theta|_A : A\to C$ is easily seen to be an $N$-homomorphism, and clearly $\theta|_A$ separates $a$ from $X.$  Thus $A$ satisfies $\mathcal{C}.$
\end{proof}

A submonoid $N$ of a monoid $M$ is said to be a {\em retract} of $M$ if there exists a homomorphism $\phi : M\to N$ such that $n\phi=n$ for all $n\in N.$

\begin{prop}
\label{retract}
Let $M$ be a monoid and let $N$ be a retract of $M.$  Let $\mathcal{C}$ be any of the four separability conditions.  If all (finitely generated) $M$-acts satisfy $\mathcal{C},$ then all (finitely generated) $N$-acts satisfy $\mathcal{C}.$
\end{prop}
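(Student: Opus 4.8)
The plan is to exploit the retraction $\phi : M\to N$ to turn any $N$-act $A$ into an $M$-act, apply the separability hypothesis for $M$, and then restrict back to $N$. Concretely, given an $N$-act $A$, I would define an $M$-action on the same underlying set $A$ by setting $a\cdot m = a(m\phi)$ for all $a\in A$ and $m\in M$. This is well-defined as an $M$-act: using that $\phi$ is a homomorphism, $a\cdot(mm') = a((mm')\phi) = a\bigl((m\phi)(m'\phi)\bigr) = \bigl(a(m\phi)\bigr)(m'\phi) = (a\cdot m)\cdot m'$, and $a\cdot 1 = a(1\phi) = a\cdot 1_N = a$ since $\phi$ fixes $N$ pointwise and $1\in N$. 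The crucial point is that because $\phi$ restricts to the identity on $N$, the $M$-action \emph{extends} the original $N$-action: for $m = n\in N$ we have $a\cdot n = a(n\phi) = an$. Consequently the $M$-subacts of $A$ coincide with the $N$-subacts (an $N$-invariant subset is automatically $M$-invariant since $M$ acts through $N$, and conversely), and similarly the $M$-act congruences coincide with the $N$-act congruences; finitely generated $N$-acts remain finitely generated as $M$-acts since any $N$-generating set generates under the larger $M$-action.

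Next I would take the separating data of the type associated with $\mathcal{C}$ living in the $N$-act $A$ — that is, a subset $X\subseteq A$ of the appropriate type (finite subset for residual finiteness, finite subact, arbitrary subact, or arbitrary non-empty subset) and an element $a\in A\!\setminus\!X$ — and observe that $X$ is also of the type associated with $\mathcal{C}$ when $A$ is regarded as an $M$-act, precisely because subacts are preserved as noted above. By hypothesis all (finitely generated) $M$-acts satisfy $\mathcal{C}$, so there exist a finite $M$-act $C$ and an $M$-homomorphism $\theta : A\to C$ with $a\theta\notin X\theta$. I would then make $C$ into a finite $N$-act via the restriction of its $M$-action to $N$, and check that $\theta$ is still an $N$-homomorphism: for $n\in N$, $(an)\theta = (a\cdot n)\theta = (a\theta)\cdot n = (a\theta)n$, where the first equality is the extension property and the last is the definition of the restricted $N$-action on $C$. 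Since $a\theta\notin X\theta$ is a statement about elements, it is unaffected by the change of acting monoid, so $\theta$ separates $a$ from $X$ as an $N$-homomorphism. Hence $A$ satisfies $\mathcal{C}$.

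The approach is essentially a clean special case of the general philosophy already used in Proposition \ref{submonoid}, but the mechanism is different: there the larger monoid acts by ``killing'' elements outside $N$ into a zero, whereas here the retraction lets $M$ act through $N$ without adjoining anything. The main thing to verify carefully — and the only place where a genuine obstacle could arise — is the claim that the separability type of $X$ transfers correctly between the two act structures, together with the congruence correspondence; once one confirms that the $M$-subacts of $A$ are exactly its $N$-subacts and likewise for congruences (both of which follow immediately from $M$ acting through the image $N$ via $\phi$), every case of $\mathcal{C}$ goes through uniformly. The finitely generated case requires only the additional observation, already noted, that generating sets are preserved, which is immediate since $a = a\cdot 1$ and more generally the $M$-orbit of any element contains its $N$-orbit.
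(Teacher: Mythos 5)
Your proof is correct and follows essentially the same route as the paper's: induce an $M$-action on the $N$-act $A$ via $a\cdot m = a(m\phi)$, invoke the hypothesis to get a finite $M$-act $C$ and separating $M$-homomorphism $\theta$, then restrict the action on $C$ to $N$ so that $\theta$ becomes a separating $N$-homomorphism. If anything, your treatment of the finitely generated case is slightly more careful than the paper's, since you verify the implication actually needed (an $N$-generating set remains an $M$-generating set, so a finitely generated $N$-act is finitely generated as an $M$-act).
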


\begin{proof}
Let $A$ be an $N$-act, let $X\subseteq A$ by of the type associated with $\mathcal{C},$ and let $a\in A\!\setminus\!X.$  Since $N$ is a retract of $M,$ there exists a homomorphism $\phi : M\to N$ such that $n\phi=n$ for all $n\in N.$  The set $A$ is clearly an $M$-act under the action given by $x\cdot m=x(m\phi).$  It is clear that any set that generates $A$ as an $M$-act also generates $A$ as an $N$-act.  In particular, if $A$ is finitely generated as an $M$-act then it is finitely generated as an $N$-act.  Since $A$ satisfies $\mathcal{C}$ as an $M$-act, there exist a finite $M$-act $C$ and an $M$-homomorphism $\theta : A\to C$ such that $a\theta\notin X\theta.$  The set $C$ is an $N$-act via the restriction of the action of $M$ to $N,$ and it is easily verified that this turns $\theta$ into an $N$-homomorphism.  Since $a\theta\notin X\theta,$ we have shown that $A$ satisfies $\mathcal{C}$ as an $N$-act.
\end{proof}

\section{\large{Some Special Classes of Monoids}\nopunct}
\label{sec:regular}

In this section we study the finiteness conditions introduced in the previous section for certain special classes of monoids.  Specifically, we consider commutative monoids, groups, completely simple semigroups with identity adjoined, and Clifford monoids.

\vspace{0.5em}
\subsection{Commutative monoids\nopunct}
~\par\vspace{0.5em}

It is well known that every finitely generated commutative monoid is residually finite \cite[Theorem 3]{Lallement}, and clearly every quotient of a finitely generated commutative monoid is itself finitely generated and commutative, so from Corollaries \ref{fg acts, quotients} and \ref{all cyclic RF implies all fg SSS} we obtain:

\begin{thm}
\label{fg comm}
Let $M$ be a finitely generated commutative monoid.  Then all finitely generated $M$-acts are residually finite and strongly subact separable.
\end{thm}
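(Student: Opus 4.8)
The plan is to obtain Theorem~\ref{fg comm} as an essentially immediate consequence of the two finiteness-condition results established in Section~\ref{sec:FC}, using only the classical fact that finitely generated commutative monoids are residually finite. First I would recall the hypothesis that $M$ is a finitely generated commutative monoid, so that by \cite[Theorem 3]{Lallement} the monoid $M$ itself is residually finite. The key structural observation is that the class of finitely generated commutative monoids is closed under taking quotients: if $\rho$ is any congruence on $M$, then $M/\rho$ is again commutative, and it is finitely generated since the images of a finite generating set of $M$ generate $M/\rho$. Hence every quotient $M/\rho$ is a finitely generated commutative monoid, and therefore every quotient of $M$ is residually finite by \cite[Theorem 3]{Lallement}.

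For the residual finiteness half of the conclusion, I would invoke Corollary~\ref{fg acts, quotients}. Its applicability requires that every right congruence on $M$ be two-sided, which holds automatically here because $M$ is commutative (there is no distinction between left and right multiplication, so every right congruence is a congruence). Condition~(3) of that corollary --- that every quotient of $M$ is residually finite --- has just been verified, so condition~(1) follows: all finitely generated $M$-acts are residually finite.

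For the strong subact separability half, the cleanest route is Corollary~\ref{all cyclic RF implies all fg SSS}. Since every cyclic $M$-act is in particular finitely generated, the residual finiteness established in the previous step shows that all cyclic $M$-acts are residually finite; Corollary~\ref{all cyclic RF implies all fg SSS} then yields directly that all finitely generated $M$-acts are strongly subact separable (and hence weakly subact separable). This completes the proof.

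I do not anticipate any serious obstacle, since the genuine mathematical content has been front-loaded into the machinery of Section~\ref{sec:FC} and into Lallement's theorem; the proof is purely a matter of checking that the hypotheses of Corollaries~\ref{fg acts, quotients} and~\ref{all cyclic RF implies all fg SSS} are met. The one point meriting a moment's care is the closure of the class under quotients --- specifically confirming that $M/\rho$ inherits both commutativity and finite generation --- but this is routine. If anything, the subtlety is purely bookkeeping: ensuring one applies the corollaries in the right logical order (residual finiteness of all cyclic acts is needed as an input to the strong-separability corollary), which is why I would establish the residual finiteness conclusion first and then feed it into the separability conclusion.
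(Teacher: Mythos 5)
Your proof is correct and matches the paper's own argument essentially verbatim: the paper likewise observes that quotients of $M$ are again finitely generated commutative monoids, hence residually finite by Lallement's theorem, and then deduces the theorem from Corollaries~\ref{fg acts, quotients} and~\ref{all cyclic RF implies all fg SSS}, using commutativity to ensure every right congruence is two-sided. There is nothing to add.
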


\begin{remark}
Theorem \ref{fg comm} does not hold if we remove the condition that $M$ is finitely generated.  For example, if $M$ is a free commutative monoid with an infinite basis, then there certainly exist quotients of $M$ that are not residually finite, and hence there exist finitely generated $M$-acts that are not residually finite by Corollary \ref{fg acts, quotients}.
\end{remark}

For a finitely generated commutative monoid, being completely separable is equivalent to every $\mathcal{H}$-class being finite \cite[Theorem 4.3]{Miller}, so we obtain the following from Corollary \ref{all fg acts CS}.

\begin{prop}
\label{fg comm, CS}
Let $M$ be a finitely generated commutative monoid.  Then the following are equivalent:
\begin{enumerate}  
\item all finitely generated $M$-acts are completely separable;
\item for every congruence $\rho$ on $M,$ every $\mathcal{H}$-class of the quotient $M/\rho$ is finite.
\end{enumerate}
\end{prop}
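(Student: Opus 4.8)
The plan is to combine Corollary \ref{all fg acts CS} with the cited characterisation of complete separability for finitely generated commutative monoids from \cite[Theorem 4.3]{Miller}. First I would observe that a finitely generated commutative monoid $M$ satisfies the hypothesis of Corollary \ref{all fg acts CS}: since $M$ is commutative, every right congruence on $M$ is automatically a two-sided congruence, because $(a,b)\in\rho$ implies $(am,bm)=(ma,mb)\in\rho$ for all $m\in M$. Thus Corollary \ref{all fg acts CS} applies, and its equivalence of (1) and (3) tells me that all finitely generated $M$-acts are completely separable if and only if every quotient of $M$ is completely separable.

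Next I would reduce the condition ``every quotient of $M$ is completely separable'' to the stated $\mathcal{H}$-class condition. For any congruence $\rho$ on $M$, the quotient $M/\rho$ is again a finitely generated commutative monoid (it is a homomorphic image of $M$, hence generated by the images of a finite generating set of $M$, and commutativity is inherited). Therefore \cite[Theorem 4.3]{Miller} applies to $M/\rho$, giving that $M/\rho$ is completely separable if and only if every $\mathcal{H}$-class of $M/\rho$ is finite. Quantifying over all congruences $\rho$, I conclude that every quotient of $M$ is completely separable if and only if for every congruence $\rho$ on $M$, every $\mathcal{H}$-class of $M/\rho$ is finite, which is exactly statement (2). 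Stringing these equivalences together yields (1)\,$\Leftrightarrow$\,(2).

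The proof is essentially a matter of correctly invoking the two cited results, so there is no serious obstacle; the only points requiring care are verifying that the hypotheses of Corollary \ref{all fg acts CS} hold (that right congruences are two-sided, which is immediate from commutativity) and that each quotient $M/\rho$ remains finitely generated and commutative so that \cite[Theorem 4.3]{Miller} is applicable to it rather than only to $M$ itself. Both verifications are routine, and the main content is organisational: channelling the general act-theoretic criterion of Corollary \ref{all fg acts CS} through the monoid-level characterisation of \cite{Miller}.

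\begin{proof}
Since $M$ is commutative, every right congruence on $M$ is a two-sided congruence: if $(a,b)\in\rho$ and $m\in M,$ then $(am,bm)=(ma,mb)\in\rho.$ Hence the hypothesis of Corollary \ref{all fg acts CS} is satisfied, and by the equivalence of its statements (1) and (3), all finitely generated $M$-acts are completely separable if and only if every quotient of $M$ is completely separable.

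Now let $\rho$ be any congruence on $M.$ The quotient $M/\rho,$ being a homomorphic image of the finitely generated commutative monoid $M,$ is itself a finitely generated commutative monoid. Therefore, by \cite[Theorem 4.3]{Miller}, the monoid $M/\rho$ is completely separable if and only if every $\mathcal{H}$-class of $M/\rho$ is finite. Quantifying over all congruences $\rho$ on $M,$ we obtain that every quotient of $M$ is completely separable if and only if, for every congruence $\rho$ on $M,$ every $\mathcal{H}$-class of $M/\rho$ is finite. Combining this with the equivalence from the previous paragraph completes the proof.
\end{proof}
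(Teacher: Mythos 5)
Your proof is correct and follows essentially the same route as the paper, which likewise derives the proposition by combining Corollary \ref{all fg acts CS} (applicable since commutativity makes every right congruence two-sided) with the characterisation of complete separability for finitely generated commutative monoids in \cite[Theorem 4.3]{Miller}, applied to each quotient $M/\rho.$ Your write-up simply makes explicit the routine verifications that the paper leaves implicit.
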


\begin{corollary}
\label{N, CS}
Let $M$ denote the free monogenic monoid.  Then all finitely generated $M$-acts are completely separable.
\end{corollary}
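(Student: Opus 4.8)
The plan is to invoke Proposition \ref{fg comm, CS}. Writing the free monogenic monoid as $M=\{x^n : n\geq 0\}$ with $x^0=1,$ we note that $M$ is finitely generated and commutative, so that proposition applies: it therefore suffices to prove that for every congruence $\rho$ on $M,$ every $\mathcal{H}$-class of the quotient $N=M/\rho$ is finite.

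First I would observe that $N$ is again a monogenic monoid, being generated by $[x]_{\rho}.$ A monogenic monoid is either finite or isomorphic to the free monogenic monoid $M$: if the powers $[x]_{\rho}^{\,n}$ $(n\geq 0)$ are pairwise distinct then $N\cong M$ via $x^n\mapsto[x]_{\rho}^{\,n},$ and otherwise $[x]_{\rho}^{\,i}=[x]_{\rho}^{\,j}$ for some $0\leq i<j,$ whence $N=\{1, [x]_{\rho}, \dots, [x]_{\rho}^{\,j-1}\}$ is finite. Thus every quotient $N$ of $M$ falls into exactly one of these two cases.

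If $N$ is finite, then trivially all of its $\mathcal{H}$-classes are finite. If instead $N\cong M,$ it suffices to show that $\mathcal{H}$ is the equality relation on $M.$ Since $M$ is commutative we have $\mathcal{H}=\mathcal{R},$ and the principal right ideal generated by $x^i$ is $x^iM=\{x^n : n\geq i\};$ these ideals are pairwise distinct, so $x^i\,\mathcal{R}\,x^j$ forces $i=j.$ Hence every $\mathcal{H}$-class of $M$ is a singleton. In either case every $\mathcal{H}$-class of $N$ is finite, and Proposition \ref{fg comm, CS} then gives that all finitely generated $M$-acts are completely separable.

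I do not expect any genuine obstacle here; the only point requiring care is the structural fact that an infinite monogenic monoid is free, equivalently that $\mathcal{H}$ is trivial on $M,$ which is a standard feature of the free monogenic monoid and follows at once from inspecting its principal right ideals.
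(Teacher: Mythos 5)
Your proof is correct and follows essentially the same route as the paper: both invoke Proposition \ref{fg comm, CS} after observing that every $\mathcal{H}$-class of $M$ itself is a singleton and that every proper (non-trivial) quotient of the free monogenic monoid is finite. Your write-up merely supplies the details the paper leaves as ``clear,'' namely the collapse of powers in a proper quotient and the distinctness of the principal right ideals $x^iM.$
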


\begin{proof}
It is clear that every $\mathcal{H}$-class of $M$ is a singleton.  For any non-trivial congruence $\rho$ on $M,$ the quotient $M/\rho$ is finite, so certainly every $\mathcal{H}$-class of $M/\rho$ is finite.  Therefore, by Proposition \ref{fg comm, CS}, all finitely generated $M$-acts are completely separable.
\end{proof}

It is not the case that every act over the free monogenic monoid is completely separable (or even residually finite), as the following example demonstates.

\begin{ex}
Let $M=\langle x\rangle\cong\mathbb{N}_0,$ and let $A=\{a_i : i\in\mathbb{N}_0\}\cup\{0\}.$  We define an action of $M$ on $A$ by $0x^j=0$ and
$$a_ix^j=\begin{cases}
a_{i-j}&\text{if }i\geq j,\\
0&\text{otherwise.}
\end{cases}$$
We show that $A$ is indeed an $M$-act under this action.  Clearly $a1=a$ for all $a\in A$ and $(0x^j)x^k=0x^{j+k}$ for all $j, k\in\mathbb{N}_0.$  Now let $i, j, k\in\mathbb{N}_0.$  We need to show that $(a_ix^j)x^k=a_ix^{j+k}.$  Suppose first that $i\geq j+k.$  Then certainly $i\geq j$ and $i-j\geq k.$  Therefore, we have that
$$(a_ix^j)x^k=a_{i-j}x^k=a_{i-j-k}=a_{i-(j+k)}=a_ix^{j+k}.$$
Now suppose that $i<j+k.$  Then $i-j<k.$  Now $a_ix^j$ is either $0$ or $a_{i-j}.$  In either case we have that $(a_ix^j)x^k=0=a_ix^{j+k},$ as desired.\par
We now claim that $0$ and $a_0$ cannot be separated in a finite index congruence on $A,$ so $A$ is not residually finite.  Indeed, let $\sim$ be a finite index congruence on $A.$  Then there exist $i, j\in\mathbb{N}_0$ with $i<j$ such that $a_i\sim a_j.$  It follows that $$0=a_ix^j\sim a_jx^j=a_0,$$ as required.
\end{ex}

\begin{remark}
Let $M$ be a free commutative monoid on at least two generators.  Then the group $\mathbb{Z}$ is a quotient of $M,$ and of course $\mathbb{Z}$ has an infinite $\mathcal{H}$-class, namely itself.  Therefore, there exists a cyclic $M$-act that is not completely separable by Proposition \ref{fg comm, CS} and Theorem \ref{all cyclic acts}.
\end{remark}

A semigroup $S$ is said to be {\em null} if it has a zero $0$ and $st=0$ for all $s, t\in S.$  Kozhukhov showed in the proof of \cite[Lemma 1]{Kozhukhov2} that for any infinite null semigroup $S$ there exists an infinite subdirectly indecomposable $S$-act.  It can be easily shown that such an act is not residually finite.  We now present Kozhukhov's construction, turn it into an $S^1$-act, and show that it is neither residually finite nor weakly subact separable.

\begin{ex}[{\cite[Proof of Lemma 1]{Kozhukhov2}}]
Let $S=\{s_i : i\in I\}\cup\{z\}$ with $I$ infinite, and define $st=z$ for all $s, t\in S.$  Thus $S$ is a null semigroup with zero $z.$  Let $A=\{a_i : i\in I\}\cup\{b, 0\}.$  Define an action of $S$ on $A$ by
$$a_is_j=\begin{cases}
b&\text{if }i=j\\
0&\text{if }i\neq j,
\end{cases}
\;\text{ and }\;bs_j=0s_j=az=0\;\; (a\in A, i, j\in I).$$
We have that $a(st)=az=0=(as)t$ for all $a\in A$ and $s, t\in S.$  Thus, setting $a1=a$ for all $a\in A,$ the set $A$ is an $S^1$-act.  We claim that we cannnot separate $b$ from $\{0\}$ in a finite index congruence, so $A$ is neither residually finite nor weakly subact separable.  Indeed, if $\sim$ is a finite index congruence on $A,$ then there exist $i, j\in I$ with $i\neq j$ such that $a_i\sim a_j,$ and hence $b=a_is_i\sim a_js_i=0,$ as required.
\end{ex}

We now show that every finitely generated act over a null semigroup with identity adjoined is completely separable.

\begin{prop}
\label{null}
Let $S$ be a null semigroup and let $M=S^1.$  Then all finitely generated $M$-acts are completely separable.
\end{prop}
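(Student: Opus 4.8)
The plan is to reduce the statement to a property of the quotient monoids of $M$ via Corollary \ref{all fg acts CS}, and then to verify that property using the combinatorial criterion of Corollary \ref{CS monoid}. Throughout, write $z$ for the zero of $S$, so that the product of any two elements of $S$ equals $z$.

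First I would check that every right congruence on $M=S^1$ is in fact a two-sided congruence, since this is the hypothesis needed to apply Corollary \ref{all fg acts CS}. Given a right congruence $\rho$ and a pair $(u,v)\in\rho$, the only case requiring work is left-multiplication by some $w\in S$ when $\{u,v\}=\{1,v\}$ with $v\in S$: here right-multiplying the pair $(1,v)$ by $w$ yields $(w,vw)=(w,z)\in\rho$, which is exactly the relation $(w1,wv)\in\rho$ that is needed. Every other case is immediate because any two elements of $S$ multiply to $z$. With this established, Corollary \ref{all fg acts CS} reduces the claim to showing that every quotient $N=M/\rho$ is completely separable.

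Next I would analyse the structure of such a quotient. If the $\rho$-class of $1$ contains an element of $S$, then right-multiplying shows $1\sim z$ and hence $N$ is trivial, so I may assume this class is disjoint from $S$. Since the product of any two non-identity elements of $M$ is $z$, the same holds in $N$: writing $Z=[z]_\rho$ and $T:=N\setminus\{[1]_\rho\}$, any two elements of $T$ multiply to $Z$, so $N=T^1$ with $T$ a null semigroup. The final and main step is to apply Corollary \ref{CS monoid}: for each $\alpha\in N$ I must show that the collection $\{[\alpha,\beta]:\beta\in N\}$ is finite, where $[\alpha,\beta]=\{\gamma\in N:\alpha=\beta\gamma\}$. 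The point, and the only genuine subtlety since $T$ may well be infinite, is that the null multiplication forces each set $[\alpha,\beta]$ to take one of only finitely many values. A short case analysis according to whether $\beta=[1]_\rho$, $\beta=Z$, or $\beta\in T\setminus\{Z\}$ (and correspondingly whether $\alpha$ is $[1]_\rho$, $Z$, or a non-zero element of $T$) shows that $[\alpha,\beta]$ is always one of $\emptyset$, $\{[1]_\rho\}$, $\{\alpha\}$, $T$, or $N$. Hence the collection is finite for every $\alpha$, so $N$ is completely separable by Corollary \ref{CS monoid}. Invoking Corollary \ref{all fg acts CS} then gives that all finitely generated $M$-acts are completely separable, completing the proof.
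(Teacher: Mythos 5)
Your proof is correct and follows essentially the same route as the paper: reduce to quotients of $M$ via Corollary \ref{all fg acts CS}, observe that every quotient is again a null semigroup with identity adjoined, and verify the finiteness criterion of Corollary \ref{CS monoid} by computing the sets $[\alpha,\beta]$. In fact you are slightly more careful than the paper, which leaves implicit both the verification that every right congruence on $S^1$ is two-sided (the hypothesis needed to invoke Corollary \ref{all fg acts CS}) and the structural analysis of the quotients.
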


\begin{proof}
Let $N$ be any quotient of $M.$  It is easy to see that $N=T^1$ for some null semigroup $T.$  
Recalling Definition \ref{CS defn}, for any $a\in N\!\setminus\!\{0\}$ we have 
$$\{[a, b] : b\in N\}=\{[a, 1], [a, a], \emptyset\}=\bigl\{\{a\}, \{1\}, \emptyset\bigr\},$$
and $$\{[0, b] : b\in N\}=\{[0, 1], [0, 0], T\}=\bigl\{\{0\}, N, T\bigr\}.$$
It follows from Corollary \ref{CS monoid} that $N$ is completely separable.  Hence, by Corollary \ref{all fg acts CS}, all finitely generated $M$-acts are completely separable.
\end{proof}

\begin{prob}
Does there exist an infinite commutative monoid whose acts are all completely separable?
\end{prob}

\vspace{0.5em}
\subsection{Groups\nopunct}
~\par\vspace{0.5em}

We have seen that, for a group $G,$ all $G$-acts are strongly subact separable, and all $G$-acts are completely separable if and only if $G$ is finite.
Kozhukhov proved that all $G$-acts are residually finite if and only if every subgroup of $G$ is the intersection of a family of finite index subgroups \cite[Theorem 7]{Kozhukhov3}.  We now reprove his result and also find some new characterisations.

\begin{thm}
\label{all G-acts RF}
The following are equivalent for a group $G$:
\begin{enumerate}
\item all $G$-acts are residually finite;
\item all finitely generated $G$-acts are residually finite;
\item all cyclic $G$-acts are residually finite;
\item every subgroup of $G$ is the intersection of a family of finite index subgroups;
\item $G$ is strongly subgroup separable.
\end{enumerate}
\end{thm}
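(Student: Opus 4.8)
The plan is to prove Theorem \ref{all G-acts RF} by establishing a cycle of implications, exploiting the machinery already developed in the paper together with one genuinely group-theoretic fact. First note that (1)$\Rightarrow$(2)$\Rightarrow$(3) are immediate, since every cyclic act is finitely generated and every finitely generated act is an act. The equivalence (4)$\Leftrightarrow$(5) is essentially the definition: a group $G$ is strongly subgroup separable precisely when every subgroup $H$ can be separated from any $g\notin H$ in a finite quotient, and the standard translation says this is equivalent to $H$ being the intersection of the finite index subgroups containing it. So the real content lies in the loop connecting the act-theoretic conditions (1)--(3) to the subgroup condition (4).

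The crucial observation is that cyclic $G$-acts correspond, via Proposition \ref{cyclic act, right congruence}, to right congruences on $G$, and for a \emph{group} the right congruences are exactly the partitions into right cosets $Hg$ of subgroups $H\leq G$. Thus a cyclic $G$-act is (up to isomorphism) the coset space $\textbf{G}/\rho_H$ where $\rho_H$ is the right congruence $g\,\rho_H\,g' \iff Hg=Hg'$, i.e.\ $g'g^{-1}\in H$. To prove (3)$\Rightarrow$(4), I would fix a subgroup $H$ and apply Corollary \ref{cyclic RF}: residual finiteness of the cyclic act $\textbf{G}/\rho_H$ is equivalent to $\rho_H$ being the intersection of a family of finite index right congruences on $G$. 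The plan is then to show that a finite index right congruence on $G$ containing $\rho_H$ corresponds to a finite index subgroup containing $H$, and that intersecting the congruences corresponds to intersecting the subgroups; this yields exactly that $H$ is the intersection of a family of finite index subgroups, giving (4). For the reverse direction feeding into (1), I would take (4), hence (5), and argue that $G$ being strongly subgroup separable makes every cyclic act residually finite (running the coset-congruence correspondence backwards), and then promote this to all acts.

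The promotion from cyclic acts to arbitrary acts is where I expect the main subtlety, and here I would lean on the decomposition results. By Corollary \ref{decomposition}, an arbitrary $G$-act is residually finite iff each of its indecomposable components is, and for a group every indecomposable act is cyclic (by \cite[Proposition 2.3]{Knauer}, already invoked earlier in the paper). Hence (3)$\Rightarrow$(1): if all cyclic $G$-acts are residually finite, then every indecomposable $G$-act is residually finite, so every $G$-act is residually finite by Corollary \ref{decomposition}. This closes the cycle (1)$\Rightarrow$(2)$\Rightarrow$(3)$\Rightarrow$(4)$\Leftrightarrow$(5), together with a return arc (4)$\Rightarrow$(3)$\Rightarrow$(1). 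In fact, since (3)$\Rightarrow$(1) and (1)$\Rightarrow$(3) are both available, it suffices to prove the single equivalence (3)$\Leftrightarrow$(4) and then fold in the decomposition argument.

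The step I anticipate to be the main obstacle is making the translation in (3)$\Leftrightarrow$(4) fully precise, namely verifying that the lattice of finite index right congruences on $G$ refining $\rho_H$ matches the lattice of finite index subgroups containing $H$, and that intersections are preserved under this matching. One must check that a right congruence $\sigma\supseteq\rho_H$ is itself of the form $\rho_K$ for some subgroup $K\supseteq H$ (using that right congruences on a group are coset partitions), that $\sigma$ has finite index iff $[G:K]$ is finite, and that $\bigcap_i \rho_{K_i}=\rho_{\bigcap_i K_i}$. Granting these routine but essential verifications, Corollary \ref{cyclic RF} converts the condition ``$\rho_H=\bigcap_i\sigma_i$ with each $\sigma_i$ finite index'' directly into ``$H=\bigcap_i K_i$ with each $K_i$ finite index,'' which is precisely condition (4). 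Everything else then assembles from the quoted results.
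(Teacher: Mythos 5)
Your proposal is correct, and its core coincides with the paper's proof: the trivial implications (1)$\Rightarrow$(2)$\Rightarrow$(3), the passage (3)$\Rightarrow$(1) by splitting an arbitrary $G$-act into cyclic pieces (the paper uses the orbit decomposition plus Proposition \ref{disjoint union}; you use Corollary \ref{decomposition} plus Knauer's result that indecomposable group acts are cyclic --- same idea, equally valid), and the equivalence (3)$\Leftrightarrow$(4) via the correspondence between right congruences on $G$ and subgroups of $G$, which the paper also treats as routine when it invokes Corollary \ref{all fg acts RF}. Where you genuinely diverge is in handling condition (5). You dispose of it by the purely group-theoretic standard fact that a subgroup is separable if and only if it is an intersection of finite-index subgroups (closure in the profinite topology), i.e.\ (4)$\Leftrightarrow$(5), and never touch acts again. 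The paper instead proves (3)$\Leftrightarrow$(5) directly by act-theoretic constructions: for (3)$\Rightarrow$(5) it takes the coset act $G/H$, uses its residual finiteness to get a finite act $A$, and composes with the representation $G\to S_A$; for (5)$\Rightarrow$(3) it converts a separating finite group quotient $\theta\colon G\to K$ into a finite coset act $K/L$ with $L=H\theta$ and checks the induced map $G/H\to K/L$ is a $G$-homomorphism. Your route is shorter but outsources the real content of this leg to a quoted group-theory fact (whose proof, via the coset action on $G/K$, is essentially the same symmetric-group argument the paper writes out); the paper's route is self-contained and exhibits explicitly how finite acts and finite group quotients translate into one another, in the same spirit as its Theorem \ref{monoid quotient separability}. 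If you wanted your version to be fully self-contained, you would need to supply that standard coset-action argument for (4)$\Rightarrow$(5), at which point the two proofs have nearly identical total content.
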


\begin{proof}
(1)\,$\Rightarrow$\,(2) and (2)\,$\Rightarrow$\,(3) are obvious.\par
(3)\,$\Rightarrow$\,(1).  Since every $G$-act is the disjoint union of cyclic $G$-acts, this follows from Proposition \ref{disjoint union}.\par
(3)\,$\Leftrightarrow$\,(4).  Since (finite index) right congruences on $G$ are in one-to-one correspondence with (finite index) subgroups of $G,$ this follows from Corollary \ref{all fg acts RF}.\par
(3)\,$\Rightarrow$\,(5).  Let $H$ be a subgroup of $G$ and let $x\in G\!\setminus\!H.$  The set $G/H=\{Hg : g\in G\}$ of right cosets of $H$ is a $G$-act via $Hg\cdot g^{\prime}=H(gg^{\prime}).$  Clearly $G/H$ is cyclic, so it is residually finite by assumption.  We have that $Hx\neq H$ since $x\notin H.$  Therefore, there exist a finite $G$-act $A$ and a $G$-homomorphism $\theta : G/H\to A$ such that $(Hx)\theta\neq H\theta.$  Now, we have a group homomorphism $\psi : G\to S_A,$ where $S_A$ denotes the symmetric group on $A,$ given by $a(g\psi)=ag$ for all $a\in A.$  For each $h\in H$ we have
$$(H\theta)(h\psi)=(H\theta)h=(Hh)\theta=H\theta\neq(Hx)\theta=(H\theta)x=(H\theta)(x\psi),$$
so $x\psi\neq h\psi.$  Thus $\psi$ separates $x$ from $H,$ and hence $G$ is strongly subgroup separable.\par
(5)\,$\Rightarrow$\,(3).  Let $A$ be a cyclic $G$-act.  Then $A\cong G/H=\{Hg : g\in G\}$ for some subgroup $H$ of $G.$  We need to show that $G/H$ is residually finite.  So, let $x, y\in G$ with $Hx\neq Hy.$  Then $xy^{-1}\notin H.$  Since $G$ is strongly subgroup separable, there exist a finite group $K$ and a group homomorphism $\theta : G\to K$ such that $(xy^{-1})\theta\notin H\theta.$  Let $L$ denote the subgroup $H\theta$ of $K.$  The set $K/L=\{Lk : k\in K\}$ of right cosets of $L$ is a $G$-act via $Lk\cdot g=L\bigl(k(g\theta)\bigr).$  Certainly $K/L$ is finite as $K$ is finite.  We have a well-defined map
$$\phi : G/H\to K/L, Hg\mapsto L\cdot g.$$
For any $g, g^{\prime}\in G,$ we have
$$\bigl((Hg)g^{\prime})\phi=\bigl(H(gg^{\prime})\bigr)\phi=L\cdot gg^{\prime}=(L\cdot g)\cdot g^{\prime}=(Hg)\phi\cdot g^{\prime},$$
so $\phi$ is a $G$-homomorphism.  Since $(xy^{-1})\theta\notin H\theta,$ we deduce that
$$(Hx)\phi=L\cdot x=(H\theta)(x\theta)=(Hx)\theta\neq(Hy)\theta=(H\theta)(y\theta)=L\cdot y=(Hy)\phi.$$
Thus $\phi$ separates $Hx$ and $Hy,$ as desired.
\end{proof}

Mal'cev proved in \cite{Malcev} that all polycyclic-by-finite groups (which include finitely generated nilpotent groups and hence finitely generated abelian groups) are strongly subgroup separable, so we deduce:

\begin{corollary}
If $G$ is a polycyclic-by-finite group, then all $G$-acts are residually finite.
\end{corollary}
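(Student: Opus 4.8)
The final statement to prove is the corollary: if $G$ is a polycyclic-by-finite group, then all $G$-acts are residually finite. The plan is to invoke the characterization already established in Theorem~\ref{all G-acts RF}, which states that all $G$-acts are residually finite if and only if $G$ is strongly subgroup separable (condition (5)). Thus the entire content of the corollary reduces to verifying that polycyclic-by-finite groups are strongly subgroup separable, and this is precisely the cited result of Mal'cev \cite{Malcev}.

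First I would recall the equivalence $(1)\Leftrightarrow(5)$ from Theorem~\ref{all G-acts RF}: a group $G$ has the property that all its acts are residually finite exactly when $G$ is strongly subgroup separable, i.e.\ every subgroup of $G$ is the intersection of a family of finite-index subgroups. Then I would simply cite Mal'cev's theorem from \cite{Malcev}, which asserts that every polycyclic-by-finite group is strongly subgroup separable. Combining these two facts yields the conclusion immediately. The proof is therefore a one-line deduction, and there is no genuine mathematical obstacle internal to the paper; the substantive work has been outsourced to the external reference and to the earlier theorem.

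The only point requiring minor care is making sure the terminology matches: Mal'cev's result is usually phrased in terms of subgroup separability (or ``LERF''-type conditions), and one should confirm this coincides with condition (4)/(5) of Theorem~\ref{all G-acts RF}. Since the paper has already identified ``strongly subgroup separable'' with ``every subgroup is an intersection of finite-index subgroups'' (the equivalence $(4)\Leftrightarrow(5)$ in that theorem), no reconciliation is needed beyond citing the appropriate form. A suitable rendering of the proof is as follows.

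\begin{proof}
By Mal'cev's theorem \cite{Malcev}, every polycyclic-by-finite group is strongly subgroup separable. Hence, if $G$ is polycyclic-by-finite, then $G$ is strongly subgroup separable, and so all $G$-acts are residually finite by the equivalence of (1) and (5) in Theorem~\ref{all G-acts RF}.
\end{proof}
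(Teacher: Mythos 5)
Your proof is correct and is exactly the paper's argument: the corollary is stated immediately after the paper cites Mal'cev's theorem that polycyclic-by-finite groups are strongly subgroup separable, and the deduction is precisely via the equivalence of (1) and (5) in Theorem~\ref{all G-acts RF}.
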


Smirnov proved in \cite{Smirnov} that a nilpotent group $G$ is strongly subgroup separable if and only if every quotient of $G$ by a normal subgroup is residually finite (see \cite[Theorem 4.1]{Robinson} for a short proof).  This result and Theorem \ref{all G-acts RF} together yield:

\begin{corollary}
\label{nilpotent}
Let $G$ be a nilpotent group.  Then the following are equivalent:
\begin{enumerate}
\item all $G$-acts are residually finite;
\item for every normal subgroup $N$ of $G,$ the quotient $G/N$ is residually finite.
\end{enumerate}
\end{corollary}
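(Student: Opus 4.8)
The plan is to prove Corollary \ref{nilpotent} by combining the characterisation already established in Theorem \ref{all G-acts RF} with Smirnov's theorem on nilpotent groups. The key observation is that Theorem \ref{all G-acts RF} gives the equivalence of ``all $G$-acts are residually finite'' with ``$G$ is strongly subgroup separable'' (conditions (1) and (5)), so the entire task reduces to translating this into the quotient condition (2) of the corollary for the case of nilpotent groups. Since Smirnov's result is being invoked as a black box, the proof is essentially a two-line chaining of cited theorems, and there is no genuine mathematical obstacle; the main care required is stating the logical implications cleanly.

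First I would note that by Theorem \ref{all G-acts RF}, for any group $G$ the statement that all $G$-acts are residually finite is equivalent to $G$ being strongly subgroup separable. Then, specialising to the nilpotent case, I would invoke Smirnov's theorem (as cited, with the short proof available in \cite[Theorem 4.1]{Robinson}), which asserts that a nilpotent group $G$ is strongly subgroup separable if and only if for every normal subgroup $N$ of $G$ the quotient $G/N$ is residually finite. Composing these two equivalences immediately yields that condition (1) of the corollary is equivalent to condition (2).

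Concretely, the write-up would read: by Theorem \ref{all G-acts RF}, condition (1) holds if and only if $G$ is strongly subgroup separable. Since $G$ is nilpotent, Smirnov's theorem tells us that $G$ is strongly subgroup separable if and only if $G/N$ is residually finite for every normal subgroup $N$ of $G,$ which is precisely condition (2). This completes the proof.

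The only point demanding attention is making sure the hypothesis of Smirnov's theorem is correctly matched: the corollary is stated for a general nilpotent group, and Smirnov's characterisation applies to exactly this class, so no additional finiteness or finite-generation assumption needs to be smuggled in. I do not anticipate any hard part here, as both ingredients are pre-established results stated earlier in the excerpt or cited; the proof is a direct transitivity of the two biconditionals.
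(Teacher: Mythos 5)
Your proof is correct and is exactly the paper's argument: the corollary is obtained by chaining the equivalence of (1) with strong subgroup separability from Theorem \ref{all G-acts RF} together with Smirnov's theorem for nilpotent groups. No issues.
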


\begin{remark}
\label{solvable}
Corollary \ref{nilpotent} does not hold for solvable groups.
Indeed, by \cite[Remark 3]{Robinson}, the solvable group $G$ defined by the presentation $\langle a, b\,|\,b^{-1}ab=a^2\rangle$ satisfies the condition that all its quotients are residually finite, but $G$ is not strongly subgroup separable.  
\end{remark}

\begin{remark}
It follows from Theorem \ref{all G-acts RF} and Remark \ref{solvable} that the converse of Corollary \ref{all quotients RF} (for residual finiteness) does not hold.
\end{remark}

\vspace{0.5em}
\subsection{Completely simple semigroups with identity adjoined\nopunct}
~\par\vspace{0.5em}

Recall that a semigroup $S$ is {\em completely simple} if it is simple (that is, it contains no proper ideals) and possesses minimal left and right ideals.
Completely simple semigroups have the following well-known representation, due to Rees.  Let $G$ be a group, let $I$ and $J$ be two non-empty index sets, and let $P=(p_{ji})$ be a $J\times I$ matrix with entries from $G.$
The set $I\times G\times J$ with multiplication given by $$(i, g, j)(k, h, l)=(i, gp_{jk}h, l)$$
is a semigroup, called the {\em Rees matrix semigroup over $G$ with respect to $P$} and denoted by $\mathcal{M}(G; I, J; P).$
Rees' Theorem states that a semigroup $S$ is completely simple if and only if it is isomorphic to a Rees matrix semigroup $\mathcal{M}(G; I, J; P)$ (see \cite[Theorem 3.3.1]{Howie}).  In fact, by \cite[Theorem 3.4.2]{Howie}, a semigroup is completely simple if and only if it is isomorphic to a Rees matrix semigroup $\mathcal{M}(G; I, J; P)$ where $P$ is {\em normalised} (that is, there exist a row and a column of $P$ in which all the entries are the identity of $G$).\par
Golubov found necessary and sufficient conditions for a completely simple semigroup to be residually finite in \cite{Golubov2}.  Before stating his result, we first need to make some definitions.\par
Consider a completely simple semigroup $\mathcal{M}(G; I, J; P).$  Define an equivalence relation $\sim_I$ on $I$ by $i\sim_Ik$ if and only if there exists $g\in G$ such that $p_{ji}=p_{jk}g$ for all $j\in J.$  Let $r_I$ denote the index of $\sim_I$.  Similarly, define an equivalence relation $\sim_J$ on $J$ by $j\sim_Jl$ if and only if there exists $g\in G$ such that $p_{ji}=gp_{li}$ for all $i\in I,$ and let $r_J$ denote the index of $\sim_J$.  The {\em rank} of $P$ is defined to be $\max(r_I, r_J).$  Given a normal subgroup $N$ of $G,$ let $P/N$ denote the $J\times I$ matrix $(p_{ji}N)_{j\in J, i\in I}.$

\begin{thm}[{\cite[Theorem 3]{Golubov2}}]
\label{Golubov}
A completely simple semigroup $\mathcal{M}(G; I, J; P)$ is residually finite if and only if $G$ is residually finite and $P/N$ has finite rank for every normal subgroup $N$ of $G$ of finite index.
\end{thm}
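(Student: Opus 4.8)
The plan is to couple the classical structure theory of completely simple semigroups with the intersection characterisation of residual finiteness in Proposition \ref{RF characterisation}; write $S=\mathcal{M}(G;I,J;P)$. The engine is the standard description of homomorphisms between completely simple semigroups (see \cite{Howie}): every homomorphic image of $S$ is again completely simple, and any homomorphism $\phi\colon S\to\mathcal{M}(H;I',J';Q)$ has the form $\phi(i,g,j)=(u(i),\,a_i\,\omega(g)\,b_j,\,v(j))$ for a group homomorphism $\omega\colon G\to H$, maps $u\colon I\to I'$, $v\colon J\to J'$, and elements $a_i,b_j\in H$, subject to $\omega(p_{ji})=b_j\,Q_{v(j),u(i)}\,a_i$ for all $i,j$. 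The decisive observation, extracted from this identity by a short cancellation, is that $u(i)=u(k)$ forces $p_{jk}^{-1}p_{ji}$ to lie in a single coset of $N:=\ker\omega$ independent of $j$; that is, the fibres of $u$ refine the relation $\sim_I$ computed for the matrix $P/N$, and symmetrically $v$ refines $\sim_J$ for $P/N$. I would record this as a lemma: \emph{if $S$ maps onto a finite semigroup with group part $N=\ker\omega$, then $N$ has finite index and $r_I(P/N),r_J(P/N)<\infty$} (for $u,v$ then have finite image while refining relations with $r_I(P/N)$, resp.\ $r_J(P/N)$, classes). Equivalently, the group part of every finite-index congruence on $S$ is a finite-index normal subgroup $N$ with $P/N$ of finite rank.

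For sufficiency, reduce by Proposition \ref{RF characterisation} to separating two distinct elements. The Green structure disposes of the index coordinates at once: two elements in distinct $\mathcal{R}$-classes are separated by the homomorphism of $S$ onto the two-element left-zero semigroup recording whether the first coordinate equals a fixed index, and distinct $\mathcal{L}$-classes by the analogous right-zero semigroup. The only substantive case is to separate $(i,g,j)$ from $(i,h,j)$ with $g\neq h$. As $G$ is residually finite, choose a finite-index normal $N$ with $gh^{-1}\notin N$; by hypothesis $P/N$ has finite rank, so the canonical homomorphism of $S$ onto $\mathcal{M}(G/N;\,I/{\sim_I},\,J/{\sim_J};\,\overline{P/N})$ (with $\sim_I,\sim_J$ taken for $P/N$) has finite image, fixes the common indices, and distinguishes the group coordinates $gN\neq hN$. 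Hence $S$ is residually finite.

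For necessity I would first note that a group $\mathcal{H}$-class of $S$ is a subsemigroup isomorphic to $G$, so $G$ is residually finite by Lemma \ref{subalgebra} (residual finiteness of a group as a semigroup being the same as residual finiteness as a group, since a finite semigroup image of a group is a finite group). The heart of the theorem is that $P/N$ has finite rank for \emph{every} finite-index normal $N$. By the lemma of the first paragraph, every finite-index congruence has group part of finite rank, and such a congruence separates $(i,g,j)$ from $(i,h,j)$ only if $gh^{-1}\notin N$; with Proposition \ref{RF characterisation} this already yields $\bigcap\{N\ \text{finite index}:\ P/N\ \text{of finite rank}\}=\{1\}$. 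The obstacle is to promote this to the non-existence of a bad $N_0$, say one with $r_I(P/N_0)=\infty$. Such an $N_0$ gives infinitely many columns pairwise inequivalent modulo $N_0$; in any finite quotient the index map $u$ takes finitely many values with fibres refining $\sim_I$ for $P/{\ker\omega}$, so by pigeonhole two of these columns are identified, and tracking the nontrivial group elements they carry one must manufacture a single pair $(i,g,j),(i,h,j)$ whose $gh^{-1}$ is forced into every finite-rank $N$, contradicting the triviality of the intersection above. Carrying out this last step rigorously, i.e.\ showing that the family of finite-rank subgroups is rigid enough that one bad $N_0$ pins down a fixed nontrivial element common to all of them rather than merely behaving like an abstract upward-closed family, is the crux of the argument and the place I expect the main difficulty to lie.
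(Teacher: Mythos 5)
You should first note that the paper itself contains no proof of this statement: it is quoted directly from Golubov \cite{Golubov2}, so your attempt has to stand on its own merits. Its first half largely does. The homomorphism lemma is correct: for a finite quotient with group part $N=\ker\omega$, the compatibility identity gives $\omega(p_{jk}^{-1}p_{ji})=a_k^{-1}a_i$ whenever $u(i)=u(k)$, so the fibres of $u$ (resp.\ $v$) refine $\sim_I$ (resp.\ $\sim_J$) computed for $P/N$, whence $[G:N]<\infty$ and $P/N$ has finite rank. The sufficiency argument is also essentially right, with one repairable inaccuracy: there is no ``canonical'' homomorphism onto $\mathcal{M}(G/N;\,I/\!\sim_I,\,J/\!\sim_J;\,\overline{P/N})$, because when $i\sim_I k$ the columns of $P/N$ agree only up to a right translation, so the naive map $(i,g,j)\mapsto([i],gN,[j])$ is not well defined. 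One must fix representatives $i^{*},j^{*}$ of each class, choose $c_i,d_j\in G$ with $p_{ji}N=p_{ji^{*}}c_iN$ and $p_{ji}N=d_jp_{j^{*}i}N$, and check that $(i,g,j)\mapsto([i],\,c_igd_jN,\,[j])$ is a homomorphism onto the Rees matrix semigroup with sandwich entries $p_{j^{*}i^{*}}N$; this works, and its group part is exactly $N$.

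The genuine gap is in the necessity direction, precisely where you yourself locate it, and it is not a technicality: it is the mathematical content of the theorem. What your argument actually establishes is that the group parts $\ker\omega$ of finite quotients form a family $\mathcal{K}$ of finite-index normal subgroups, each with $P/\ker\omega$ of finite rank, which is closed under finite intersections (take the product of two quotients) and has trivial intersection when $S$ is residually finite; moreover the finite-rank property passes upward, since $N\subseteq N'$ implies that $\sim_I$ for $P/N$ refines $\sim_I$ for $P/N'$. But the theorem demands that \emph{every} finite-index normal subgroup have $P/N$ of finite rank, and this cannot be extracted from those abstract properties alone: in $G=\mathbb{Z}$ the family $\{p^k\mathbb{Z}:k\geq 0\}$, for a fixed prime $p$, is upward closed, closed under finite intersections, and has trivial intersection, yet omits $q\mathbb{Z}$ for every other prime $q$. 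So one is forced to exploit the specific structure of the family: setting $D_{ik}=\{p_{j'i}^{-1}p_{j'k}p_{jk}^{-1}p_{ji}:j,j'\in J\}$, one has $i\sim_I k$ modulo $N$ if and only if $D_{ik}\subseteq N$, and the missing step is to show that an infinite set of columns pairwise inequivalent modulo a bad $N_0$ forces a \emph{single} element $z\neq 1$, $z\notin N_0$, lying in $\ker\omega$ for every finite quotient (a pigeonhole/Ramsey-type argument on the cosets of $N_0$ hit by the sets $D_{ik}$, using the finiteness of $G/N_0$ and the identities relating $D_{ik},D_{kl},D_{il}$). Your sketch gestures at exactly this but does not carry it out, and you acknowledge as much; until that step is supplied, the ``only if'' half of Golubov's theorem remains unproven.
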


In the remainder of this subsection, unless otherwise stated, $M$ will denote a monoid $S^1$ where $S$ is a completely simple semigroup.  We use Theorem \ref{Golubov} to show that the cyclic $M$-act $\textbf{M}$ may not be residually finite, even in the case that its subgroups are finite. 

\begin{ex}
Let $G=\{e, g\}$ be the cyclic group of order 2.  Let $I$ be any infinite set, fix $i_0\in I,$ and let $P=(p_{ij})$ be the $I\times I$ matrix where $p_{i_0i_0}=e,$ $p_{ii}=g$ for all $i\in I\!\setminus\!\{i_0\},$ and $p_{jk}=e$ for all $j, k\in I, j\neq k.$  Let $S=\mathcal{M}(G; I, I; P).$  It is clear that $P$ has infinite rank, so $S$ is not residually finite by Theorem \ref{Golubov}.  Thus $M=S^1$ is not residually finite, and hence $\textbf{M}$ is not residually finite by Corollary \ref{monoid separability}(1).\par
For completeness, we also directly prove that $S$ is not residually finite.  We claim that the elements $(i_0, e, i_0)$ and $(i_0, g, i_0)$ cannot be separated in a finite index congruence.  Indeed, let $\sim$ be a finite index congruence on $S.$  Then there exist $j, k\in I$ with $j\neq k$ such that $(i_0, e, j)\sim(i_0, e, k).$  It follows that $$(i_0, e, i_0)=(i_0, e, j)(k, e, i_0)\sim(i_0, e, k)(k, e, i_0)=(i_0, g, i_0),$$ as required.
\end{ex}

The following lemma will be useful in determining necessary conditions for all $M$-acts to be residually finite or all $M$-acts to be strongly subact separable.

\begin{lemma}
\label{left zero}
Let $L$ be an infinite left zero semigroup.  Then there exists an $L^1$-act that is neither residually finite nor weakly subact separable.
\end{lemma}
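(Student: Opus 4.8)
The plan is to construct explicitly a single $L^1$-act $A$ together with two distinguished elements that cannot be separated in any finite-index congruence, arranging things so that one of these elements also generates a cyclic subact; this defeats residual finiteness and weak subact separability simultaneously. Note first that the regular act $\textbf{L^1}$ will not serve, since a left zero semigroup is residually finite as a monoid (map $L$ onto a two-element left zero semigroup), so $\textbf{L^1}$ is residually finite by Corollary \ref{monoid separability}(1); a bespoke act is genuinely required. Write $L=\{s_i : i\in I\}$ with $I$ infinite. The key structural observation is that, since $s_is_j=s_i$, the act axiom forces $a(s_is_j)=(as_i)s_j$, i.e.\ $(as_i)s_j=as_i$ for all $a$ and all $j$; thus every element of the form $as_i$ must be a zero of $A$. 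So any action of $L$ has image consisting of fixed points, and this is precisely the freedom I intend to exploit.

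Concretely, I would take $A=\{a_i : i\in I\}\cup\{b, 0\}$ with all these symbols distinct, declare $b$ and $0$ to be zeros, and define the action of the generators by
$$a_is_j=\begin{cases} b & \text{if } i=j,\\ 0 & \text{if } i\neq j,\end{cases}\qquad bs_j=b,\quad 0s_j=0,$$
together with $x\cdot 1=x$ for all $x\in A.$  First I would verify that this genuinely defines an $L^1$-act: the only nontrivial associativity instances are $a(s_js_k)=(as_j)s_k,$ and since $s_js_k=s_j$ both sides reduce to $as_j$ using that $b$ and $0$ are fixed by every $s_k$; the identity acts trivially by fiat. This verification is routine rather than the crux.

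The heart of the argument is the collapsing observation: in any finite-index congruence $\sim$ on $A,$ the elements $b$ and $0$ are identified. Indeed, since $\{a_i : i\in I\}$ is infinite and $\sim$ has only finitely many classes, there exist $i\neq j$ with $a_i\sim a_j$; applying the congruence property with $s_i$ gives
$$b=a_is_i\sim a_js_i=0,$$
using $j\neq i$ in the last equality. From this single fact both halves follow: since $b\neq 0$ yet no homomorphism to a finite $L^1$-act can distinguish them, $A$ is not residually finite; and since $\{0\}$ is a cyclic subact of $A$ (as $0$ is a zero) with $b\in A\!\setminus\!\{0\}$, but again no finite quotient separates $b$ from $0,$ the act $A$ is not weakly subact separable.

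The main obstacle I anticipate is not any isolated step but ensuring the construction is legitimate, specifically that the images of the generator actions lie among the fixed points, which is exactly what the left-zero identity guarantees, and then phrasing the pigeonhole collapse cleanly. Everything else follows immediately from the definitions of the two separability conditions.
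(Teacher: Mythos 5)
Your proposal is correct and is essentially the paper's own proof: the same act (with your $0$ named $c$ in the paper), the same pigeonhole collapse $b=a_is_i\sim a_js_i=0$, and the same conclusion that failure to separate $b$ from the zero defeats both residual finiteness and weak subact separability. The additional remarks (that $\textbf{M}$ itself would not work, and that images of generators must be fixed points) are sound but not needed.
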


\begin{proof}
Let $A=\{a_x : x\in L\}\cup\{b, c\}.$  Define $a1=a$ for all $a\in A,$ and
$$a_xy=
\begin{cases}
b&\text{if }x=y\\
c&\text{if }x\neq y,
\end{cases}$$
$$by=b\,\text{ and }\,cy=c$$
for all $x, y\in L.$  It is easily verified that $A$ is an $L^1$-act with the above action.  We claim that we cannot separate $b$ from the trivial subact $\{c\}$ in a finite index congruence, so $A$ is neither residually finite nor weakly subact separable.  Indeed, if $\sim$ is a finite index congruence on $A,$ then there exist $x, y\in L$ with $x\neq y$ such that $a_x\sim a_y.$  It follows that 
$$b=a_xx\sim a_yx=c,$$
as required.
\end{proof}

We now provide some necessary conditions for all $M$-acts to be residually finite.

\begin{prop}
\label{prop:CS,RF}
Let $S=\mathcal{M}(G; I, J; P)$ and let $M=S^1.$  If all $M$-acts are residually finite, then $I$ is finite and $G$ is strongly subgroup separable.
\end{prop}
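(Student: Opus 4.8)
The plan is to prove the two necessary conditions separately, in each case producing an $M$-act that fails residual finiteness whenever the condition fails, and then contraposing.

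First I would handle the claim that $G$ must be strongly subgroup separable. Since $S = \mathcal{M}(G;I,J;P)$ contains a copy of $G$ (for instance, fixing a normalised $P$, the set $\{(i_0, g, j_0) : g \in G\}$ with $p_{j_0 i_0} = 1$ is a subsemigroup isomorphic to $G$, whose identity $(i_0, 1, j_0)$ is idempotent), I would try to exhibit $G$ as a retract of $M$, or at least relate $G$-acts to $M$-acts so that I can invoke Proposition \ref{retract} or the transfer results. The cleanest route is: if all $M$-acts are residually finite, then by Corollary \ref{all RF implies all SSS} all $M$-acts are strongly subact separable, but more to the point I want all $G$-acts to be residually finite, since by Theorem \ref{all G-acts RF} this is equivalent to $G$ being strongly subgroup separable. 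So the key step is to show that residual finiteness of all $M$-acts forces residual finiteness of all $G$-acts. I would do this by viewing any $G$-act $A$ as an $M$-act: pick an idempotent-generated subgroup embedding and extend the $G$-action to an $M$-action (sending elements outside the relevant $\mathcal{H}$-class of $S$ to act trivially or via the group homomorphism structure), so that a separating finite $M$-act restricts to a separating finite $G$-act. The natural tool here is Proposition \ref{submonoid} or Proposition \ref{retract}, provided I can realise $G^1$ (or $G$) as a retract of, or an ideal-complement submonoid inside, $M$.

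Second I would show $I$ must be finite. This is where Lemma \ref{left zero} does the work. If $I$ is infinite, I want to locate an infinite left zero semigroup inside $M$ and then apply the lemma to get an $M$-act that is not residually finite. In a Rees matrix semigroup, fixing a single column index $j \in J$ and a single group element, the set $\{(i, 1, j) : i \in I\}$ forms (after suitable normalisation of $P$ so that $p_{ji} = 1$ for all relevant $i$, or by passing to the quotient $P/N$) a subsemigroup on which the product $(i,1,j)(k,1,j) = (i, p_{jk}, j)$ behaves like a left zero semigroup on the first coordinate once the group part is collapsed. So the plan is: first collapse $G$ to the trivial group via a congruence (legitimate since $M$-act residual finiteness descends to quotients of $M$ by Corollary \ref{all quotients RF}, and quotients of $M$ are of the same completely-simple-with-identity form), reducing to the case $G$ trivial, where $\{(i,j) : i \in I\}$ for fixed $j$ is genuinely an infinite left zero semigroup. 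Then $L = \{(i,j): i\in I\}$ with $L^1$ a retract or ideal-complement submonoid of the collapsed $M$ lets me apply Lemma \ref{left zero} to produce a non-residually-finite act, contradicting the hypothesis via Proposition \ref{submonoid} or \ref{retract}.

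The main obstacle I anticipate is the bookkeeping of the structure matrix $P$ when embedding these substructures: the product in $\mathcal{M}(G;I,J;P)$ carries the sandwich factor $p_{jk}$, so neither the group copy nor the left-zero copy sits inside $S$ as a subsemigroup on the nose without normalising $P$ or killing $G$ first. The right strategy is therefore to reduce to the trivial-group case \emph{before} extracting the left zero semigroup, using that every $\mathcal{H}$-class structure simplifies once $G$ is collapsed; and to isolate a single $\mathcal{R}$-class or $\mathcal{L}$-class so that the sandwich entries that appear are all normalised to the identity. Getting the adjoined-identity issue right — ensuring that the sub-left-zero-semigroup with its identity adjoined is honestly a retract or has ideal complement in $M = S^1$, so that the transfer propositions apply — is the delicate part, but once $G$ is trivialised and $P$ normalised along the chosen class, it should fall out cleanly.
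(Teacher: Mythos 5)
Your first half (finiteness of $I$) is essentially the paper's argument and is sound: after normalising $P$ (legitimate by Rees' theorem), the set $L=I\times\{1_G\}\times\{j_0\}$ is already a left zero subsemigroup of $S,$ and $L^1$ is a retract of $M$ via $1\mapsto 1,$ $(i,g,j)\mapsto(i,1_G,j_0),$ so Proposition \ref{retract} and Lemma \ref{left zero} finish the job; your preliminary step of collapsing $G$ is unnecessary. Be careful, though, with your citation: Corollary \ref{all quotients RF} says that quotient \emph{monoids} of $M$ are residually finite, not that all \emph{acts} over such quotients are residually finite. The transfer you actually need (an $M/\rho$-act inflates to an $M$-act along the quotient map, with the same finite index act congruences) is true and easy, but it is a different statement and you would have to prove it.

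The genuine gap is in the group half. Your plan hinges on realising $G^1$ as a retract of $M,$ or as a submonoid whose complement is an ideal, and neither holds in general. With $P$ normalised, a retraction of $M$ onto $N=\{1\}\cup\{(i_0,g,j_0):g\in G\}$ amounts to a map $f:S\to G$ with $f(i_0,g,j_0)=g$ and $f(i,gp_{jk}h,l)=f(i,g,j)f(k,h,l)$; writing $q_i=f(i,e,j_0)$ and $r_j=f(i_0,e,j)$ forces $f(i,g,j)=q_igr_j$ and hence $p_{jk}=r_jq_k$ for all $j,k,$ i.e.\ $P$ factors as a column times a row, which fails for most matrices. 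Likewise $M\!\setminus\!N$ is not an ideal: $(i_0,g,j)(k,h,j_0)=(i_0,gp_{jk}h,j_0)\in N$ even though neither factor lies in $N$ (when $|I|,|J|\geq 2$), and the same computation kills the idea of extending a $G$-act to an $M$-act by letting elements outside the group $\mathcal{H}$-class act trivially: a product of two such elements can land in the $\mathcal{H}$-class with an arbitrary group component, so the extended assignment would not be an action. The missing idea, which is what the paper does, is an induced-act construction: given a subgroup $H$ of $G$ and $Hx\neq Hy,$ make $A=G/H\times J$ into an $M$-act by $(Hg,j)(i,g^{\prime},k)=(H(gp_{ji}g^{\prime}),k)$; by hypothesis $A$ is residually finite, so some $M$-homomorphism $\theta:A\to C$ with $C$ finite separates $(Hx,j_0)$ from $(Hy,j_0)$; then $C$ becomes a $G$-act via $c\ast g=c(i_0,g,j_0),$ and $Hg\mapsto(Hg,j_0)\theta$ is a $G$-homomorphism separating $Hx$ from $Hy.$ This shows every cyclic $G$-act is residually finite, whence $G$ is strongly subgroup separable by Theorem \ref{all G-acts RF}.
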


\begin{proof}
We may assume that $P$ is normalised, so that there exist $i_0\in I$ and $j_0\in J$ such that $p_{ji_0}=p_{j_0i}=e$ for all $i\in I$ and $j\in J,$ where $e$ denotes the identity of $G.$
It is easy to see that $L=I\times\{1_G\}\times\{j_0\}$ is a left zero semigroup and that $L^1$ is a retract of $M$ via the homomorphism $\theta : M\to L^1$ given by $1\theta=1$ and $(i, g, j)\theta=(i, 1_G, j_0).$  Therefore, by Proposition \ref{retract}, all $L^1$-acts are residually finite.  It follows from Lemma \ref{left zero} that $L$ is finite, and hence $I$ is finite.\par
We now prove that every cyclic $G$-act is residually finite, from which it follows that $G$ is strongly subgroup separable by Theorem \ref{all G-acts RF}.  Every cyclic $G$-act is isomorphic to $G/H=\{Hg : g\in G\}$ for some subgroup $H$ of $G.$  So, let $H$ be a subgroup of $G,$ and let $x, y\in G$ be such that $Hx\neq Hy.$  We set $A=G/H\times J,$ and define an action of $M$ on $A$ by $(Hg, j)(i, g^{\prime}, k)=(H(gp_{ji}g^{\prime}), k)$ and $(Hg, j)1=(Hg, j).$  It is easily verified that $A$ is an $M$-act with the above action.  Since $A$ is residually finite, there exist a finite $M$-act $C$ and an $M$-homomorphism $\theta : A\to C$ such that $(Hx, j_0)\theta\neq(Hy, j_0)\theta.$  It is easy to see that $C$ is a $G$-act via $c\ast g=c(i_0, g, j_0).$  We now define a map 
$$\phi : G/H\to C, Hg\mapsto(Hg, j_0)\theta.$$
Using the fact that $\theta$ is an $M$-homomorphism, for any $g, g^{\prime}\in G$ we have
\begin{align*}
\bigl((Hg)g^{\prime}\bigr)\phi&=\bigl(H(gg^{\prime}))\phi=\bigl(H(gg^{\prime}), j_0\bigr)\theta=\bigl((Hg, j_0)(i_0, g^{\prime}, j_0)\bigr)\theta=\bigl((Hg, j_0)\theta\bigr)(i_0, g^{\prime}, j_0)\\&=\bigl((Hg)\phi\bigr)\ast g^{\prime},
\end{align*}
so $\phi$ is a $G$-homomorphism.  Moreover, we have that $(Hx)\phi\neq(Hy)\phi.$  Thus $G/H$ is residually finite, as desired.
\end{proof}

The converse of Proposition \ref{prop:CS,RF} does not hold.  In fact, the conditions that $I$ is finite and $G$ is strongly subgroup separable need not even imply that all {\em cyclic} $M$-acts are residually finite.

\begin{ex}
Let $G$ be any infinite group and let $e$ denote its identity.  Let $I$ and $J$ be any non-empty index sets with $|J|\geq 2.$  Fix distinct elements $j, k\in J,$ and let $P$ be any $J\times I$ matrix over $G$ such that $p_{ji}=p_{ki}=e$ for all $i\in I.$  Let $S=\mathcal{M}(G; I, J; P)$ and let $M=S^1.$  Now fix $i_0\in I$ and let
$$\rho=\bigl\{\bigl((i_0, g, j), (i_0, g, k)\bigr) : g\in G\!\setminus\!\{e\}\bigr\}\cup\{(m, m) : m\in M\}.$$
It is straightforward to check that $\rho$ is a right congruence on $M.$  Let $A$ denote the cyclic $M$-act $\textbf{M}/\rho.$  We claim that we cannot separate the distinct elements $a=[(i_0, e, j)]_{\rho}$ and $b=[(i_0, e, k)]_{\rho}$ in a finite index congruence on $A,$ and hence $A$ is not residually finite.  Indeed, let $\sim$ be a finite index congruence on $A.$  Then there exist $g, h\in G$ with $g\neq h$ such that $[(i_0, g, k)]_{\rho}\sim[(i_0, h, k)]_{\rho}.$  It follows that
$$b=[(i_0, g, k)]_{\rho}(i_0, g^{-1}, k)\sim[(i_0, h, k)]_{\rho}(i_0, g^{-1}, k)=[(i_0, hg^{-1}, k)]_{\rho}=[(i_0, hg^{-1}, j)]_{\rho},$$
and thus $$a=b(i_0, e, j)\sim[(i_0, hg^{-1}, j)]_{\rho}(i_0, e, j)=[(i_0, hg^{-1}, j)]_{\rho}\sim b,$$
as required.
\end{ex}

\begin{prob}
Let $S=\mathcal{M}(G; I, J; P)$ and let $M=S^1.$  Under what conditions are all (finitely generated) $M$-acts residually finite?
\end{prob}

The following result determines when $M$ satisfies the condition that all $M$-acts are strongly subact separable (or, equivalently, all $M$-acts are weakly subact separable).

\begin{thm}
\label{CS, RF}
Let $S=\mathcal{M}(G; I, J; P)$ and let $M=S^1.$  Then the following are equivalent:
\begin{enumerate}
\item all $M$-acts are strongly subact separable;
\item all $M$-acts are weakly subact separable;
\item $I$ is finite.
\end{enumerate}
\end{thm}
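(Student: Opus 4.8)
The plan is to reduce everything to results already established earlier in the paper. First, the equivalence of (1) and (2) is immediate from Proposition \ref{all acts SSS}, which shows that all $M$-acts are strongly subact separable precisely when all $M$-acts are weakly subact separable. It therefore suffices to prove the two implications (3)\,$\Rightarrow$\,(1) and (2)\,$\Rightarrow$\,(3).

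For (3)\,$\Rightarrow$\,(1), I would compute Green's relation $\mathcal{R}$ on the monoid $M=S^1.$ Using the Rees matrix description, for $a=(i,g,j)\in S$ one checks that $aM=\{i\}\times G\times J,$ so that two elements of $S$ are $\mathcal{R}$-related exactly when they lie in the same row, while $1M=M$ forms its own $\mathcal{R}$-class. Hence $M$ has exactly $|I|+1$ $\mathcal{R}$-classes. When $I$ is finite this number is finite, and Proposition \ref{finitely many R-classes} immediately yields that all $M$-acts are strongly subact separable. Note that no hypothesis on $J$ is needed here, which matches the fact that the statement constrains only $I.$

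For (2)\,$\Rightarrow$\,(3), I would argue contrapositively via a retract. Normalising $P,$ fix $i_0\in I$ and $j_0\in J$ with $p_{ji_0}=p_{j_0i}=e$ for all $i\in I$ and $j\in J.$ As in the proof of Proposition \ref{prop:CS,RF}, the set $L=I\times\{1_G\}\times\{j_0\}$ is a left zero subsemigroup of $S,$ and the map $\theta : M\to L^1$ given by $1\theta=1$ and $(i,g,j)\theta=(i,1_G,j_0)$ is a retraction, so $L^1$ is a retract of $M.$ By Proposition \ref{retract}, if all $M$-acts are weakly subact separable then so are all $L^1$-acts. But Lemma \ref{left zero} shows that if $L$ were infinite there would exist an $L^1$-act that is not weakly subact separable. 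Hence $L$ is finite, and since $|L|=|I|$ we conclude that $I$ is finite.

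The argument is essentially bookkeeping on top of the machinery already developed; the only steps requiring care are the computation of the $\mathcal{R}$-classes of $S^1$ and the verification that $\theta$ is a genuine retraction onto the left zero semigroup $L^1.$ For the latter, normalising $P$ so that the $j_0$-row and $i_0$-column of $P$ are trivial is exactly what forces $L$ to be a left zero semigroup and $\theta$ to be a homomorphism. Neither step presents a real obstacle given Rees's theorem together with Proposition \ref{finitely many R-classes}, Proposition \ref{retract} and Lemma \ref{left zero}.
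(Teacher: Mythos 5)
Your proposal is correct and takes essentially the same route as the paper: (1)$\Leftrightarrow$(2) via Proposition \ref{all acts SSS}, (3)$\Rightarrow$(1) via Proposition \ref{finitely many R-classes}, and (2)$\Rightarrow$(3) by the retract $L^1$ of $M$ combined with Lemma \ref{left zero}. The only difference is presentational: you spell out the computation of the $\mathcal{R}$-classes of $S^1$ and re-verify that $\theta$ is a retraction, details the paper delegates to Proposition \ref{prop:CS,RF} or leaves implicit.
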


\begin{proof}
(1) and (2) are equivalent by Proposition \ref{all acts SSS}.\par
(2)\,$\Rightarrow$\,(3).  It was shown in Proposition \ref{prop:CS,RF} that the set $L^1,$ where $L=I\times\{1_G\}\times\{1\},$ is a retract of $M.$  Therefore, all $L^1$-acts are weakly subact separable by Proposition \ref{retract}.  It follows from Lemma \ref{left zero} that $I$ is finite, since otherwise there would exist a weakly subact separable $L^1$-act.\par
(3)\,$\Rightarrow$\,(1).  This follows from Proposition \ref{finitely many R-classes}, since $M$ has finitely many $\mathcal{R}$-classes.
\end{proof}

\begin{thm}
Let $S=\mathcal{M}(G; I, J; P)$ and let $M=S^1.$  Then all finitely generated $M$-acts are strongly subact separable.
\end{thm}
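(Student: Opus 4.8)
The plan is to invoke Proposition \ref{all fg acts SSS}, which reduces the claim to its equivalent form (7): it suffices to show that in any cyclic $M$-act $A$ possessing a zero $0$, every $a\neq 0$ can be separated from $0$ by a finite $M$-act. So I would fix a generator $a_0$ of $A$ and write $A=\langle a_0\rangle=\{a_0\}\cup a_0S$. Since $0$ being the generator would force $A=\{0\}$ (leaving no $a\neq 0$), one has $a_0\neq 0$ and hence $0\in a_0S$.

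The heart of the argument is a structural analysis of the ``rows'' of the orbit. For each $i\in I$ put $B_i=a_0(\{i\}\times G\times J)$. Because $(i,g,j)(k,h,l)=(i,gp_{jk}h,l)$ preserves the first coordinate, each $B_i$ is a subact. The key computation is that every $B_i$ is \emph{homogeneous}: $xS=B_i$ for all $x\in B_i$. Indeed, given $x=a_0(i,g,j)$ and any target $a_0(i,g',j')\in B_i$, choosing $k\in I$ and $h=p_{jk}^{-1}g^{-1}g'$ yields $x(k,h,j')=a_0(i,g',j')$. From this I would extract three consequences: (a) two rows are either equal or disjoint, since a common element generates both; (b) if $0\in B_i$ then, applying homogeneity to $x=0$, we get $B_i=0S=\{0\}$; and (c) $a_0\notin a_0S$, since otherwise $a_0\in B_k$ for some $k$, whence homogeneity gives $a_0S=B_k$ and so $A=B_k$, and then (b) forces $B_k=\{0\}$ and $A=\{0\}$, a contradiction. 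Thus $A=\{a_0\}\sqcup a_0S$, with $a_0$ a genuine apex.

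With this in place the separation is immediate. If $a=a_0$, the Rees congruence of the subact $a_0S$ (equivalently, the two-class partition $\{a_0\}\mid a_0S$) has index $2$ and separates $a_0$ from $0\in a_0S$. If instead $a\in a_0S$, then $a\in B_i$ for some row $i$ with $B_i\neq\{0\}$, so $0\notin B_i$ by (b), while $a_0S\setminus B_i$ is a subact containing $0$ by (a). I would then map $A$ onto the three-element $M$-act $\{\hat a_0,\alpha,z\}$, where $\alpha,z$ are zeroes, by sending $a_0\mapsto\hat a_0$, $B_i\mapsto\alpha$ and $a_0S\setminus B_i\mapsto z$, with $\hat a_0\cdot s=\alpha$ when $s$ lies in row $i$ and $\hat a_0\cdot s=z$ otherwise. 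Associativity holds because the row of $st$ equals the row of $s$, so this is a genuine $M$-homomorphism, and it sends $a\mapsto\alpha\neq z\mapsfrom 0$.

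The main obstacle is the homogeneity computation together with its consequence (b): it is exactly the dichotomy that each row is either the singleton $\{0\}$ or entirely zero-free that neutralises the potential infiniteness of $I$, since it lets the whole complement of the relevant row be crushed to a single zero. Everything else—checking that the three-element target is a well-defined finite $M$-act and that the map respects the action—is routine, and it is worth noting that the argument uses no hypothesis on $G$, $I$ or $J$ (in particular no normalisation of $P$ and no residual finiteness of $G$), which is what one expects given that the statement imposes none.
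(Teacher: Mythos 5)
Your structural analysis is correct and rests on the same algebraic computation as the paper's proof (in a completely simple semigroup one can always solve $(i,g,j)st=(i,g,j)$ for $t$), but the final step, as written, fails. The trouble is that you define the action of $S$ on $\hat a_0$ by the \emph{index} of the row containing $s$ (``$\hat a_0\cdot s=\alpha$ when $s$ lies in row $i$''), whereas $\theta$ is defined by membership in the \emph{subset} $B_i\subseteq A$; these two notions disagree precisely when $B_k=B_i$ for some $k\neq i$, a possibility your own observation (a) explicitly allows. Concretely, take $G$ trivial, $J$ a singleton and $I=\{1,2,3\},$ so that $S=\{s_1,s_2,s_3\}$ is a left zero semigroup, and let $A=\{a_0,c,0\}$ be the cyclic $M$-act (easily checked to be one) with $a_0s_1=a_0s_2=c,$ $a_0s_3=0,$ and $c,0$ zeroes. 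Then $B_1=B_2=\{c\}$ and $B_3=\{0\}.$ Taking $a=c$ and $i=1,$ your map sends $a_0s_2=c$ to $\alpha,$ yet $(a_0\theta)\cdot s_2=\hat a_0\cdot s_2=z$ because $s_2$ does not lie in row $1$; so $\theta$ is not an $M$-homomorphism.

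The gap is local and easily repaired: define $\hat a_0\cdot s=\alpha$ if and only if $a_0s\in B_i$ (equivalently, the row of $s$ maps onto $B_i$), and associativity survives because $B_i$ and $a_0S\!\setminus\!B_i$ are subacts; better still, dispense with the target act altogether and observe that the partition of $A$ into $\{a_0\},$ $B_i$ and $a_0S\!\setminus\!B_i$ is a congruence of index $3,$ since every class is either a singleton or a subact, and it separates $a$ from $0.$ For comparison, the paper takes $A=\textbf{M}/\rho$ for a right congruence $\rho,$ proves that $[1]_{\rho}=\{1\}$ and that $bs\neq 0$ for every $b\neq 0$ and $s\in S$ (your homogeneity computation in another guise), and then uses the single partition $\{[1]_{\rho}\},$ $\{0\},$ $A\!\setminus\!\{[1]_{\rho},0\}$ --- one congruence that separates every $a\neq 0$ from $0$ at once, with no need for the row decomposition and hence no index-versus-subset pitfall. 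Your route yields slightly finer information (the row structure of $a_0S$), but at the cost of the bookkeeping that tripped up the final map.
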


\begin{proof}
Let $A$ be a cyclic $M$-act with a zero $0,$ and consider an element $a\in A\!\setminus\!\{0\}.$  By Proposition \ref{all fg acts SSS}, it suffices to prove that we can separate $a$ and $0.$  Now, by Proposition \ref{cyclic act, right congruence}, we may assume that $A=\textbf{M}/\rho$ for some right congruence $\rho$ on $M.$  Clearly $[1]_{\rho}\neq 0,$ since this would imply that $a=0.$  We shall prove that $[1]_{\rho}=\{1\}$ and that $A\!\setminus\!\{[1]_{\rho}, 0\}$ is a subact of $A.$  It is then clear that the equivalence relation on $A$ with classes $\{[1]_{\rho}\}, \{0\}$ and $A\!\setminus\!\{[1]_{\rho}, 0\}$ is a congruence on $A$ that separates $a$ and $0,$ as desired.\par  
Consider $b=[(i, g, j)]_{\rho}$ with $b\neq 0,$ and suppose that $bs=0$ for some $s=(k, h, l)\in S.$  Letting $t=(i, p_{li}^{-1}h^{-1}p_{jk}^{-1}, j),$ we have that $(i, g, j)=(i, g, j)st.$  But then $$b=(bs)t=0t=0,$$ which is a contradiction, so $bs\neq 0.$\par
Now suppose for a contradiction that $1\,\rho\,(i, g, j)$ for some $(i, g, j)\in S.$  Letting $b=[(i, g, j)]_{\rho}$ and choosing $s\in S$ such that $0=[s]_{\rho},$ we obtain $$bs=[(i, g, j)]_{\rho}s=[1]_{\rho}s=[s]_{\rho}=0.$$  But we have already established that $bs\neq 0,$ so we have a contradiction and $[1]_{\rho}=\{1\}.$\par
Now, for any $c\in A\!\setminus\!\{[1]_{\rho}, 0\}$ and $s\in S,$ it is clear that $cs\neq[1]_{\rho},$ and the argument above proves that $cs\neq 0,$ so we conclude that $A\!\setminus\!\{[1]_{\rho}, 0\}$ is a subact of $A.$  This completes the proof.
\end{proof}

In what follows we shall determine when all $M$-acts are completely separable and when all finitely generated $M$-acts are completely separable.  In order to do so, we first provide the following technical result, which characterises certain sets of the form given in Definition \ref{CS defn}.

\begin{lemma}
\label{CS lemma}
Let $S=\mathcal{M}(G; I, J; P)$ where $P$ is normalised; in particular, there exists $i_0\in I$ such that $p_{ji_0}=e$ for all $j\in J,$ where $e$ denotes the identity of $G.$  Let $M$ denote the monoid $S^1,$ let $A$ be an $M$-act, and let $a\in A.$
For any $b\in A\!\setminus\!\{a\}$ with $a\leq_A b,$ we have $[a, b]_M=U_b\times J^{\prime},$ where
$$U_b=\{(i, g)\in I\times G : a=b(i, g, j)\emph{ for some }j\in J\}, \;J^{\prime}=\{j\in J : a=a(i_0, e, j)\}.$$
\end{lemma}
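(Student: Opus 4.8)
The plan is to prove the claimed equality $[a,b]_M = U_b \times J'$ by establishing both inclusions, relying throughout on the multiplication rule in the Rees matrix semigroup together with the normalisation hypothesis $p_{ji_0}=e$. First I would unpack the definitions: by Definition \ref{CS defn}, $[a,b]_M$ consists of those $m\in M$ with $a=bm$. Since $a\neq b$, the identity $1$ of $M$ does not lie in $[a,b]_M$, so every element of $[a,b]_M$ has the form $(i,g,j)\in S$. Thus the task reduces to identifying precisely which triples $(i,g,j)$ satisfy $a=b(i,g,j)$, and the assertion is that this happens exactly when $(i,g)\in U_b$ and $j\in J'$.

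For the inclusion $[a,b]_M\subseteq U_b\times J'$, suppose $a=b(i,g,j)$. The membership $(i,g)\in U_b$ is then immediate from the definition of $U_b$ (the witnessing element of $J$ being $j$ itself). The substantive point is showing $j\in J'$, i.e.\ $a=a(i_0,e,j)$. The key computation is to act on both sides of $a=b(i,g,j)$ by a suitable further element and use associativity: I would compute $a(i_0,e,j) = b(i,g,j)(i_0,e,j)$ and simplify the product $(i,g,j)(i_0,e,j)$ in $S$. Using the multiplication rule $(i,g,j)(i_0,e,j)=(i,\,g\,p_{ji_0}\,e,\,j)=(i,g,j)$, where the normalisation $p_{ji_0}=e$ is exactly what makes this collapse, we get $a(i_0,e,j)=b(i,g,j)=a$, hence $j\in J'$. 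This is where the hypothesis on $i_0$ is essential.

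For the reverse inclusion $U_b\times J'\subseteq [a,b]_M$, take $(i,g)\in U_b$ and $j'\in J'$; I must show $a=b(i,g,j')$. By definition of $U_b$ there is some $j\in J$ with $a=b(i,g,j)$, and by definition of $J'$ we have $a=a(i_0,e,j')$. Combining these, $b(i,g,j') $ should be rewritten so as to factor through the known relation. I would compute $b(i,g,j')$ by first observing $a(i_0,e,j')=b(i,g,j)(i_0,e,j')$ and simplifying $(i,g,j)(i_0,e,j')=(i,\,g\,p_{ji_0},\,j')=(i,g,j')$ again by normalisation; this gives $a=a(i_0,e,j')=b(i,g,j')$, as required. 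Thus $(i,g,j')\in[a,b]_M$.

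I expect the main (though still modest) obstacle to be bookkeeping the Rees matrix products carefully and ensuring the normalisation $p_{ji_0}=e$ is invoked in precisely the right spot in each direction, since the whole argument hinges on the products $(i,g,j)(i_0,e,j')$ collapsing to $(i,g,j')$. A secondary point worth stating explicitly at the outset is that $1\notin[a,b]_M$ because $a\neq b$, which justifies restricting attention to triples in $S$ and guarantees that $[a,b]_M$ really is of the product form $U_b\times J'$ rather than possibly containing the identity. The hypothesis $a\leq_A b$ is what ensures $[a,b]_M$ is non-empty (so that $U_b$ and the relation defining $J'$ are genuinely populated), making the stated description meaningful.
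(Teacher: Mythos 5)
Your proposal is correct and follows essentially the same route as the paper's proof: both inclusions are established by multiplying by $(i_0,e,j)$ and collapsing the product via the normalisation $p_{ji_0}=e$, exactly as in the paper. Your explicit remark that $1\notin[a,b]_M$ because $a\neq b$ (so only triples from $S$ need be considered) is left implicit in the paper but is a worthwhile clarification, not a deviation.
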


\begin{proof}
Let $b\in A\!\setminus\!\{a\}$ be such that $a\leq_A b.$  First let $(i, g, j)\in[a, b]_M.$  Then $a=b(i, g, j),$ so $(i, g)\in U_b.$  Moreover, we have that $$a=b(i, g, j)=b(i, g, j)(i_0, e, j)=a(i_0, e, j),$$ so $j\in J^{\prime}.$  Thus $(i, g, j)\in U_b\times J^{\prime},$ and hence $[a, b]_M\subseteq U_b\times J^{\prime}.$\par
For the reverse containment, let $(i, g, j)\in U_b\times J^{\prime}.$  Then there exists $k\in J$ such that $a=b(i, g, k),$ and $a=a(i_0, e, j).$  It follows that $$a=b(i, g, k)(i_0, e, j)=b(i, g, j),$$ so $(i, g, j)\in[a, b]_M.$  Thus $U_b\times J^{\prime}\subseteq[a, b]_M,$ and hence $[a, b]_M=U_b\times J^{\prime}.$
\end{proof}

\begin{thm}
\label{CS, CS}
Let $S=\mathcal{M}(G; I, J; P)$ and let $M=S^1.$  Then all $M$-acts are completely separable if and only if both $G$ and $I$ are finite.
\end{thm}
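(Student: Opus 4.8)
The plan is to prove each direction using the characterisation of complete separability in terms of the sets $[a,b]_M$ (Theorem \ref{CS condition}). Throughout I would assume, via \cite[Theorem 3.4.2]{Howie}, that $P$ is normalised, fixing $i_0\in I$ and $j_0\in J$ with $p_{ji_0}=p_{j_0i}=e$ for all $i\in I$ and $j\in J$; this affects neither $G$ nor $I$.

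For the forward direction, suppose all $M$-acts are completely separable. First I would deduce that $I$ is finite: complete separability implies strong subact separability (Proposition \ref{algebra separability}(1)), so all $M$-acts are strongly subact separable, whence $I$ is finite by Theorem \ref{CS, RF}. To obtain that $G$ is finite, I would restrict attention to the act $\textbf{M}$, which is completely separable by hypothesis; by Corollary \ref{monoid separability}(4) the monoid $M$ is itself completely separable, so by Corollary \ref{CS monoid} the set $\{[a,b]:b\in M\}$ is finite for every $a\in M$. Taking $a=(i_0,e,j_0)$ and $b=(i_0,h,j_0)$ with $h\in G\!\setminus\!\{e\}$, a direct computation using $p_{j_0i}=e$ gives $[a,b]=\{(i,h^{-1},j_0):i\in I\}$; since distinct $h$ yield distinct such sets, the collection $\{[a,b]:b\in M\}$ would be infinite if $G$ were infinite, a contradiction. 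Hence $G$ is finite.

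For the converse, suppose $G$ and $I$ are finite (here $J$ may be infinite, so $M$ need not be finite and Corollary \ref{finite monoid} does not apply directly). Let $A$ be any $M$-act and $a\in A$; by Theorem \ref{CS condition} it suffices to show that $\{[a,b]_M:b\in A\}$ is finite. If $a\not\leq_A b$ then $[a,b]_M=\emptyset$, and $[a,a]_M$ contributes a single set. For the remaining $b$ (those with $b\neq a$ and $a\leq_A b$), Lemma \ref{CS lemma} gives $[a,b]_M=U_b\times J^{\prime}$, where $J^{\prime}\subseteq J$ depends only on $a$ and $U_b\subseteq I\times G$. The crucial point is that the potentially infinite factor $J^{\prime}$ is fixed once $a$ is chosen, so the only variation comes from $U_b$, which ranges over subsets of the finite set $I\times G$. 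Thus there are at most $2^{|I\times G|}$ possibilities for $U_b\times J^{\prime}$, and so $\{[a,b]_M:b\in A\}$ is finite, as required.

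The main obstacle is the converse: because $J$ may be infinite, $M$ itself may be infinite, so one cannot simply invoke finiteness of $M$. The content of the argument is the observation---encapsulated in Lemma \ref{CS lemma}---that the $J$-dependence of $[a,b]_M$ is confined to the fixed factor $J^{\prime}$, so that only the finite data $U_b\subseteq I\times G$ varies with $b$. In the forward direction the only subtlety is that $G$ does not embed as a submonoid of $M$ (its identity is not $1$), so the retract machinery of Proposition \ref{retract} is unavailable; instead one detects the size of $G$ directly inside $\textbf{M}$ via Corollary \ref{CS monoid}.
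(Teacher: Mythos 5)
Your proposal is correct, and its converse direction (the substantial one) is exactly the paper's argument: reduce via Theorem \ref{CS condition} to finiteness of $\{[a,b]_M : b\in A\}$, discard the $b$ with $a\not\leq_A b$, and use Lemma \ref{CS lemma} to see that only the factor $U_b\subseteq I\times G$ varies, which admits finitely many values when $G$ and $I$ are finite. The forward direction differs in the routing, though not in substance. For finiteness of $I$, the paper passes through residual finiteness and Proposition \ref{prop:CS,RF}, whereas you pass through strong subact separability and Theorem \ref{CS, RF}; both are immediate consequences of earlier results. For finiteness of $G$, the paper argues that $M$ is completely separable (Corollary \ref{monoid separability}(4)), that the maximal subgroup $G$ inherits complete separability by Lemma \ref{subalgebra}, and that a completely separable group is finite by the cited result of \cite{Miller}; you instead detect $G$ directly inside $M$ by computing, for $a=(i_0,e,j_0)$ and $b=(i_0,h,j_0)$, that $[a,b]=I\times\{h^{-1}\}\times\{j_0\}$, so that distinct $h$ give distinct sets and Corollary \ref{CS monoid} forces $G$ to be finite. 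Your computation is correct (with $P$ normalised, $b(i,g,j)=(i_0,hg,j)$, so $bm=a$ forces $j=j_0$ and $g=h^{-1}$), and it has the merit of being self-contained: it bypasses the external reference and also the mild subtlety, which you rightly flag, that $G$ sits inside $M$ only as a subsemigroup (its identity is $(i_0,e,j_0)$, not $1$), so applying Lemma \ref{subalgebra} in the monoid setting requires the tacit identification of monoid and semigroup congruences. What you lose is brevity; what the paper's route buys is reuse of machinery already in place, at the cost of leaning on that identification and on \cite{Miller}.
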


\begin{proof}
($\Rightarrow$) Since the $M$-act $\textbf{M}$ is completely separable, the monoid $M$ is completely separable by Corollary \ref{monoid separability}(1).  Since $G$ is (isomorphic to) a maximal subgroup of $M,$ it is completely separable by Lemma \ref{subalgebra} and hence finite by \cite[Lemma 2.4]{Miller}.
Since all $M$-acts are completely separable, certainly all $M$-acts are residually finite, so $I$ is finite by Proposition \ref{prop:CS,RF}.\par
($\Leftarrow$) We may assume that $P$ is normalised.  Let $A$ be an $M$-act.  We shall prove that $A$ is completely separable by showing that is satisfies the condition of Theorem \ref{CS condition}.  So, let $a\in A.$  We write $\leq$ for the preorder $\leq_A\!\!.$  For any $b\in A$ with $a\not\leq b,$ we have $[a, b]_M=\emptyset.$  Therefore, it suffices to prove that the set $$H=\{[a, b]_M : b\in A\!\setminus\!\{a\}, a\leq b\}$$ is finite.  It follows from Lemma \ref{CS lemma} that $H$ is finite if and only if there are only finitely many sets of the form $U_b$ $(b\in A\!\setminus\!\{a\}, a\leq b),$ where $U_b$ is as defined in Lemma \ref{CS lemma}.  It is easy to see that this is the case, since $I\times G$ is finite and each $U_b$ is contained in $I\times G.$
\end{proof}

\begin{thm}
\label{CS, CS, fg}
Let $S=\mathcal{M}(G; I, J; P)$ and let $M=S^1.$  Then all finitely generated $M$-acts are completely separable if and only if $G$ is finite and $P$ has finite rank.
\end{thm}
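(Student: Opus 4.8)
The plan is to reduce everything to cyclic acts and then combine the complete-separability criterion of Theorem \ref{CS condition} with the explicit description of the sets $[a,b]_M$ provided by Lemma \ref{CS lemma}. By Theorem \ref{all cyclic acts}, all finitely generated $M$-acts are completely separable if and only if all cyclic $M$-acts are, so I work throughout with a cyclic act $A=\textbf{M}/\rho$. For the forward direction I would argue as follows. Since $\textbf{M}$ is cyclic it is completely separable, so the monoid $M$ is completely separable by Corollary \ref{monoid separability}(4); as $G$ is isomorphic to a maximal subgroup of $M$, Lemma \ref{subalgebra} together with \cite[Lemma 2.4]{Miller} forces $G$ to be finite. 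Moreover $\textbf{M}$ is residually finite by Proposition \ref{algebra separability}(3), hence $M$, and therefore its subsemigroup $S$, is residually finite by Corollary \ref{monoid separability}(1) and Lemma \ref{subalgebra}. Applying Golubov's Theorem \ref{Golubov} with the finite-index normal subgroup $N=\{e\}$ (which has index $|G|<\infty$) then gives that $P=P/\{e\}$ has finite rank.

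For the converse, assume $G$ is finite and $P$ has finite rank, and (after normalising $P$) fix a cyclic act $A=\textbf{M}/\rho$ and an element $a=[m_0]_\rho$. By Theorem \ref{CS condition} it suffices to show that $\{[a,b]_M:b\in A\}$ is finite. The value $\emptyset$ (taken whenever $a\not\leq_A b$) and the single set $[a,a]_M$ are already accounted for, and by Lemma \ref{CS lemma} every remaining $[a,b]_M$ has the form $U_b\times J'$ with $J'$ depending only on $a$; so it remains to bound the number of distinct sets $U_b$ as $b$ ranges over elements with $b\neq a$ and $a\leq_A b$. Each such $b\neq[1]_\rho$ has a representative $m_1=(i',g',j')\in S$ (and $b=[1]_\rho$ contributes at most one extra set), and using $(i',g',j')(i,g,j)=(i',g'p_{j'i}g,j)$ a direct computation gives, for its $i$-th fibre,
\[
\{g\in G:(i,g)\in U_b\}=p_{j'i}^{-1}g'^{-1}W,\qquad W=\{c\in G:(i',c,j)\,\rho\,m_0\text{ for some }j\in J\}.
\]
Thus $U_b$ depends on $i$ only through the entry $p_{j'i}$, and so is completely determined by the triple consisting of row $j'$ of $P$, the element $g'\in G$, and the subset $W\subseteq G$.

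The main obstacle is this final counting step, which is exactly where finite rank is used. Since $P$ is normalised, the $\sim_J$-classes correspond to distinct rows of $P$, so finite rank yields only finitely many distinct rows; combined with $|G|<\infty$ there are finitely many choices for $g'$ and at most $2^{|G|}$ choices for $W$. Hence only finitely many distinct sets $U_b$ occur, the collection $\{[a,b]_M:b\in A\}$ is finite, and $A$ is completely separable by Theorem \ref{CS condition}; invoking Theorem \ref{all cyclic acts} once more then gives the result for all finitely generated $M$-acts. I would also record the small consistency check that, when $G$ is finite, having finitely many distinct rows forces finitely many distinct columns (each column being constant on the finitely many $\sim_J$-classes), so that the hypothesis ``$P$ has finite rank'' is genuinely what the argument requires even though only the row data is used directly.
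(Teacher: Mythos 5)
Your proof is correct and takes essentially the same route as the paper's: the forward direction via Corollary \ref{monoid separability}(4), Lemma \ref{subalgebra} and Golubov's Theorem \ref{Golubov}, and the converse by reducing to cyclic acts (Theorem \ref{all cyclic acts}) and then bounding the number of distinct sets $U_b$ from Lemma \ref{CS lemma} by the finitely many possible values of (row of $P$, element of $G$, subset of $G$). The only difference is cosmetic: you compute an explicit coset formula $p_{j'i}^{-1}g'^{-1}W$ for the fibres of $U_b$, whereas the paper proves $U_b=U_c$ by a direct element-chasing argument with the sets $Z_b$ and the relation $\sim_J$; the underlying counting data is the same.
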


\begin{proof}
($\Rightarrow$) By the same argument as the one in the proof of Theorem \ref{CS, CS}, we have that $M$ is completely separable and $G$ is finite.  Then $M$ is certainly residually finite, and hence $S$ is residually finite.  It follows from Theorem \ref{Golubov} that the matrix $P$ has finite rank.\par
($\Leftarrow$) By Theorem \ref{all cyclic acts}, it suffices to prove that all cyclic $M$-acts are completely separable.  So, let $A$ be a cyclic $M$-act and let $a\in A.$  By Proposition \ref{cyclic act, right congruence}, we may assume that $A=\textbf{M}/\rho$ for some right congruence $\rho$ on $M.$  We write $\leq$ for the preorder $\leq_A\!\!.$  We may assume that $P$ is normalised; in particular, there exists $i_0\in I$ such that $p_{ji_0}=e$ for all $j\in J,$ where $e$ denotes the identity of $G.$  By the same reasoning as that in the proof of Theorem \ref{CS, CS}, it is enough to prove that there are only finitely many sets of the form $U_b$ $(b\in A\!\setminus\!\{a\}, a\leq b),$ where $U_b$ is as defined in Lemma \ref{CS lemma}.\par
If $a=[1]_{\rho}$ and $[1]_{\rho}=\{1\},$ then there are no $b\in A\!\setminus\!\{a\}$ with $a\leq b.$  We may assume then that $a=[(i_a, g_a, j_a)]_{\rho}$ for some $(i_a, g_a, j_a)\in S.$  For each $b\in A\!\setminus\!\{a, [1]_{\rho}\},$ choose a representative $(i_b, g_b, j_b)\in S$ such that $b=[(i_b, g_b, j_b)]_{\rho}.$
Also, let
$$Z_b=\{h\in G : a=[(i_b, h, j_a)]_{\rho}\}.$$
Note that if $j\sim_Jl,$ then $p_{ji}=p_{li}$ for all $i\in I.$  (Indeed, if $p_{ji}=gp_{li}$ for all $i\in I,$ then $e=p_{ji_0}=gp_{li_0}=ge=g.$)\par
We claim that, for any $b, c\in A\!\setminus\!\{a\}$ with $a\leq b, a\leq c,$ if $g_b=g_c,$ $Z_b=Z_c$ and $j_b\sim_Jj_c,$ then $U_b=U_c.$  Since $G$ is finite and $\sim_j$ has finite index, it then follows that there are only finitely many sets of the form $U_b,$ as desired.\par
To prove the claim, first let $(k, h)\in U_b.$  Then there exists $l\in J$ such that $a=b(k, h, l).$  It follows that $(i_a, g_a, j_a)\,\rho\,(i_b, g_bp_{j_bk}h, l).$  Post-multiplying by $(i_0, e, j_a),$ we obtain $(i_a, g_a, j_a)\,\rho\,(i_b, g_bp_{j_bk}h, j_a),$ so $g_bp_{j_bk}h\in Z_b.$  From the assumptions we deduce that $g_cp_{j_ck}h=g_bp_{j_bk}h\in Z_c.$  Thus $(i_a, g_a, j_a)\,\rho\,(i_c, g_cp_{j_ck}h, j_a).$  It follows that $a=c(k, h, j_a),$ and hence $(k, h)\in U_c.$  Thus $U_b\subseteq U_c.$  A dual argument proves that $U_c\subseteq U_b,$ so $U_b=U_c,$ as required.
\end{proof}

\begin{corollary}
Let $S=\mathcal{M}(G; I, J; P)$ and let $M=S^1.$  If at least one of the index sets $I$ and $J$ is finite, then all finitely generated $M$-acts are completely separable if and only if $G$ is finite.
\end{corollary}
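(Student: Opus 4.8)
The plan is to derive this directly from Theorem \ref{CS, CS, fg}, which states that all finitely generated $M$-acts are completely separable if and only if $G$ is finite and $P$ has finite rank. The forward implication is then immediate: if all finitely generated $M$-acts are completely separable, Theorem \ref{CS, CS, fg} gives in particular that $G$ is finite, and no use of the hypothesis on $I$ and $J$ is needed here.

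For the converse, the key observation is that, under the hypothesis that at least one of $I$, $J$ is finite, finiteness of $G$ already forces $P$ to have finite rank, so the second condition in Theorem \ref{CS, CS, fg} becomes automatic. Recall that the rank of $P$ is $\max(r_I, r_J)$, where $r_I$ and $r_J$ are the indices of the equivalence relations $\sim_I$ on $I$ and $\sim_J$ on $J$. I would bound these two indices separately. Trivially $r_I\leq|I|$ and $r_J\leq|J|$. Less trivially, each $\sim_J$-class is determined by a row of $P$, that is, a function $I\to G,\, i\mapsto p_{ji}$, considered up to left translation by $G$; since passing to the quotient by the $G$-action can only decrease the count, this gives $r_J\leq|G|^{|I|}$. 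Dually, each $\sim_I$-class is determined by a column $J\to G$ up to right translation by $G$, so $r_I\leq|G|^{|J|}$.

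With these bounds the converse splits into two symmetric cases. If $I$ is finite, then $r_I\leq|I|<\infty$, while $r_J\leq|G|^{|I|}<\infty$ since both $G$ and $I$ are finite; hence $P$ has finite rank. If instead $J$ is finite, then $r_J\leq|J|<\infty$ and $r_I\leq|G|^{|J|}<\infty$, again giving finite rank. In either case, $G$ finite together with the relevant index set being finite yields that $P$ has finite rank, so Theorem \ref{CS, CS, fg} applies and all finitely generated $M$-acts are completely separable.

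I do not expect any serious obstacle: the entire content is the elementary counting estimate $r_J\leq|G|^{|I|}$ (respectively $r_I\leq|G|^{|J|}$), after which the statement is a direct appeal to Theorem \ref{CS, CS, fg}. The only point requiring a little care is to confirm, straight from the definitions of $\sim_I$ and $\sim_J$, that rows and columns really do correspond to functions into $G$ and that quotienting by the relevant one-sided $G$-translation can only reduce the number of classes.
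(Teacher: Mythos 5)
Your proof is correct, and its overall skeleton coincides with the paper's: both directions are reduced to Theorem \ref{CS, CS, fg}, with the forward implication immediate and the converse resting on the fact that finiteness of $G$ together with finiteness of one of $I$, $J$ forces $P$ to have finite rank. The difference lies in how that last fact is established. The paper simply cites Golubov (\cite[Lemma 7]{Golubov2}), whereas you prove it from scratch by the counting estimates $r_I\leq|I|$, $r_J\leq|J|$, $r_J\leq|G|^{|I|}$ and $r_I\leq|G|^{|J|}$. Your estimates are valid: if two rows of $P$ are equal as functions $I\to G$ then the corresponding elements of $J$ are $\sim_J$-related (take $g=e$ in the definition), so the $\sim_J$-classes inject into the set of rows up to left translation, giving $r_J\leq|G|^{|I|}$, and dually for columns; the two cases ($I$ finite, $J$ finite) then each bound $\max(r_I,r_J)$ by a finite quantity. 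What your route buys is self-containedness --- the reader need not consult Golubov's paper --- at the cost of a few lines of elementary counting; what the paper's route buys is brevity. Either is acceptable, and there is no gap in your argument.
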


\begin{proof}
If all finitely generated $M$-acts are completely separable, then $G$ is finite by Theorem \ref{CS, CS, fg}.  Conversely, if $G$ is finite, then since at least one of the index sets $I$ and $J$ is finite, the rank of the matrix $P$ is finite by \cite[Lemma 7]{Golubov2}.  Hence, by Theorem \ref{CS, CS, fg}, all finitely generated $M$-acts are completely separable.
\end{proof}

A {\em rectangular band} is a direct product of a left zero semigroup and a right zero semigroup; equivalently, it is a completely simple semigroup whose subgroups are all trivial.

\begin{corollary}
Let $S=I\times J$ be a rectangular band and let $M=S^1.$  Then the following are equivalent:
\begin{enumerate}
\item all $M$-acts are residually finite;
\item all $M$-acts are weakly subact separable;
\item all $M$-acts are strongly subact separable;
\item all $M$-acts are completely separable;
\item $I$ is finite.
\end{enumerate}
\end{corollary}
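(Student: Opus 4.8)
The plan is to recognise the rectangular band $S = I \times J$ as the Rees matrix semigroup $\mathcal{M}(G; I, J; P)$ in which the structure group $G$ is trivial; this is exactly the characterisation of rectangular bands as the completely simple semigroups all of whose subgroups are trivial, recorded just before the statement. The single structural observation doing all the work is that $G$ is finite (indeed a one-element group), so that every hypothesis concerning $G$ in the general results of this subsection is satisfied automatically. Once this is noted, the whole chain of equivalences should follow by specialising the theorems already established for monoids of the form $\mathcal{M}(G; I, J; P)^1$.

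Concretely, I would first dispatch the weak/strong separability conditions together with finiteness of $I$. Theorem \ref{CS, RF} asserts that for $M = \mathcal{M}(G;I,J;P)^1$ the statements ``all $M$-acts are strongly subact separable'', ``all $M$-acts are weakly subact separable'' and ``$I$ is finite'' are equivalent, which gives $(2)\Leftrightarrow(3)\Leftrightarrow(5)$ with no further effort. Next I would fold in complete separability: Theorem \ref{CS, CS} says all $M$-acts are completely separable if and only if both $G$ and $I$ are finite, and since here $G$ is trivial this collapses to ``$I$ is finite'', yielding $(4)\Leftrightarrow(5)$.

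It then remains only to insert residual finiteness. One direction is immediate from Proposition \ref{algebra separability}(3): complete separability implies residual finiteness, so applying this to every $M$-act gives $(4)\Rightarrow(1)$. For the reverse, $(1)\Rightarrow(5)$, I would invoke Proposition \ref{prop:CS,RF}, which guarantees that if all $M$-acts are residually finite then $I$ is finite; the accompanying conclusion there that $G$ is strongly subgroup separable is vacuous for the trivial group. Chaining $(4)\Rightarrow(1)\Rightarrow(5)\Rightarrow(4)$ closes the loop and places $(1)$ among the equivalent conditions, completing the argument.

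There is no genuinely hard step here: the substantive work was carried out in the preceding theorems, and the corollary is essentially their specialisation to the case of a trivial group. If any care is needed at all, it is merely in verifying that ``$G$ finite'' in Theorem \ref{CS, CS} and ``$G$ strongly subgroup separable'' in Proposition \ref{prop:CS,RF} are both satisfied trivially by the one-element group, which is routine.
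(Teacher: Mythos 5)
Your proposal is correct and follows essentially the same route as the paper: the paper likewise obtains $(1)\Rightarrow(5)$ from Proposition \ref{prop:CS,RF}, $(5)\Rightarrow(4)$ from Theorem \ref{CS, CS}, notes $(4)\Rightarrow(1)$ is immediate, and gets $(2)\Leftrightarrow(3)\Leftrightarrow(5)$ from Theorem \ref{CS, RF}. Your explicit remark that the trivial group satisfies the hypotheses on $G$ vacuously is exactly the (unstated) specialisation the paper relies on.
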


\begin{proof}
(1) implies (5) by Proposition \ref{prop:CS,RF}, (5) implies (4) by Theorem \ref{CS, CS}, and it is obvious that (4) implies (1).  Finally, (2), (3) and (5) are equivalent by Theorem \ref{CS, RF}.
\end{proof}

From Theorem \ref{CS, CS, fg} we immediately deduce:

\begin{corollary}
Let $S$ be a rectangular band and let $M=S^1.$  Then all finitely generated $M$-acts are completely separable.
\end{corollary}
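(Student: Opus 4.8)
The plan is to deduce this directly from Theorem \ref{CS, CS, fg}, which asserts that for $M=S^1$ with $S=\mathcal{M}(G;I,J;P)$, all finitely generated $M$-acts are completely separable if and only if $G$ is finite and $P$ has finite rank. So the whole task reduces to checking that a rectangular band, presented as a Rees matrix semigroup, automatically satisfies both of these conditions. No new construction is needed; the work is purely in unwinding the definitions recalled earlier in the subsection.

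First I would invoke the characterisation stated immediately before the corollary: a rectangular band is a completely simple semigroup all of whose subgroups are trivial. By Rees' Theorem it is therefore isomorphic to some $\mathcal{M}(G;I,J;P)$, and the triviality of the subgroups forces the structure group $G$ to be the trivial group $\{e\}$. In particular $G$ is finite, which disposes of the first hypothesis of Theorem \ref{CS, CS, fg} at once.

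It then remains to verify that $P$ has finite rank. Since $G=\{e\}$, every entry $p_{ji}$ equals $e$. Reading off the definition of $\sim_I$, we have $i\sim_I k$ precisely when there exists $g\in G$ with $p_{ji}=p_{jk}g$ for all $j\in J$; the only available $g$ is $e$, and the condition becomes $e=e$, which holds for every pair $i,k$. Hence $\sim_I$ is the universal relation and $r_I=1$, and dually $r_J=1$, so the rank $\max(r_I,r_J)$ equals $1$ and is finite. Both conditions of Theorem \ref{CS, CS, fg} are satisfied, and the conclusion follows. The argument has no genuine obstacle: it is a direct specialisation, and the only point meriting a word of care is that the triviality of $G$ collapses $P$ to a rank-one matrix rather than merely a finite one.
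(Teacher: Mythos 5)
Your proposal is correct and matches the paper's intended argument: the paper derives this corollary as an immediate consequence of Theorem \ref{CS, CS, fg}, exactly as you do, with the implicit verification being that a rectangular band is a Rees matrix semigroup over the trivial group (so $G$ is finite) and that the all-identity sandwich matrix has rank $1$ (so $P$ has finite rank, even when $I$ and $J$ are both infinite). Your explicit computation of $r_I=r_J=1$ is precisely the unwinding the paper leaves to the reader.
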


\vspace{0.5em}
\subsection{Clifford monoids\nopunct}
~\par\vspace{0.5em}

A {\em Clifford monoid} is an inverse monoid whose idempotents are central (i.e.\ they commute with every element).  Clifford monoids can be nicely described in terms of semilattices and groups.  Before stating this structure theorem, we first recall the following definition.\par
Let $Y$ be a semilattice, and let $M$ be a monoid that is a disjoint union of subsemigroups $S_{\alpha}, \alpha\in Y,$ with the property that $S_{\alpha}S_{\beta}\subseteq S_{\alpha\beta}$ for all $\alpha, \beta\in Y.$  Then we say that $M$ is a {\em semilattice of semigroups} $S_{\alpha}$ $(\alpha\in Y)$ and write $M=\mathcal{S}(Y, S_{\alpha}).$  We note that $Y$ has an identity, i.e.\ it is a commutative idempotent monoid.

\begin{thm}[{\cite[Theorem 4.2.1]{Howie}}]
\label{Clifford characterisation}
Every Clifford monoid is a semilattice of groups.
\end{thm}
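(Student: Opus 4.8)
The plan is to realise the semilattice decomposition directly from the idempotents of $M.$  Write $Y=E(M)$ for the set of idempotents of the Clifford monoid $M.$  In any inverse monoid the idempotents commute and form a subsemigroup, so $Y$ is a semilattice (indeed a commutative idempotent monoid, as required in the definition of $\mathcal{S}(Y, S_{\alpha})$).  For each $e\in Y$ I would set $G_e=\{a\in M : aa^{-1}=e\},$ and the goal is to show that $M$ is the disjoint union of the $G_e,$ that each $G_e$ is a group with identity $e,$ and that $G_eG_f\subseteq G_{ef}$ for all $e, f\in Y.$  Establishing these three facts is exactly what is needed to conclude $M=\mathcal{S}(Y, G_e).$

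The crux, and the step I expect to be the main obstacle, is to exploit centrality of the idempotents to prove that $aa^{-1}=a^{-1}a$ for every $a\in M,$ so that each element has a single well-defined idempotent $e=aa^{-1}$ attached to it, and that this $e$ is a two-sided identity for $a.$  Using that $aa^{-1}$ is a central idempotent together with the inverse-semigroup identity $aa^{-1}a=a,$ one deduces $a=a^2a^{-1}$ (from $ea=ae$ with $e=aa^{-1}$); combining this with centrality of $a^{-1}a$ (which gives $a^{-1}a\cdot a=a$) yields $a^{-1}a^2=a$ and hence $a^{-1}a=aa^{-1}.$  The same manipulations show $ea=ae=a$ where $e=aa^{-1}.$  Once this lemma is in place, the partition $M=\bigsqcup_{e\in Y}G_e$ is immediate from the uniqueness of the idempotent $aa^{-1}.$

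It then remains to check that each $G_e$ is a group and that multiplication respects the semilattice grading, both of which follow routinely once centrality is available.  For $a, b\in G_e$ one computes $(ab)(ab)^{-1}=a(bb^{-1})a^{-1}=aea^{-1}=e(aa^{-1})=e,$ using centrality of $e,$ so $G_e$ is closed; $e$ is a two-sided identity by the previous step; and $a^{-1}\in G_e$ serves as the inverse of $a,$ so $G_e$ is a group.  For the grading, if $a\in G_e$ and $b\in G_f$ then $(ab)(ab)^{-1}=a(bb^{-1})a^{-1}=afa^{-1}=f(aa^{-1})=fe=ef,$ whence $ab\in G_{ef}$ and $G_eG_f\subseteq G_{ef}.$  This establishes $M=\mathcal{S}(Y, G_e)$ as a semilattice of groups.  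The only genuinely delicate point is the opening lemma $aa^{-1}=a^{-1}a$ together with the identity property of $e$; every subsequent verification is a short calculation that leans on the centrality hypothesis.
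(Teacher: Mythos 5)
Your proposal is correct: the key lemma $aa^{-1}=a^{-1}a$ does follow from centrality exactly as you indicate (from $a=a^2a^{-1}$ and $a=a^{-1}a^2$ one gets $a^{-1}a=a^{-1}a^2a^{-1}=aa^{-1}$), and the remaining verifications that each $G_e=\{a\in M : aa^{-1}=e\}$ is a group with identity $e$ and that $G_eG_f\subseteq G_{ef}$ are all sound. The paper itself gives no proof of this statement, citing it as \cite[Theorem 4.2.1]{Howie}, and your argument is essentially the standard one from that reference (partitioning $M$ into the maximal subgroups indexed by the semilattice of idempotents), so there is nothing further to reconcile.
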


We note that in a Clifford monoid $M=\mathcal{S}(Y, G_{\alpha}),$ the semilattice $Y$ is isomorphic to the set $E(M)$ of idempotents of $M,$ and the $G_{\alpha}$ ($\alpha\in Y$) are the maximal subgroups of $M.$\par
The following result provides some necessary conditions for a Clifford monoid to satisfy the condition that all its finitely generated acts are completely separable.

\begin{prop}
\label{prop:Clifford,CS}
Let $M$ be a Clifford monoid, and let $M=\mathcal{S}(Y, G_{\alpha})$ be its decomposition into a semilattice of groups.  If all finitely generated $M$-acts are completely separable, then every quotient of $Y$ is completely separable and, for each $\alpha\in Y,$ the group $G_{\alpha}$ is finite.
\end{prop}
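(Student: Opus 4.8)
The plan is to establish the two necessary conditions separately, in each case transferring the hypothesis on $M$ to the relevant substructure and then invoking results already proved for groups and for commutative monoids.

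I would first handle the finiteness of the maximal subgroups, following the pattern of the forward direction of Theorem \ref{CS, CS}. The act $\textbf{M}$ is cyclic, hence finitely generated, so by hypothesis it is completely separable; by Corollary \ref{monoid separability}(4) this is equivalent to $M$ being completely separable as a monoid. Each $G_\alpha$ is a maximal subgroup of $M$, so it is completely separable by Lemma \ref{subalgebra}, and a completely separable group is finite by \cite[Lemma 2.4]{Miller}. This disposes of the second conclusion.

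For the statement about quotients of $Y$, the crucial observation is that $Y$ occurs as a retract of $M$. Writing $e_\alpha$ for the identity of $G_\alpha$, I would consider the map $\phi : M \to M$ defined by $x\phi = e_\alpha$ for $x \in G_\alpha$. Since $M = \mathcal{S}(Y, G_\alpha)$ gives $xy \in G_{\alpha\beta}$ whenever $x \in G_\alpha$ and $y \in G_\beta$, and since the idempotents satisfy $e_\alpha e_\beta = e_{\alpha\beta}$, the map $\phi$ is a homomorphism onto the submonoid $E(M) = \{e_\alpha : \alpha \in Y\}$, and it fixes $E(M)$ pointwise because $e_\alpha \in G_\alpha$. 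Hence $E(M) \cong Y$ is a retract of $M$, and Proposition \ref{retract} (applied with $\mathcal{C}$ taken to be complete separability) shows that all finitely generated $Y$-acts are completely separable.

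To finish, I would use that $Y$ is commutative, so every right congruence on $Y$ is automatically a two-sided congruence and Corollary \ref{all fg acts CS} applies: the property that all finitely generated $Y$-acts are completely separable is equivalent to every quotient of $Y$ being completely separable, which is exactly the first conclusion. The one genuinely structural step, and hence the main thing to verify carefully, is that $Y$ arises as a retract of $M$; everything else is a direct assembly of Proposition \ref{retract}, Corollary \ref{all fg acts CS}, Corollary \ref{monoid separability}, Lemma \ref{subalgebra}, and the finite-group criterion from \cite{Miller}.
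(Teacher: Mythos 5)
Your proposal is correct and follows essentially the same route as the paper: the paper also obtains finiteness of each $G_\alpha$ via Corollary \ref{monoid separability}(4), Lemma \ref{subalgebra} and the fact that completely separable groups are finite, and it also realises $Y\cong E(M)$ as a retract of $M$ (via $m\mapsto mm^{-1}$, which is exactly your map $x\mapsto e_\alpha$) before invoking Proposition \ref{retract} and Corollary \ref{all fg acts CS}. Your explicit check that commutativity of $Y$ makes every right congruence two-sided, so that Corollary \ref{all fg acts CS} genuinely applies, is a detail the paper leaves implicit.
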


\begin{proof}
The semilattice $Y$ is isomorphic to the set of idempotents $E(M),$ which is a retract of $M$ via the homomorphism $M\to E(M), m\mapsto mm^{-1}.$  Therefore, by Proposition \ref{retract}, all finitely generated $Y$-acts are completely separable, and hence every quotient of $Y$ is completely separable by Corollary \ref{all fg acts CS}.\par
Since $\textbf{M}$ is completely separable, the monoid $M$ is completely separable by Corollary \ref{monoid separability}(4).  Therefore, each $G_{\alpha}$ is completely separable by Lemma \ref{subalgebra}, and hence each $G_{\alpha}$ is finite by Corollaries \ref{monoid separability}(4) and \ref{group}(3).
\end{proof}

The converse of Proposition \ref{prop:Clifford,CS} does not hold, as the following example demonstrates.

\begin{ex}
Let $Y=(\mathbb{N}, \max).$  For any $i\in Y,$ we can separate $i$ from $Y\!\setminus\!\{i\}$ by the congruence with classes $\{j\in Y : j<i\},$ $\{i\}$ and $\{j\in Y : i>j\}.$  Thus $Y$ is completely separable.  Morever, it can be easily shown that every quotient of $Y$ is either finite or isomorphic to $Y,$ and is hence completely separable.\par
For each $i\in\mathbb{N},$ let $G_i=\langle g_i\rangle\cong\mathbb{Z}_{2^i},$ and let $e_i$ denote the identity of $G_i$.  For each pair $i, j\in\mathbb{N}$ with $i\leq j,$ we have a natural embedding $\theta_{i, j} : G_i\to G_j$ given by $g_i\theta_{i, j}=g_j^{2^{j-i}}.$  For each $i\leq j\leq k,$ we have $\theta_{i, j}\circ\theta_{j, k}=\theta_{i, k}$ and $\theta_{i, i}$ is the identity map on $G_i$.
Let $M=\bigcup_{i\in\mathbb{N}}G_i$, and define a multiplication on $M$ by 
$$g_i^kg_j^l=(g_i^k\theta_{i, \max(i, j)})(g_j^l\theta_{j, \max(i, j)}).$$
It is straightforward to check that, under the above multiplication, $M$ is a commutative monoid and $M=\mathcal{S}(Y, G_i).$  We define a relation $\rho$ on $M$ by
$$g_i^k\,\rho\,g_j^l\iff g_i^k\theta_{i, \max(i, j)}=g_j^l\theta_{j, \max(i, j)}.$$
We shall prove that $\rho$ is a congruence on $M.$  Clearly $\rho$ is an equivalence relation.  Let $g_i^k\,\rho\,g_j^l$ and $g_p^q\in M.$  Letting $n=\max(i, j)$ and $m=\max(n, p),$ we have
\begin{align*}
(g_i^kg_p^q)\theta_{\max(i, p), m}&=\bigl((g_i^k\theta_{i, \max(i, p)})(g_p^q\theta_{p, \max(i, p)})\bigr)\theta_{\max(i, p), m}\\
&=\bigl((g_i^k\theta_{i, \max(i, p)})\theta_{\max(i, p), m}\bigr)\bigl((g_p^q\theta_{p, \max(i, p)})\theta_{\max(i, p), m}\bigr)\\
&=(g_i^k\theta_{i, m})(g_p^q\theta_{p, m})\\
&=\bigl((g_i^k\theta_{i, n})\theta_{n, m}\bigr)(g_p^q\theta_{p, m}).
\end{align*}
Similarly, we have that 
$$(g_j^lg_p^q)\theta_{\max(j, p), m}=\bigl((g_j^l\theta_{j, n})\theta_{n, m}\bigr)(g_p^q\theta_{p, m}).$$
Since $g_i^k\,\rho\,g_j^l,$ we have that $g_i^k\theta_{i, n}=g_j^l\theta_{j, n}.$  It follows that 
$$(g_i^kg_p^q)\theta_{\max(i, p), m}=(g_j^lg_p^q)\theta_{\max(j, p), m},$$
and hence $g_i^kg_p^q\,\rho\,g_j^lg_p^q,$ as required.
We claim that the cyclic $M$-act $A=\textbf{M}/\rho$ is not completely separable.  Observe that the $\rho$-class of $e_1$ is $\{e_i : i\in\mathbb{N}\}$ and that, for each $i\in\mathbb{N},$ the restriction of $\rho$ to $G_i$ is the equality relation.  Let $a=[e_1]_{\rho}.$  We shall prove that we cannot separate $a$ from $A\!\setminus\!\{a\}$ in a finite index congruence.  So, let $\sim$ be a congruence on $A$ with finite index $n.$  The elements of $G_n$ belong to distinct $\rho$-classes, so by the pigeonhole principle there exist $i, j\in\{0, \dots, 2^n-1\}$ with $i<j$ such that $[g_n^i]_{\rho}\sim[g_n^j]_{\rho}$.  Then 
$$a=[e_1]_\rho=[e_n]_{\rho}=[g_n^j]_{\rho}g_n^{2^n-j}\sim [g_n^i]_{\rho}g_n^{2^n-j}=[g_n^{2^n-j+i}]_{\rho}\neq a,$$
as required.
\end{ex}
\vspace{0.3em}
\begin{remark}~
\begin{enumerate}[leftmargin=*]
\item There exist semilattices that are not completely separable.  For example, let $Y=\{1, 0\}\cup\{x_i : i\in I\},$ where $I$ is an infinite set, and define a multiplication on $Y$ by 
$$1y=y1=y,\, 0y=y0=0,\, x_i^2=x_i,\, x_ix_j=0\: (y\in Y, i, j\in I, i\neq j).$$  
It is easily verified that $Y$ is a semilattice with identity $1.$  Now let $\sim$ be a congruence on $Y.$  Then there exist $i, j\in I$ with $i\neq j$ such that $x_i\sim x_j.$  It follows that $x_i=x_i^2\sim x_jx_i=0.$  Thus $0$ cannot be separated from $Y\!\setminus\!\{0\},$ so $Y$ is not completely separable.
\item It is possible for a semilattice $Y$ to be completely separable and have quotients that are not completely separable.  Indeed, every semilattice is the quotient of a free semilattice $\mathcal{F}(X)$ for some set $X,$ where $\mathcal{F}(X)$ is defined as the set of all finite subsets of $X$ under the operation of union.  Let $\mathcal{F}=\mathcal{F}(X)$ where $X$ is infinite.  By (1), there exist quotients of $\mathcal{F}$ that are not completely separable.  We claim that $\mathcal{F}$ is completely separable.  Indeed, let $S\in\mathcal{F}.$  Let $U$ denote the set of all subsets of $S.$  It is clear that $U$ is finite and $I=\mathcal{F}\!\setminus\!U$ is an ideal of $\mathcal{F}.$  Therefore, the Rees congruence $\rho_I$ on $\mathcal{F}$ has finite index and $[S]_{\rho_I}=\{S\},$ as required.
\end{enumerate}
\end{remark}

\begin{prob}
Which Clifford monoids $M$ satisfy the condition that all finitely generated $M$-acts are completely separable?
\end{prob}

The next main result of this section determines which Clifford monoids satisfy the condition that all their acts are completely separable.  In order to prove this result, we first provide the following proposition.

\begin{prop}
\label{semilattice of semigroups}
Let $M$ be a monoid that is a semilattice of semigroups $\mathcal{S}(Y, S_{\alpha}).$  Then the set 
$$A=\{x_{\alpha} : \alpha\in Y\}\cup\{0\}$$ is an $M$-act under the action given by 
$$0m=0,\hspace{1em}x_{\alpha}m=
\begin{cases}
x_{\alpha}&\emph{if }m\in S_{\beta}\emph{ where }\alpha\leq\beta,\\
0&\emph{otherwise.}
\end{cases}$$
Furthermore, if $Y$ is infinite then $A$ is not completely separable.
\end{prop}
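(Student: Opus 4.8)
The plan is to verify the two assertions separately: first that the prescribed action makes $A$ an $M$-act, and then that complete separability fails whenever $Y$ is infinite. For the act axioms, the key observation is that the product in the semilattice $Y$ is the meet for the natural order $\alpha\leq\beta\iff\alpha\beta=\alpha,$ so that $\alpha\leq\beta\gamma$ if and only if $\alpha\leq\beta$ and $\alpha\leq\gamma.$ Since the identity $1$ of $M$ lies in the top component $S_{1_Y}$ and $\alpha\leq 1_Y$ for every $\alpha,$ we get $x_{\alpha}1=x_{\alpha},$ while $01=0$ is immediate. For the compatibility law, the case $a=0$ is trivial, so I would take $a=x_{\alpha},$ $m\in S_{\beta}$ and $n\in S_{\gamma},$ noting $mn\in S_{\beta\gamma}.$ A short case analysis on whether $\alpha\leq\beta$ and whether $\alpha\leq\gamma,$ together with the meet identity above, shows that both $(x_{\alpha}m)n$ and $x_{\alpha}(mn)$ equal $x_{\alpha}$ exactly when $\alpha\leq\beta$ and $\alpha\leq\gamma,$ and equal $0$ otherwise; this establishes that $A$ is an $M$-act.

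For the failure of complete separability when $Y$ is infinite, I would show directly that the zero $0$ cannot be separated from $A\!\setminus\!\{0\}$ by any finite index congruence. Let $\sim$ be such a congruence. Since the elements $x_{\alpha}$ $(\alpha\in Y)$ are pairwise distinct and $Y$ is infinite, the pigeonhole principle yields distinct $\beta,\beta'\in Y$ with $x_{\beta}\sim x_{\beta'}.$ By antisymmetry of the semilattice order, two distinct elements cannot both lie below each other, so at least one of $\beta\not\leq\beta'$ and $\beta'\not\leq\beta$ holds; by symmetry we may assume $\beta'\not\leq\beta.$ Choosing any $m\in S_{\beta}$ (which is non-empty), the definition of the action gives $x_{\beta}m=x_{\beta}$ while $x_{\beta'}m=0,$ whence $x_{\beta}=x_{\beta}m\sim x_{\beta'}m=0.$ As $x_{\beta}\neq 0,$ the congruence $\sim$ fails to separate $0$ from $A\!\setminus\!\{0\},$ so $A$ is not completely separable.

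I expect the only real subtlety to be the book-keeping in the two case analyses: the meet identity $\alpha\leq\beta\gamma\iff(\alpha\leq\beta\text{ and }\alpha\leq\gamma)$ must be invoked carefully to match the two sides of the compatibility law in every case, and in the separation argument one must take the multiplier from the correct component $S_{\delta}$ according to the comparability of $\beta$ and $\beta'.$ An alternative to the direct separation argument would be to invoke Theorem \ref{CS condition}: a direct computation gives $[0, x_{\beta}]_M=\bigcup_{\gamma\not\geq\beta}S_{\gamma},$ and since distinct $\beta$ determine distinct principal up-sets and each $S_{\gamma}$ is non-empty, these unions are pairwise distinct, so the family $\{[0, b]_M : b\in A\}$ is infinite and complete separability fails. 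I would present the direct argument as the main line, since it matches the style of the paper's other examples, and mention the computation via Theorem \ref{CS condition} as a remark.
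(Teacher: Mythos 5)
Your proposal is correct and follows essentially the same argument as the paper: the act axioms are verified via the meet identity $\alpha\leq\beta\gamma\iff(\alpha\leq\beta\text{ and }\alpha\leq\gamma),$ and complete separability fails by finding $x_{\beta}\sim x_{\beta'}$ with $\beta'\not\leq\beta$ and multiplying by an element of $S_{\beta}$ to collapse $x_{\beta}$ onto $0.$ Your additional remark via Theorem \ref{CS condition} is a valid alternative, but the main line of reasoning coincides with the paper's proof.
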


\begin{proof}
We first prove that $A$ is an $M$-act.  Clearly $x_{\alpha}1=x_{\alpha}$ for all $\alpha\in Y.$  Now let $\alpha\in Y,$ $m\in S_{\beta}$ and $n\in S_{\gamma}.$  We need to show that $(x_{\alpha}m)n=x_{\alpha}(mn).$  Notice that $\alpha\leq\beta\gamma$ if and only if $\alpha\leq\beta$ and $\alpha\leq\gamma.$
Therefore, if $\alpha\leq\beta\gamma$ then
$$(x_{\alpha}m)n=x_{\alpha}n=x_{\alpha}=x_{\alpha}(mn).$$
Otherwise, we have $(x_{\alpha}m)n=0=x_{\alpha}(mn),$ as required.\par
Now assume that $Y$ is infinite.  Let $\sim$ be a finite index congruence on $A.$  Then there exist $\alpha, \beta\in Y$ with $\alpha\not\leq\beta$ such that $x_{\alpha}\sim x_{\beta}.$  For any $m\in S_{\beta},$ we have $$0=x_{\alpha}m\sim x_{\beta}m=x_{\beta}.$$
Thus $0$ cannot be separated from $A\!\setminus\!\{0\},$ so $A$ is not completely separable.
\end{proof}

\begin{thm}
Let $M$ be a Clifford monoid.  Then all $M$-acts are completely separable if and only if $M$ is finite.
\end{thm}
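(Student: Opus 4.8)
The plan is to prove the two directions separately, relying on the structure theorem $M=\mathcal{S}(Y, G_{\alpha})$ (Theorem \ref{Clifford characterisation}) together with the propositions already established. The reverse implication is immediate: if $M$ is finite, then every $M$-act is completely separable by Corollary \ref{finite monoid}.

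For the forward implication I would argue the contrapositive, assuming $M$ is infinite and exhibiting an $M$-act that is not completely separable. Writing $M$ as the disjoint union $\bigcup_{\alpha\in Y}G_{\alpha}$, its being infinite forces either the semilattice $Y$ to be infinite or at least one maximal subgroup $G_{\alpha}$ to be infinite. These two cases can each be dispatched by a result proved earlier. In the first case, where $Y$ is infinite, I would apply Proposition \ref{semilattice of semigroups} directly: since a Clifford monoid is in particular a semilattice of semigroups, that proposition produces an $M$-act $A=\{x_{\alpha}:\alpha\in Y\}\cup\{0\}$ which fails to be completely separable whenever $Y$ is infinite, and we are done.

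In the second case, where some $G_{\alpha}$ is infinite, I would invoke the contrapositive of Proposition \ref{prop:Clifford,CS}: that proposition asserts that if all finitely generated $M$-acts are completely separable then each $G_{\alpha}$ is finite. Hence an infinite $G_{\alpha}$ guarantees that not all finitely generated $M$-acts are completely separable, and since finitely generated acts are in particular acts, not all $M$-acts are completely separable. Combining the two cases completes the contrapositive and hence the theorem.

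Since essentially all of the technical content has already been packaged into the earlier propositions, I do not anticipate any serious obstacle. The only genuine step is the elementary observation that an infinite Clifford monoid must have either an infinite underlying semilattice or an infinite maximal subgroup, after which the appropriate earlier result applies verbatim in each case.
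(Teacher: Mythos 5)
Your proposal is correct and is essentially the paper's own argument: both directions rest on Corollary \ref{finite monoid}, Proposition \ref{prop:Clifford,CS} (finiteness of the maximal subgroups $G_{\alpha}$) and Proposition \ref{semilattice of semigroups} (finiteness of $Y$), with your contrapositive phrasing being a trivial logical rearrangement of the paper's direct deduction. The case split you describe (infinite $M$ forces $Y$ infinite or some $G_{\alpha}$ infinite) is exactly the observation implicit in the paper's final line ``We conclude that $M$ is finite.''
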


\begin{proof}
The reverse implication follows from Corollary \ref{finite monoid}, so we just need to prove the direct implication.  By Theorem \ref{Clifford characterisation}, $M$ is a semilattice of groups $\mathcal{S}(Y, G_{\alpha}).$  Each $G_{\alpha}$ is finite by Proposition \ref{prop:Clifford,CS}, and $Y$ is finite by Proposition \ref{semilattice of semigroups}.  We conclude that $M$ is finite.
\end{proof}

The following result will be useful for the remainder of the section.

\begin{prop}
\label{Clifford prop}
Let $M$ be a Clifford monoid, let $A$ be an $M$-act, and let $a, b\in A.$  If $(a, b)\notin\mathcal{R}_A,$ then there exists a finite index congruence on $A$ that separates $a$ and $b.$
\end{prop}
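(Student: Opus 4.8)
The plan is to reduce to a single clean configuration and then separate with a two-element quotient. Since $(a,b)\notin\mathcal{R}_A$ we may assume without loss of generality that $a\not\leq_A b$, i.e.\ $a\notin bM=\langle b\rangle$. First I would observe that $I=\{x\in A:a\not\leq_A x\}$ is a subact of $A$ (if $a\notin xM$ then $a\notin xmM$ for all $m$), that $b\in I$, and that $a\notin I$. Thus it suffices to separate $a$ from the whole subact $I$, and for this it is enough to find a finite-index congruence on the Rees quotient $B:=A/I$ separating the nonzero element $\bar a$ from the zero $0:=[I]$, since such a congruence pulls back along the canonical map $A\to B$. The payoff of using $I$ rather than $\langle b\rangle$ is an extra minimality property: every nonzero $\bar x\in B$ comes from some $x$ with $a\leq_A x$, so $\bar a\leq_B\bar x$; that is, $\bar a$ lies $\leq_B$-below every nonzero element. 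So I may assume $A=B$ has a zero $0$, $a\neq0$, and $a$ is below every nonzero element.

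Next I would bring in the Clifford structure. Writing $M=\mathcal{S}(Y,G_\alpha)$ via Theorem \ref{Clifford characterisation}, let $e_\alpha$ denote the identity of $G_\alpha$; recall the idempotents are central. I would record three facts: for $m\in G_\alpha$ one has $xm=0\iff xe_\alpha=0$ (as $e_\alpha=mm^{-1}$ and $m$ is a unit of $G_\alpha$); each $Ae_\alpha=\{x:xe_\alpha=x\}$ is a subact (using centrality, $(xm)e_\alpha=x(e_\alpha m)=(xe_\alpha)m$); and the set $Z=\{\alpha\in Y:ae_\alpha=0\}$ is a down-set closed under meet, so its complement $\sigma=\{\alpha:ae_\alpha\neq0\}$ is a prime filter containing the top element $1$. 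The crucial consequence of minimality is an annihilation statement: \emph{every} $\alpha\in Z$ kills \emph{all} of $A$, i.e.\ $xe_\alpha=0$ for every $x\in A$. Indeed, if $xe_\alpha\neq0$ then $\bar a\leq_A xe_\alpha$ by minimality, whence $a\in(xe_\alpha)M\subseteq Ae_\alpha$ and so $ae_\alpha=a\neq0$, contradicting $\alpha\in Z$.

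Finally I would build the separating map. Let $D=\{p,0_D\}$ be the two-element $M$-act with $0_D$ a zero and $p\cdot m=p$ when $m\in G_\alpha$ with $\alpha\in\sigma$, and $p\cdot m=0_D$ otherwise; this is a well-defined act precisely because $\sigma$ is a prime filter, so $\alpha\mapsto[\alpha\in\sigma]$ is a semilattice homomorphism $Y\to\{1,0\}$. Define $\theta:A\to D$ by $\theta(x)=p$ if $x\neq0$ and $xe_\alpha\neq0$ for all $\alpha\in\sigma$ (``$x$ is alive at every high level''), and $\theta(x)=0_D$ otherwise. Then $\theta(a)=p$ and $\theta(0)=0_D$, so $\theta$ separates $a$ from $0$; composing with $A\to A/I$ gives the required finite-index congruence on the original act. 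The technical heart, and the step I expect to be the main obstacle, is verifying that $\theta$ is an $M$-homomorphism: one must check that multiplying an ``alive-at-all-high-levels'' element by a high multiplier keeps it alive at all high levels (here I would use that $\sigma$ is closed under meet), and that any low multiplier sends everything to $0_D$ (here I would use the annihilation fact). More conceptually, the real difficulty of the proposition is that $Y$ may be infinite, so naively recording the full idempotent-annihilation pattern yields infinite index; the reduction to the $\leq_A$-minimal element is exactly what collapses the obstruction into a single prime ideal $Z$ annihilating the whole act, making a mere two-element quotient suffice.
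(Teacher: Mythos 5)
Your reduction to the Rees quotient $A/I$ with $a$ minimal, your annihilation statement, and the overall architecture are sound, and the approach is genuinely different from the paper's. But one step is wrongly justified, and it sits at the load-bearing point: the claim that $\sigma=\{\alpha\in Y: ae_\alpha\neq 0\}$ is a (prime) filter \emph{because} $Z=Y\setminus\sigma$ is a down-set closed under meet. In a meet-semilattice every down-set is automatically closed under meet, and the complement of a down-set is merely an up-set; it need not be closed under meet. For instance, if $Y=\{1,\alpha,\beta,0\}$ with $\alpha\beta=0$, then $Z=\{0\}$ is a down-set closed under meet whose complement $\{1,\alpha,\beta\}$ is not a filter. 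Meet-closure of $\sigma$ is exactly what makes $\alpha\mapsto[\alpha\in\sigma]$ a semilattice homomorphism to $\{1,0\}$, so it is needed both for your two-element act $D$ to be well defined and inside the homomorphism check for $\theta$ (you must know $xe_{\alpha\beta}\neq 0$ when $\alpha,\beta\in\sigma$). As written, then, there is a hole. Fortunately it is repaired by your own annihilation argument specialised to $x=a$: if $\alpha\in\sigma$ then $ae_\alpha\neq 0$, so by minimality $a\leq_A ae_\alpha$, giving $a\in (ae_\alpha)M\subseteq Ae_\alpha$ and hence $ae_\alpha=a$; consequently, for $\alpha,\beta\in\sigma$ we get $ae_{\alpha\beta}=(ae_\alpha)e_\beta=ae_\beta=a\neq 0$, so $\alpha\beta\in\sigma$. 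With this line inserted, the remaining verifications (that $D$ is an act, and that $\theta$ is an $M$-homomorphism, using meet-closure for multipliers from levels in $\sigma$ and annihilation for the rest) do go through.

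For comparison, the paper's proof is far shorter and uses no structure theory: it takes the same subact $B=\{x\in A: a\not\leq_A x\}$ and proves directly that the two-class partition with classes $A\setminus B$ and $B$ is a congruence. The whole computation is: if $a\leq_A x$, $a\leq_A y$, say $a=yp$, and $a=(xm)q$, choose $u$ with $mq=(mq)u(mq)$; then $e=mqu$ is a central idempotent and $(ym)(qup)=yep=ype=x(mq)e=xe(mq)=x(mq)=a$, so $a\leq_A ym$. This yields an index-2 congruence separating $a$ from $b$ in a few lines, using only regularity and centrality of idempotents, with no semilattice decomposition and no Rees quotient. It is also worth noting that your congruence, once repaired, coincides with the paper's: minimality forces $\theta(\bar x)=p$ for every $x\notin I$, so the pullback of $\ker\theta$ to $A$ is precisely the partition into $A\setminus I$ and $I$. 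Your route through $\mathcal{S}(Y,G_\alpha)$, the filter $\sigma$ and the two-element act thus reaches the same index-2 congruence by heavier machinery; what it buys is an explicit explanation, in terms of which levels of the semilattice annihilate the act, of why that partition is compatible with the action.
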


\begin{proof}
Let $\leq$ denote the preorder $\leq_A$.  We may assume without loss of generality that $a\not\leq b.$  Let $B=\{x\in A : a\not\leq x\}.$  Then $B$ is a subact of $A.$  Indeed, if $B$ were not a subact, then there would exist $x\in B$ and $m\in M$ such that $a\leq xm,$ but then would exist $n\in M$ such that $a=(xm)n=x(mn),$ contradicting that $a\not\leq x.$\par
We now define a relation $\sim$ on $A$ by 
$$x\sim y\iff x, y\in A\!\setminus\!B\text{ or }x, y\in B.$$
We claim that $\sim$ is a congruence on $A.$  Clearly $\sim$ is an equivalence relation.  Now let $x\sim y$ and $m\in M.$  If $x, y\in B,$ then $xm, ym\in B$ since $B$ is a subact of $A.$  Assume then that $x, y\in A\!\setminus\!B.$  We just need to prove that $xm\in A\!\setminus\!B$ if and only if $ym\in A\!\setminus\!B$; that is, $a\leq xm$ if and only if $a\leq ym.$  Suppose that $a\leq xm.$  Then there exist $p, q\in M$ such that $a=yp=(xm)q.$  Since $M$ is regular, there exists $u\in M$ such that $mq=(mq)u(mq).$  Letting $e$ denote the idempotent $mqu,$ and using the fact that idempotents of $M$ are central, we deduce that
$$(ym)(qup)=y(mqup)=yep=ype=x(mq)e=xe(mq)=x(mq)=a,$$
so $a\leq ym.$  By symmetry, if $a\leq ym$ then $a\leq xm.$  Thus $\sim$ is a congruence.  Clearly $\sim$ has index 2.  Finally, we have $a\in A\!\setminus\!B$ and $b\in B,$ so $\sim$ separates $a$ and $b,$ as required.
\end{proof}

We can now readily prove that all acts over a Clifford monoid are strongly subact separable.

\begin{thm}
\label{Clifford SSS}
Let $M$ be a Clifford monoid.  Then all $M$-acts are strongly subact separable.
\end{thm}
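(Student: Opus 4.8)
The plan is to combine Proposition \ref{all acts SSS} with Proposition \ref{Clifford prop}, which together do essentially all the work. By the equivalence of conditions (1) and (5) in Proposition \ref{all acts SSS}, it suffices to show that for any $M$-act $A$ containing zeroes, any zero $0\in A$ and any $a\in A\!\setminus\!\{0\}$, there exist a finite $M$-act $C$ and an $M$-homomorphism $\theta : A\to C$ such that $a\theta\neq 0\theta$. So I would fix such an $A$, such a zero $0$, and such an element $a$.

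The key observation is that $a$ and $0$ are necessarily $\mathcal{R}_A$-inequivalent. Indeed, since $0$ is a zero we have $0m=0$ for all $m\in M$, so $\langle 0\rangle=\{0\}$. If we had $(a, 0)\in\mathcal{R}_A$, then $\langle a\rangle=\langle 0\rangle=\{0\}$; but $a=a1\in\langle a\rangle$, forcing $a=0$, which contradicts the choice of $a$. Hence $(a, 0)\notin\mathcal{R}_A$.

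Having established this $\mathcal{R}_A$-incomparability, I would apply Proposition \ref{Clifford prop} to obtain a finite index congruence $\sim$ on $A$ that separates $a$ and $0$. The quotient $A/{\sim}$ is then a finite $M$-act, and the canonical quotient map $A\to A/{\sim}$ is an $M$-homomorphism sending $a$ and $0$ to distinct classes. This verifies condition (5), and the theorem follows. I do not anticipate any genuine obstacle here: the substantive step—namely manufacturing a (finite index) separating congruence out of an $\mathcal{R}_A$-incomparability, using the regularity of $M$ together with the centrality of its idempotents—has already been carried out in Proposition \ref{Clifford prop}, so the present statement reduces to the short reduction described above.
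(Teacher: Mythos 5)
Your proposal is correct and follows exactly the paper's own argument: reduce via Proposition \ref{all acts SSS}(5) to separating an element $a$ from a zero $0$, observe that $\langle 0\rangle=\{0\}$ forces $(a,0)\notin\mathcal{R}_A$, and apply Proposition \ref{Clifford prop}. Your write-up simply spells out the $\mathcal{R}_A$-inequivalence step in slightly more detail.
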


\begin{proof}
Let $A$ be an $M$-act with a zero $0,$ and consider an element $a\in A\!\setminus\!\{0\}.$  By Proposition \ref{all acts SSS}, it suffices to prove that we can separate $a$ and $0.$
Clearly $(a, 0)\notin\mathcal{R}_A$ since $\langle 0\rangle=\{0\}.$  Therefore, by Proposition \ref{Clifford prop} there exists a finite index congruence on $A$ that separates $a$ and $0,$ as required.
\end{proof}

\begin{corollary}
Let $M$ be a commutative idempotent monoid.  Then all $M$-acts are residually finite and strongly subact separable.
\end{corollary}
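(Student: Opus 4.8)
The plan is to derive both conclusions directly from results already established. The first step is to recognise that a commutative idempotent monoid $M$ is a Clifford monoid. Indeed, $M$ is commutative and every element is idempotent, so every element is central; moreover each $m\in M$ is its own inverse, since $m\cdot m\cdot m=m^2=m$, so $M$ is an inverse monoid. Equivalently, $M$ is simply a semilattice, regarded as a Clifford monoid all of whose maximal subgroups are trivial, which is exactly the situation referred to when one writes a semilattice $Y$ as a commutative idempotent monoid.

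Granting this, strong subact separability of every $M$-act is immediate from Theorem \ref{Clifford SSS}, which delivers precisely this conclusion for all Clifford monoids. This settles the second assertion at once.

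For residual finiteness I would compose three earlier facts. By Proposition \ref{algebra separability}(2), each strongly subact separable $M$-act is weakly subact separable, so all $M$-acts are weakly subact separable. Next, Remark \ref{Green remark} supplies the structural input specific to this setting: when $M$ is commutative and idempotent, Green's relation $\mathcal{R}_A$ is the equality relation on every $M$-act $A$ (equivalently, $\leq_A$ is antisymmetric). Finally, Lemma \ref{Green} asserts that a weakly subact separable act on which $\mathcal{R}_A$ is equality is residually finite. Applying Lemma \ref{Green} to each $M$-act $A$ then yields that all $M$-acts are residually finite.

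I do not anticipate any genuine obstacle: the argument is a short chain through Theorem \ref{Clifford SSS}, Proposition \ref{algebra separability}(2), Remark \ref{Green remark} and Lemma \ref{Green}. The only point meriting a line of verification is the opening observation that a commutative idempotent monoid satisfies the Clifford hypotheses, after which both conclusions follow automatically.
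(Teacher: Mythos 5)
Your proposal is correct and follows essentially the same route as the paper: the paper's proof also observes that $M$ is a Clifford monoid, invokes Theorem \ref{Clifford SSS} for strong subact separability, and then deduces residual finiteness from Lemma \ref{Green} together with Remark \ref{Green remark}. The only difference is that you spell out the intermediate steps (the verification that a commutative idempotent monoid is Clifford, and the passage from strong to weak subact separability via Proposition \ref{algebra separability}(2)) which the paper leaves implicit.
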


\begin{proof}
Since $M$ is a Clifford monoid, all $M$-acts are strongly subact separable by Theorem \ref{Clifford SSS}.  It then follows from Lemma \ref{Green} and Remark \ref{Green remark} that all $M$-acts are residually finite.
\end{proof}

\begin{remark}
Kozhukhov proved that a non-trivial monoid $M$ has the property that every $M$-act is a subdirect product of two element acts if and only if $M$ is a commutative idempotent monoid \cite[Theorem 2]{Kozhukhov1}.  It can be easily deduced from this result that every act over a commutative idempotent monoid is residually finite.
\end{remark}

In the remainder of this section we consider Clifford monoids whose (finitely generated) acts are all residually finite.  The following result states that their maximal subgroups are strongly subgroup separable.

\begin{prop}
\label{max subgroups}
Let $M$ be a Clifford monoid.  If all finitely generated $M$-acts are residually finite, then every maximal subgroup of $M$ is strongly subgroup separable.
\end{prop}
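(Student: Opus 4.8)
The plan is to reduce the statement to residual finiteness of cyclic acts over the maximal subgroups and then apply the characterisation in Theorem \ref{all G-acts RF}. By that theorem a group $G$ is strongly subgroup separable precisely when all its finitely generated (equivalently, all its cyclic) acts are residually finite. So, writing $M=\mathcal{S}(Y,G_\alpha)$ for the decomposition of the Clifford monoid $M$ into a semilattice of groups (Theorem \ref{Clifford characterisation}), it suffices to show that, for each $\alpha\in Y$, all finitely generated $G_\alpha$-acts are residually finite.

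Fix $\alpha\in Y$ and let $e$ denote the identity of $G_\alpha$; this is an idempotent of $M$, and since $M$ is Clifford its idempotents are central. First I would isolate the ``$\le\alpha$'' part of $M$: the map $\phi\colon M\to eM$, $m\mapsto em$, is a homomorphism (using centrality of $e$, one computes $(em)(en)=emen=eemn=e(mn)$) which fixes $eM$ pointwise and sends $1_M$ to $e$, so $eM$ is a monoid with identity $e$ onto which $M$ retracts. I would then invoke Proposition \ref{retract} to conclude that all finitely generated $eM$-acts are residually finite. The second step is to carve $G_\alpha$ out of $eM$: since $eM=\bigcup_{\gamma\le\alpha}G_\gamma$, the subgroup $G_\alpha$ is exactly the group of units of $eM$, and its complement $eM\setminus G_\alpha=\bigcup_{\gamma<\alpha}G_\gamma$ is an ideal of $eM$ (a product having a factor in some $G_\gamma$ with $\gamma<\alpha$ lands in some $G_{\gamma\wedge\delta}$ with $\gamma\wedge\delta<\alpha$). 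Thus $G_\alpha$ is a submonoid of $eM$ whose complement is an ideal, and Proposition \ref{submonoid} yields that all finitely generated $G_\alpha$-acts are residually finite, as required.

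The point that needs care — and the main obstacle — is that $eM$ has identity $e\neq 1_M$, so it is not a submonoid of $M$ in the strict sense and Proposition \ref{retract} is not being applied to the letter of its statement. I would address this by noting that the proof of Proposition \ref{retract} uses only that $\phi$ is a monoid homomorphism onto a monoid $N$ acting through its own identity $1_N=1_M\phi$ — it never uses $1_M\in N$ — so the argument goes through verbatim with $N=eM$. Should one prefer to avoid this subtlety, the same conclusion follows from a direct construction: given a cyclic $G_\alpha$-act $B\cong G_\alpha/H$, form the finitely generated $M$-act $A=B\cup\{0\}$ in which $b\cdot m=b(em)$ whenever $em\in G_\alpha$ and $b\cdot m=0$ otherwise; residual finiteness of $A$ then transfers to $B$ by restricting a separating $M$-homomorphism $\theta\colon A\to C$ to the finite $G_\alpha$-act $Ce$ of $e$-fixed points, using that each $b\in B$ satisfies $b=b\cdot e$ and hence $b\theta\in Ce$. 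In either route the only routine verifications are the associativity of the constructed action and the check that the restricted map is a $G_\alpha$-homomorphism separating the chosen pair.
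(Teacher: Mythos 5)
Your argument is correct, and it is essentially the paper's proof run in mirror image. The paper first applies Proposition \ref{submonoid} to pass from $M$ to the up-set $N=\bigcup_{\beta\,:\,\alpha\leq\beta}G_{\beta}$ (a genuine submonoid, containing $1_M$, with ideal complement), and then applies Proposition \ref{retract} to the retraction $N\to G_{\alpha},$ $n\mapsto n1_{\alpha}$; you perform the two reductions in the opposite order, first retracting $M$ onto the down-set $eM=\bigcup_{\gamma\,:\,\gamma\leq\alpha}G_{\gamma}$ via $m\mapsto em$ and then extracting $G_{\alpha}$ from $eM$ as its group of units, whose complement is an ideal of $eM.$ Both routes finish with Theorem \ref{all G-acts RF}. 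Two remarks on the subtlety you raise. First, it is genuine but it afflicts the paper's proof equally: there $G_{\alpha}$ is likewise not a submonoid of $N$ in the strict sense, since $1_{G_{\alpha}}=1_{\alpha}\neq 1_{N}=1_{M},$ so the paper is also applying Proposition \ref{retract} beyond the letter of its statement. Second, your claim that the proof of Proposition \ref{retract} goes through \emph{verbatim} for $N=eM$ is not quite accurate: the step of that proof in which the finite $M$-act $C$ is made into an $N$-act ``via the restriction of the action'' breaks down when $1_{N}\neq 1_{M},$ because $c\cdot 1_{N}=c$ can fail (take $M=\{1,e\}$ with $e^{2}=e,$ take $C=\textbf{M}$ and $N=\{e\}$: then $1\cdot e=e\neq 1$). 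The repair is exactly the device in your fallback construction: every element $x$ of the induced $M$-act satisfies $x=x\cdot e,$ so any separating $M$-homomorphism takes values in $Ce,$ which \emph{is} a finite unital $N$-act, and one corestricts to it. With that fix in place (and the same fix is what legitimises the paper's own application of Proposition \ref{retract}), both your first route and your direct construction with $G_{\alpha}/H\cup\{0\}$ are complete proofs.
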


\begin{proof}
Let $M=\mathcal{S}(Y, G_{\alpha})$ be the decomposition of $M$ into a semilattice of groups.  Let $\alpha\in Y.$  We need to show that $G_{\alpha}$ is strongly subgroup separable.  Let $N=\bigcup_{\beta\in Y^{\prime}}G_{\beta},$ where $Y^{\prime}=\{\beta\in Y : \alpha\leq\beta\}.$  Then $N$ is a submonoid of $M$ and $M\!\setminus\! N$ is an ideal.  The group $G_{\alpha}$ is easily seen to be a retract of $N$ via the homomorphism $N\to G_{\alpha}, n\mapsto n1_{\alpha}.$  Therefore, by Propositions \ref{submonoid} and \ref{retract}, all finitely generated $G_{\alpha}$-acts are residually finite.  It now follows from Theorem \ref{all G-acts RF} that $G_{\alpha}$ is strongly subgroup separable.
\end{proof}

The following questions remain open.

\begin{prob}
\label{Clifford op}
Let $M$ be a Clifford monoid.  
\begin{enumerate}[leftmargin=*]
\item If every maximal subgroup of $M$ is strongly subgroup separable, are all $M$-acts residually finite?
\item If every maximal subgroup of $M$ is strongly subgroup separable, are all finitely generated $M$-acts residually finite?
\item Are all $M$-acts residually finite if and only if all finitely generated $M$-acts are residually finite?
\end{enumerate}
\end{prob}

\begin{remark}
\label{op remark}
It is obvious that a positive solution to (1) of Open Problem \ref{Clifford op} would imply a positive solution to (2).  Note that, given Proposition \ref{max subgroups}, a positive solution to (1) would also imply a positive solution to (3).
\end{remark}

The following result provides a partial solution to Open Problem \ref{Clifford op}.

\begin{thm}
Let $M$ be a Clifford monoid, and let $M=\mathcal{S}(Y, G_{\alpha})$ be its decomposition into a semilattice of groups.  If every subsemilattice of $Y$ has a least element, then the following are equivalent:
\begin{enumerate}
\item all $M$-acts are residually finite;
\item all finitely generated $M$-acts are residually finite;
\item for each $\alpha\in Y,$ the group $G_{\alpha}$ is strongly subgroup separable.
\end{enumerate}
\end{thm}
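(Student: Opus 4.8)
The implication $(1)\Rightarrow(2)$ is trivial and $(2)\Rightarrow(3)$ is exactly Proposition~\ref{max subgroups}, so the substance is the reverse implication $(3)\Rightarrow(1)$: assuming every $G_\alpha$ is strongly subgroup separable, I want every $M$-act $A$ to be residually finite. Given distinct $a,b\in A$, Proposition~\ref{Clifford prop} already separates them by a finite index congruence whenever $(a,b)\notin\mathcal{R}_A$, so the plan is to reduce to the case $\langle a\rangle=\langle b\rangle$ and treat it by hand. Throughout I write $M=\mathcal{S}(Y,G_\alpha)$ (Theorem~\ref{Clifford characterisation}), let $e_\alpha$ denote the identity of $G_\alpha$, and use that the idempotents are central with $e_\alpha e_\beta=e_{\alpha\beta}$.

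The first key step is to exploit the hypothesis on $Y$ to attach a canonical idempotent to each element. For $x\in A$ the set $S_x=\{\gamma\in Y: x e_\gamma=x\}$ contains $1$, is closed under the meet of $Y$, and is an up-set, hence a subsemilattice; by hypothesis it has a least element $e_{\tau(x)}$, the smallest idempotent fixing $x$ (this is the only place the ``least element'' hypothesis is used, and it is essential). A short computation gives $a=xm\Rightarrow\tau(x)\ge\tau(a)$, so from $\langle a\rangle=\langle b\rangle$ (i.e.\ $a=bn$ and $b=am$) I obtain $\tau(a)=\tau(b)=:\alpha$. Writing $e=e_\alpha$, both $a$ and $b$ lie in the subact $Ae=\{y\in A: ye=y\}$, which I regard as an act over the local monoid $N=eMe=\mathcal{S}(Y_{\le\alpha},G_\delta)$, a Clifford monoid with top group $G_\alpha$. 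Since $a\le_A b$, both $a$ and $b$ lie in the up-set $W=\{y\in Ae: a\le_A y\}$, which is closed under right multiplication by $G_\alpha$ and is therefore a $G_\alpha$-act. As $G_\alpha$ is strongly subgroup separable, Theorem~\ref{all G-acts RF} makes $W$ a residually finite $G_\alpha$-act, so there are a finite $G_\alpha$-act $F$ and a $G_\alpha$-homomorphism $\psi:W\to F$ with $a\psi\ne b\psi$.

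The hard part is to promote $\psi$ to an $M$-homomorphism on all of $A$ into a finite act; the obstruction is that elements lying strictly above the $\mathcal{R}_A$-class of $a$ can be multiplied back into $W$, so one cannot merely collapse everything outside $W$ to a zero. The resolution rests on the observation that every element of $W$ has $N$-type exactly $\alpha$, the top of $Y_{\le\alpha}$: indeed $a\le_A y$ forces $\tau(y)\ge\alpha$, while $y\in Ae$ forces $\tau(y)\le\alpha$. Hence right multiplication of any $y\in W$ by an element of $G_\delta$ with $\delta<\alpha$ strictly lowers the type and pushes $y$ out of $W$, whereas $G_\alpha$ permutes $W$. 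I therefore form the finite $N$-act $F\cup\{0\}$ in which $G_\alpha$ acts on $F$ as given and every $G_\delta$ with $\delta<\alpha$ acts as the constant zero map, and define $\Psi:Ae\to F\cup\{0\}$ by $y\Psi=y\psi$ for $y\in W$ and $y\Psi=0$ otherwise; the case analysis above shows $\Psi$ is an $N$-homomorphism separating $a$ and $b$. Finally I lift $\Psi$ to $A$ by setting $x\theta=(xe)\Psi$ and equipping $F\cup\{0\}$ with the $M$-action $f\cdot m:=f\cdot(me)$, which is well defined since $me=eme\in N$; centrality of $e$ gives $(xm)e=(xe)m=(xe)(me)$, so that $\theta$ is a finite-image $M$-homomorphism with $a\theta=a\psi\ne b\psi=b\theta$, completing the separation. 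The base case $\alpha$ minimal in $Y$ (which exists by the hypothesis) is the same argument with $N=G_\alpha$ and no lower groups to annihilate.
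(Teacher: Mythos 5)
Your proof is correct and takes essentially the same route as the paper's: reduction to $\mathcal{R}_A$-related pairs via Proposition \ref{Clifford prop}, the least element $\alpha=\tau(a)$ of the subsemilattice of idempotents fixing $a$ (the paper's $\lambda$ and $Y^{\prime}$), the identification of $W$ (which coincides with the $\mathcal{R}_A$-class of $a$) as a residually finite $G_\alpha$-act via Theorem \ref{all G-acts RF}, and a separating map whose kernel is exactly the paper's finite-index congruence $\sim$. The only difference is presentational: where the paper verifies the congruence $x\sim y\iff(x1_{\lambda})\theta=(y1_{\lambda})\theta$ (or $x,y\in B$) directly, you package the same construction as a composite of homomorphisms through $Ae$ and the local monoid $eMe$, collapsing the complement of $W$ to a zero.
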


\begin{proof}
By Remark \ref{op remark}, we only need to prove that (3) implies (1).
So, let $A$ be an $M$-act, and let $a, b\in A$ with $a\neq b.$  If $(a, b)\notin\mathcal{R}_A,$ then by Proposition \ref{Clifford prop} there exists a finite index congruence on $A$ that separates $a$ and $b.$
Assume then that $(a, b)\in\mathcal{R}_A.$ 
Let $$Y^{\prime}=\{\alpha\in Y : a=ag\text{ for some }g\in G_{\alpha}\}.$$
If $\alpha, \beta\in Y^{\prime}$ then $a=ag=ah$ for some $g\in G_{\alpha}, h\in G_{\beta},$ and hence $a=agh,$ so $\alpha\beta\in Y^{\prime}.$  Thus $Y^{\prime}$ is a subsemilattice of $Y.$  By the assumption, $Y^{\prime}$ has a least element, say $\lambda.$  Note that if $\alpha\in Y^{\prime},$ then since $a=ag$ for some $g\in G_{\lambda},$ we have that $a1_{\alpha}=ag1_{\alpha}=ag=a.$\par
Let $R$ denote the $\mathcal{R}_A$-class of $a.$  Consider $x\geq a.$  Then $a=xg$ for some $g\in G_{\alpha}, \alpha\in Y,$ and thus $a=xg1_{\alpha}=a1_{\alpha},$ so $\alpha\in Y^{\prime}.$  It follows that 
$$a=a1_{\lambda}=(xg)1_{\lambda}=(x1_{\lambda})g\,\text{ and }\,x1_{\lambda}=x1_{\alpha}1_{\lambda}=xgg^{-1}1_{\lambda}=a(g^{-1}1_{\lambda}),$$
so $x1_{\lambda}\in R.$  Suppose further that $x\in R.$  Then $x=ah$ for some $h\in G_{\beta}, \beta\in Y.$  It follows that $$x1_{\lambda}=ah1_{\lambda}=a1_{\lambda}h=ah=x.$$
Now let $k\in G_{\lambda}.$  Then $a(hk)=xk$ and $(xk)(k^{-1}g)=x1_{\lambda}g=xg=a,$ so $xk\in R.$  Therefore, the set $R$ is a $G_{\lambda}$-act.  By the assumption that $G_{\lambda}$ is strongly subgroup separable, along with Theorem \ref{all G-acts RF}, there exist a finite $G_{\lambda}$-act $C$ and a $G_{\lambda}$-homomorphism $\theta : R\to C$ such that $a\theta\neq b\theta.$
Let $B=\{x\in A : a\not\leq x\}.$  We define a relation $\sim$ on $A$ by 
$$x\sim y\iff
\begin{cases}
x, y\in A\!\setminus\!B\text{ and }(x1_{\lambda})\theta=(y1_{\lambda})\theta,\text{\,or}\\
x, y\in B.
\end{cases}$$
We claim that $\sim$ is a congruence on $A.$  Clearly $\sim$ is an equivalence relation.  Now let $x\sim y$ and $m\in M.$  If $x, y\in B,$ then $xm, ym\in B$ since $B$ is a subact of $A.$  Assume then that $x, y\in A\!\setminus\!B$ and $(x1_{\lambda})\theta=(y1_{\lambda})\theta.$  As in the proof of Proposition \ref{Clifford prop}, we have that $xm\in A\!\setminus\!B$ if and only if $ym\in A\!\setminus\!B.$  Assume then that $xm\in A\!\setminus\!B.$  Then there exists $n\in M$ such that $(xm)n=a.$  We have that $m\in G_{\alpha}$ and $h\in G_{\beta}$ for some $\alpha, \beta\in Y.$  Since $a=a1_{\alpha\beta},$ we have $\alpha\beta\in Y^{\prime},$ and hence $$\alpha\lambda=\alpha\alpha\beta\lambda=\alpha\beta\lambda=\lambda.$$  Thus $m1_{\lambda}\in G_{\lambda}.$  It follows that
\begin{align*}
\bigl((xm)1_{\lambda}\bigr)\theta&=\bigl((x1_{\lambda})(m1_{\lambda})\bigr)\theta=\bigl((x1_{\lambda})\theta\bigr)(m1_{\lambda})=\bigl((y1_{\lambda})\theta\bigr)(m1_{\lambda})=\bigl((y1_{\lambda})(m1_{\lambda})\bigr)\theta\\&=\bigl((ym)1_{\lambda}\bigr)\theta,
\end{align*}
so $xm\sim ym.$  Thus $\sim$ is a congruence.  Clearly $\sim$ has finite index since $C$ is finite.  Finally, we have that $a, b\in A\!\setminus\!B$ and $$(a1_{\lambda})\theta=a\theta\neq b\theta=(b1_{\lambda})\theta,$$
so $\sim$ separates $a$ and $b.$  This completes the proof.
\end{proof}

\begin{corollary}
Let $M$ be a Clifford monoid with finitely many idempotents.  Then the following are equivalent:
\begin{enumerate}
\item all $M$-acts are residually finite;
\item all finitely generated $M$-acts are residually finite;
\item every maximal subgroup of $M$ is strongly subgroup separable.
\end{enumerate}
\end{corollary}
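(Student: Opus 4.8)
The plan is to obtain this corollary as a direct consequence of the preceding theorem, the only work being to verify that the theorem's hypothesis is automatically satisfied here. First I would apply Theorem \ref{Clifford characterisation} to decompose $M$ as a semilattice of groups $\mathcal{S}(Y, G_\alpha)$, and recall that the semilattice $Y$ is isomorphic to the set $E(M)$ of idempotents of $M$. Since $M$ has finitely many idempotents, $Y$ is finite. Moreover, as the $G_\alpha$ ($\alpha\in Y$) are precisely the maximal subgroups of $M$, condition (3) of the corollary coincides with condition (3) of the preceding theorem. Thus it suffices to check the theorem's hypothesis, namely that every subsemilattice of $Y$ has a least element, for then the equivalence of (1), (2) and (3) follows immediately.

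The one point requiring an argument is the elementary fact that every subsemilattice of a finite semilattice has a least element. Here I would use that the product in $Y$ is the meet with respect to the natural order $\alpha\leq\beta\iff\alpha\beta=\alpha$, so that $\alpha\beta$ is the infimum of $\alpha$ and $\beta$ (consistent with the observation, used earlier, that $\alpha\leq\beta\gamma$ if and only if $\alpha\leq\beta$ and $\alpha\leq\gamma$). Given a subsemilattice $Z\subseteq Y$, finiteness of $Y$ forces $Z$ to be finite, say $Z=\{z_1,\dots,z_n\}$, and closure of $Z$ under the operation gives $z_1z_2\cdots z_n\in Z$. By commutativity and idempotency, $(z_1\cdots z_n)z_i=z_1\cdots z_n$ for each $i$, so $z_1\cdots z_n\leq z_i$; hence $z_1\cdots z_n$ is the least element of $Z$.

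With the hypothesis verified, the preceding theorem yields the desired equivalence of the three conditions, completing the argument. I do not anticipate any genuine obstacle: the result is essentially a specialisation of the theorem, and the only additional ingredient is the observation that finiteness of $E(M)$ makes every subsemilattice of $Y$ finite and therefore equipped with a minimum, obtained as the product of all its elements.
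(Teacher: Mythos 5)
Your proposal is correct and is exactly the intended argument: the paper states this corollary without proof as an immediate specialisation of the preceding theorem, relying on precisely the observations you make, namely that $Y\cong E(M)$ is finite, that the $G_\alpha$ are the maximal subgroups, and that every (non-empty) subsemilattice of a finite semilattice has a least element, obtained as the product of all its elements. Your verification of that last elementary fact via commutativity and idempotency is accurate, so there is nothing to add.
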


\section*{Acknowledgements}
This work was developed within the activities of CEMAT (Centro de Matem{\'a}tica Computacional e Estoc{\'a}stica) and Departamento de Matem{\'a}tica da Faculdade de Ci{\^e}ncias da Universidade de Lisboa, under the projects UIDB/04621/2020 and UIDP/04621/2020, financed by Funda{\c c}{\~a}o para a Ci{\^e}ncia e a Tecnologia.\par
The author would like to thank the referee for their helpful comments and suggestions.

\vspace{1em}

\end{document}